\DeclareMathAlphabet{\mathpzc}{OT1}{pzc}{m}{it}
\theoremstyle{plain}
\newtheorem{thm}{Theorem}
\newtheorem{prop}{Proposition}
\newtheorem{lem}{Lemma}
\newtheorem{obs}{Observation}
\theoremstyle{remark}
\newtheorem{rem}{Remark}
\newtheorem{examp}{Example}
\theoremstyle{definition}
\newtheorem{defn}{Definition}
\newtheoremstyle{claimstyle}{1.5pt}{}{\normalfont}{}{\itshape}{. }{ }{\thmname{\qquad #1}\thmnumber{ #2}}
\theoremstyle{claimstyle}
\newtheorem{claim}{Claim}
\DeclareMathOperator{\co}{conv}
\DeclareMathOperator{\signature}{sign}
\DeclareMathOperator{\previous}{prev}
\DeclareMathOperator{\nextid}{next}
\newcommand{\prev}[1]{\previous(#1)}
\newcommand{\next}[1]{\nextid(#1)}
\newcommand{\conv}[1][]{%
\ifthenelse{\isempty{#1}}%
{\co}
{\co #1}
}
\renewcommand{\r}{r}
\newcommand{\bigO}{\mathcal{O}}
\newcommand{\bin}{0\backslash 1}
\newcommand{\ints}{\mathbb{Z}}
\newcommand{\kpset}{K}
\newcommand{\rhs}{b}
\newcommand{\wt}{a}
\newcommand{\ub}{u}
\newcommand{\base}{\alpha}
\newcommand{\Opt}[1][c]{\Omega(#1)}
\newcommand{\x}{\hat{x}}
\renewcommand{\ell}{\mathscr{L}}
\renewcommand{\L}{\Opt}
\renewcommand{\preceq}{\preccurlyeq}
\renewcommand{\succeq}{\succcurlyeq}
\newcommand{\kp}{\ensuremath{\kpset}}
\newcommand{\kpone}{\ensuremath{\kpset^{1}}} 
\newcommandx{\expkp}[2][1=\base,2=\rhs,usedefault]{\ensuremath{\kpset_{#1}(#2)}}
\newcommand{\kpgen}{\kp^{\mathrm{gen}}}
\newcommand{\cokp}{\conv{\kp}}
\newcommandx{\cokpone}[2][1=n,2=\rhs,usedefault]{\conv\kpone[#1][#2]}	
\newcommand{\bool}[1][]{%
\ifthenelse{\isempty{#1}}%
{\{0,1\}}
{\{0,1\}^{#1}}
}
\newcommand{\zerovec}{\ensuremath{\mathbf{0}}}
\newcommand{\onevec}[1][]{%
\ifthenelse{\isempty{#1}}%
{\mathbf{e}}
{\mathbf{e}_{#1}}
}
\newcommandx{\cube}[2][1=\base,2=\rhs,usedefault]{\mathcal{H}(#2;#1)}
\newcommandx{\rect}[2][1=n,2=\ub,usedefault]{\mathcal{H}_{#1}(#2)}
\newcommandx{\sign}[3][1=\base,2=\rhs,3=,usedefault]{%
\ifthenelse{\isempty{#3}}
{\signature(#2;#1)}
{\signature_{#3}(#2;#1)}
}
\newcommand{\myqed}{\ensuremath{\diamond}}
\renewcommand{\i}{i^{*}}
\newcommand{\blue}[1]{#1}
\newcommand{\green}[1]{#1}
\begin{document}

\title{Convex hulls of superincreasing knapsacks and lexicographic orderings}
\author{Akshay Gupte}
\address{Department of Mathematical Sciences, Clemson University} 
\email{agupte@clemson.edu}

\begin{abstract}
We consider bounded integer knapsacks where the weights and variable upper bounds together form a superincreasing sequence. The elements of this superincreasing knapsack are exactly those vectors that are lexicographically smaller than the greedy solution to optimizing over this knapsack. We describe the convex hull of this $n$-dimensional set with $\bigO(n)$ facets. We also establish a distributive property by proving that the convex hull of $\le$- and $\ge$-type superincreasing knapsacks can be obtained by intersecting the convex hulls of $\le$- and $\ge$-sets taken individually. Our proofs generalize existing results for the $\bin$ case.
\end{abstract}

\newcommand{\sep}{, }
\keywords{
superincreasing sequence  \sep lexicographic ordering \sep greedy solution knapsack \sep convex hull \sep linear time complexity
}

\maketitle


\section{Introduction}\label{sec:intro}
Given positive integers $n,\rhs$, and $(\wt_{i},\ub_{i})$ for all $i \in N :=\{1,\ldots,n\}$, we consider a bounded integer knapsack defined as $\kp := \{x \in \ints^{n}_{+} \colon \wt^{\top} x \,\le\, \rhs, \, 0 \le x_{i} \le \ub_{i} \ i=1,\dots,n \}$. 
Without loss of generality (w.o.l.o.g.) we assume that $\wt_{i}\ub_{i} \le \rhs \ \forall i\in N$ and that $\wt^{\top}\ub > \rhs$ to ensure a nontrivial set. When all the upper bounds are equal to one, we have the $\bin$ knapsack $\kpone$. The convex hull of $\kp$, denoted by $\cokp$, is referred to as the knapsack polytope. 


The study of the knapsack polytope has received considerable attention in literature and in general, there may exist exponentially many facet-defining inequalities. There exist special classes of $\kp$ for which a complete description of $\cokp$ is known. For the $\bin$ knapsack, these results include the minimal covers of \citet{wolsey1975faces} assuming certain matroidal properties for $\wt^{\top}x \le \rhs$, $(1,k)$-configurations of \citet{padberg19801}, weight-reduction principle of \citet{weismantel19970} when $\wt_{i}\in\{1, \lfloor \rhs/3 \rfloor + 1, \lfloor \rhs/3 \rfloor + 2, \dots, \lfloor \rhs/2 \rfloor \} \ \forall i$ or $\wt_{i}\in\{1, \lfloor \rhs/2 \rfloor + 1, \lfloor \rhs/2 \rfloor + 2, \dots, \rhs \}\ \forall i$, and \citet{weismantel1996hilbert} when $\wt_{i} \in \{\acute{\wt},\breve{\wt}\} \ \forall i$ and for two distinct positive integers $\acute{\wt}$ and $\breve{\wt}$. There also exist complete descriptions of $\cokp$ for upper bounds not equal to 1. For a \emph{divisible} knapsack, i.e. when $\wt_{i-1}\, |\, \wt_{i}$ for all $i\ge 2$, three results are known: (i) \citet{marcotte1985cutting} when $\kp = \{x \in \ints^{n}_{+} \colon \wt^{\top} x \le \rhs \}$, (ii) \citet{pochet1995integer} when $\kp = \{x \in \ints^{n}_{+} \colon \wt^{\top} x \ge \rhs \}$, and (iii) \citet{pochet1998sequential} when $\kp = \{x \in \ints^{n}_{+} \colon \wt^{\top} x \le \rhs, \zerovec \le x \le \ub \}$. Recently, \citet{Cacchiani201374} described the convex hulls of $\le$- and $\ge$-type knapsacks with a generalized upper bound constraint $\sum_{i\in N}x_{i}\le 2$. The polytopes in \citep{pochet1995integer,pochet1998sequential} involve an exponential number of valid inequalities, whereas the polytopes in \citep{Cacchiani201374,marcotte1985cutting} have $\bigO(n)$ facets. 



In this paper, we are interested in the convex hull of a special class of $\kp$ characterized as follows.




\begin{defn}[Superincreasing knapsack]\label{def:superinc}
The set $\kp$ is said to be a \emph{superincreasing knapsack} if $\{(\wt_{i},\ub_{i})\}_{i\in N}$ forms a weakly superincreasing sequence of tuples, i.e. $\sum_{k=1}^{i}\wt_{k}\ub_{k}  \le  \wt_{i+1} \ \forall i \ge 1$.
\end{defn}

$\bin$ superincreasing knapsacks have been historically used in cryptographic systems \citep{merkle1978hiding,odlyzko1990rise}; their structure and linear time complexity were  investigated \blue{in \citep{shamir1984polynomial}}. \citet{atkinson1990sums} obtained a wider class of $\bin$ knapsacks with linear time complexity. \green{From the viewpoint of a polyhedral study}, the knapsack polytope for superincreasing $\kpone$ was first studied by \citet{laurent1992characterization}, whose result is paraphrased below. 

\begin{thm}[\citep{laurent1992characterization}]\label{thm:laurent}
For any positive integer $\rhs$, the $\bin$ knapsack polytope $\cokpone$ is completely described by its minimal cover inequalities if and only if $\{(\wt_{i},1)\}_{i\in N}$ forms a weakly superincreasing sequence. Furthermore, all the $\bigO(n)$ minimal covers can be explicitly enumerated.
\end{thm}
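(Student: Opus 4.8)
The plan is to prove the two implications separately, using as a common engine the fact that superincreasingness turns the knapsack into a lexicographic set. Throughout I assume w.o.l.o.g. that $a_1\le\cdots\le a_n$, write $\sigma_i=\sum_{k=1}^{i}a_k$ so that the hypothesis reads $\sigma_i\le a_{i+1}$, and let $\hat{x}$ denote the greedy (signature) solution obtained by scanning $i=n,\dots,1$ and setting $\hat{x}_i=1$ exactly when $a_i$ still fits in the residual budget. The first step is to show that, under superincreasingness, $\kpone=\{x\in\bool[n]:x\preceq\hat{x}\}$, where $\preceq$ is the lexicographic order with coordinate $n$ most significant: if $x$ and $\hat{x}$ first disagree at a position $t$ with $x_t<\hat{x}_t$, then $a^\top\hat{x}-a^\top x\ge a_t-\sigma_{t-1}\ge0$, so every lexicographically smaller $0/1$ vector is feasible, while $\hat{x}$ is the lexicographically largest feasible vector by construction.

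The second step pins down the minimal covers exactly. For each $t$ with $\hat{x}_t=0$ set $C_t=\{t\}\cup\{k>t:\hat{x}_k=1\}$, and I claim these are precisely the minimal covers, which gives the explicit list of at most $n$ of them. That each $C_t$ is a minimal cover is direct: $\hat{x}_t=0$ means $a_t$ exceeded the residual budget at step $t$, so $a_t+\sum_{k>t,\,\hat{x}_k=1}a_k>b$, while deleting the smallest element $t$ leaves $\sum_{k>t,\,\hat{x}_k=1}a_k\le a^\top\hat{x}\le b$ and deleting any larger element removes at least as much. Conversely, if $C=\{i_1<\cdots<i_m\}$ is a minimal cover with $t=i_1$, then $\chi_{C\setminus\{t\}}$ is feasible and $\chi_C$ is infeasible, hence $\chi_{C\setminus\{t\}}\preceq\hat{x}\prec\chi_C$; since these two indicators agree on all coordinates above $t$, the lexicographic interval property forces $\hat{x}$ to agree with them there and to satisfy $\hat{x}_t=0$, whence $C=C_t$. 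This settles the enumeration and the $\bigO(n)$ count.

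The heart of the sufficiency $(\Leftarrow)$ is to show that $P:=\{x\in[0,1]^n:\sum_{i\in C_t}x_i\le|C_t|-1\ \text{for all }t\text{ with }\hat{x}_t=0\}$ equals $\conv(\kpone)$. Validity gives $\conv(\kpone)\subseteq P$, and a short argument shows the integer points of $P$ are exactly $\kpone$: an infeasible $0/1$ vector has a highest coordinate $t$ at which it beats $\hat{x}$, and then it fills all of $C_t$ and so violates that cover inequality. It remains to prove that $P$ is integral, which I expect to be the main obstacle. My plan is a unimodular change of variables: put $y_k=1-x_k$ for every $k$ with $\hat{x}_k=1$ and keep $x_t$ for every $t$ with $\hat{x}_t=0$. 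The cover inequality for $C_t$ then becomes $x_t\le\sum_{k>t,\,\hat{x}_k=1}y_k$, and the system of these inequalities together with the box constraints has a coefficient matrix of the form $[\,I\mid-N\,]$, where the $t$-th row of the $0/1$ matrix $N$ is the indicator of the suffix $\{k:\hat{x}_k=1,\ k>t\}$. Since $N$ has the consecutive-ones property it is an interval matrix and hence totally unimodular, a property preserved by negation and by adjoining the identity block and the bound rows; as the right-hand sides are integral, $P$ is integral, and since the reflection is unimodular this transfers back to the original coordinates, giving $\conv(\kpone)=P$.

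For necessity I argue the contrapositive, producing, whenever superincreasingness fails, a right-hand side for which the minimal covers are insufficient. Pick $j$ with $\sigma_j>a_{j+1}$, set $b=\sum_{i=j+1}^{n}a_i$, and note this respects the standing assumptions $a_i\le b$ and $\sigma_n>b$. Take $x^*$ with $x^*_i=1$ for $i>j+1$ and $x^*_i=\tfrac12$ for $i\le j+1$. A direct computation gives $a^\top x^*=\sigma_n-\tfrac12\sigma_{j+1}>\sigma_n-\sigma_j=b$, so $x^*\notin\conv(\kpone)$. On the other hand every minimal cover $C$ must meet $\{1,\dots,j+1\}$ in at least two indices: no cover lies inside $\{k:k>j+1\}$ because $\sum_{i>j+1}a_i=b$, and a cover meeting $\{1,\dots,j+1\}$ in a single index $s$ would force $a_s>a_{j+1}$, which is impossible since $a_s\le a_{j+1}$. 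Consequently $\sum_{i\in C}x^*_i\le|C|-1$ holds for every minimal cover, so $x^*$ satisfies all minimal cover inequalities and the bounds yet lies outside $\conv(\kpone)$, showing that the minimal covers fail to describe the polytope for this $b$. The only delicate points are these two case checks and verifying that the chosen $b$ meets the nontriviality assumptions; everything else is bookkeeping.
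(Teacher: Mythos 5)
Your proof is correct, but be aware that the paper itself does not prove Theorem~\ref{thm:laurent}: the statement is imported from \citet{laurent1992characterization}, and the paper only recovers the sufficiency direction implicitly, as the $\ub=\onevec$ special case of Theorem~\ref{thm:conv}, whose convex-hull proof runs through the dynamic program of Proposition~\ref{prop:dp} and an optimal-face argument rather than total unimodularity. Your route is therefore genuinely different from the machinery the paper develops, although it matches what the introduction merely alludes to (``the coefficient matrix for system of minimal covers is an interval matrix and hence totally unimodular''). Concretely, you first prove the lexicographic characterization $\kpone=\{x\in\bool[n]\colon x\preceq\hat{x}\}$, which is the $\ub=\onevec$ case of Proposition~\ref{prop:lexord2}; you then identify the minimal covers as $\{t\}\cup\Ij[t]$ for $t\notin\I$, exactly the family the paper extracts from Theorems~2.4--2.5 of \citet{laurent1992characterization} in \textsection\ref{sec:conv}; and you obtain integrality by complementing the variables indexed by $\I$, so that the cover rows become prefix indicators of $\I$ and the system is $[I\mid -N]$ with $N$ an interval matrix. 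This buys a short, self-contained argument for the $\bin$ case, but it is exactly the step that fails to generalize to $\ub\neq\onevec$, which is why the paper needs the heavier proof of Theorem~\ref{thm:conv}. The necessity direction is not treated anywhere in the paper, so your fractional point $x^{*}$ is a genuine addition, and the construction is sound: every cover must meet $\{1,\dots,j+1\}$ in at least two indices, each contributing only $1/2$ to the left-hand side. Two harmless slips: $\sum_{i>j+1}\wt_{i}=\rhs-\wt_{j+1}$, not $\rhs$ (the conclusion that no cover is contained in $\{k\colon k>j+1\}$ still holds, a fortiori), and you should state explicitly that ``described by its minimal cover inequalities'' is understood to include the bounds $\zerovec\le x\le\onevec$, which $x^{*}$ satisfies.
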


\newcommand{\explen}[1][]{\lfloor \log_{\base}{\ub_{#1}}\rfloor}
We extend the sufficiency condition of Theorem~\ref{thm:laurent} to the case when the variable upper bounds in $\kp$ are not necessarily equal to one. We explicitly describe the convex hull  with $\bigO(n)$ nontrivial facets. As is to be expected, the proposed inequalities reduce to minimal covers of $\kpone$ when $u$ is a vector of ones. The convex hull proof for $\kpone$ is simpler since the coefficient matrix for system of minimal covers is an interval matrix and hence totally unimodular.
Since \citet{marcotte1985cutting} describes the convex hull of divisible $\kp$ using $\bigO(n)$ facets, it is obvious that the superincreasing property is not a necessary condition for $\cokp$ to have $\bigO(n)$ facets. 
Besides generalizing the result of \citeauthor{laurent1992characterization}, another motivation for studying superincreasing knapsacks is that such sets appear after reformulating the integer variables in a mixed integer program; see \citet{siampaper}. The most common example of such reformulations is the set 
\begin{equation}\label{expkp}
\expkp := \left\{\zeta\colon  \sum_{t} \base^{t-1}\zeta_{t} \le \ub,\; \zeta_{t}\in\{0,1,\ldots,\base-1\} \ \: \forall t=1,\ldots,\explen+1\right\}
\end{equation}
obtained after $\base$-nary expansion of a integer variable: $x = \sum_{t=1}^{\explen+1} \base^{t-1}\zeta_{t}$. Convex hull of $\expkp[2]$ was independently studied by \citep{gillmann2006revlex,siampaper}. A complete knowledge of the superincreasing knapsack polytope will provide a family of valid inequalities to the mixed integer program. \citeauthor{siampaper} demonstrated the practical usefulness of facets to binary expansion knapsacks as cutting planes in a branch-and-cut algorithm for solving mixed integer bilinear programs. 

\begin{rem}
The extended formulation of $\kp$, obtained after adding new variables $\zeta_{it}\in \{0,1\} \: \forall i,t$ and basis expansion of each $x_{i}$ as $x_{i} = \sum_{t=1}^{\explen[i]+1} \base^{t-1}\zeta_{it}$ for some $\base\in\ints_{++}$, does not obey the superincreasing property. Hence we cannot obtain $\cokp$ simply as a projection of the extended formulation.  
\end{rem}

Note that there is no inclusive relationship between superincreasing and divisible knapsacks. However, certain types of knapsacks, such as $\expkp$, may be both divisible and superincreasing. For divisible superincreasing knapsacks, our result provides a explicit linear size minimal description as compared to the implicit exponential size description in \citep{pochet1998sequential}.

Throughout this paper, we assume that $\kp$ is superincreasing. We begin by analyzing the greedy solution of $\kp$ in Section \ref{sec:complex} and use it to provide a useful geometric interpretation to our assumption of superincreasing tuples $\{(\wt_{i},\ub_{i}) \}_{i\in N}$. Section \ref{sec:conv} derives a set of facet-defining inequalities, referred to as \emph{packing inequalities}, to $\cokp$ and our first main result in Theorem~\ref{thm:conv} proves that these inequalities describe $\cokp$. In Section \ref{sec:twoside}, we prove that the convex hull of intersection of two superincreasing knapsacks is given by the facets of the individual knapsack polytopes. This second main result in Theorem~\ref{thm:twokp2} is indeed interesting since the convex hull operator does not distribute in general and further implies that the convex hull of a family of $m$ intersecting superincreasing knapsacks is described by $\bigO(n)$ linear inequalities. For general $\bin$ knapsacks, new valid inequalities were derived in \citep{martin1998intersection,louveaux2008polyhedral} for intersection of two $\le$-type knapsacks and in \citep{louveaux2008polyhedral,fernandez1994partial} for one $\le$- and one $\ge$-type knapsack.

We adopt the following notation. $\conv{\mathcal{X}}$ is the convex hull of a set $\mathcal{X}$. $\ints_{+} (\blue{\ints_{++}})$ is the set of nonnegative (\blue{positive}) integers. $\onevec$ is a vector of ones, $\onevec[i]$ is the $i^{th}$ unit vector and $\zerovec$ is a vector of zeros. $\rect := \{x \in \ints_{+}^{n}\colon x_{i}\le \ub_{i}\ \forall i\}$ is a discrete hyper-rectangle. For $\mathpzc{l} > \mathpzc{s}$, we denote $\sum_{\mathpzc{l}}^{\mathpzc{s}}(\cdot) = 0$ and $\prod_{\mathpzc{l}}^{\mathpzc{s}}(\cdot) = 1$. The positive part of $\xi\in\Re$ is denoted by $[\xi]^{+} := \max\{0,\xi\}$. 

\newcommandx{\B}[2][1=\wt,2=\ub,usedefault]{\mathcal{B}(#1,#2)}
\renewcommand{\b}{\mathpzc{\rhs}}
\newcommand{\I}{I}
\newcommand{\Ij}[1][j]{\I_{#1}}
\newcommand{\Ijm}[1][j]{\I_{#1}^{-}}
\newcommand{\val}[1][n]{f^{*}_{#1}(c)}
\newcommandx{\rhsmax}[3][1=\wt,2=\ub,3=\rhs,usedefault]{\mathpzc{g}(#1,#2,#3)} 

\section{Structure of $\kp$}\label{sec:complex}
\green{This section discusses structural properties of a superincreasing knapsack - first we present a geometric interpretation to the algebraic requirements of Definition~\ref{def:superinc}, then we characterize maximal packings of $\kp$ and finally we state a dynamic program to optimize over $\kp$.} \blue{The proposed results, especially the maximal packing and dynamic program, are known in literature for the $\bin$ case, see for example \citet{shamir1984polynomial}, which has important applications in cryptographic systems. Our contribution is to extend these results to the general integer case and establish a foundation for our main theorems in Sections~\ref{sec:conv} and \ref{sec:twoside}.}

The notion of \emph{lexicographic ordering} will be useful for the rest of the paper. For any two vectors $v^{1}$ and $v^{2}$, the vector $v^{1}$ is lexicographically smaller than $v^{2}$, denoted as $v^{1}\preceq v^{2}$, if either $v^{1}=v^{2}$ or the first (in reverse order) nonzero element $i$ of $v^{1}-v^{2}$ is such that $v^{1}_{i} < v^{2}_{i}$. In the latter case, we denote $v^{1}\prec v^{2}$. Since $\preceq$ is a total order, for any distinct $v^{1}$ and $v^{2}$, either $v^{1}\prec v^{2}$ or $v^{2}\prec v^{1}$ (equivalently $v^{1}\succ v^{2}$). 

\green{The \emph{greedy solution} of an arbitrary (not necessarily superincreasing) knapsack $\kpgen :=\{x\in\rect[][\tilde{\ub}]\colon \tilde{\wt}^{\top}x \le \tilde{\rhs} \}$ is given by}
\begin{equation}\label{greedysol}
\tilde{\theta}_{i} := \min\left\{\tilde{\ub}_{i}, \left\lfloor \frac{\tilde{\rhs} - \sum_{k=i+1}^{n}\tilde{\wt}_{k}\tilde{\theta}_{k}}{\tilde{\wt}_{i}}\right\rfloor\right\} \qquad \forall i=n,\ldots,1. 
\end{equation}
\citet{magazine1975greedy} referred to this solution in the case of trivial upper bounds, i.e., when $\tilde{\ub}_{i} = \lfloor \tilde{\rhs}/\tilde{\wt}_{i} \rfloor \ \forall i$, and studied conditions under which it is the optimal solution for maximizing over $\kpgen$. By construction, we have $\tilde{\theta}\in\kpgen$. \blue{In fact, $\tilde{\theta}$ is lexicographically the largest vector in $\kpgen$.
\begin{lem}\label{lem:greedy}
$\kpgen \subseteq \{x\in\rect[][\tilde{\ub}]\colon x \preceq\tilde{\theta}\}$.
\end{lem}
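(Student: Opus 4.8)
The plan is to argue by contradiction, exploiting the fact that a comparison under $\preceq$ is decided at the \emph{largest} coordinate on which two vectors disagree. Since $\kpgen \subseteq \rect[][\tilde{\ub}]$ holds by definition, it suffices to establish $x \preceq \tilde{\theta}$ for every $x \in \kpgen$. So I would suppose, for contradiction, that some $x \in \kpgen$ satisfies $x \succ \tilde{\theta}$, and let $i$ be the largest index with $x_{i} \neq \tilde{\theta}_{i}$. By the definition of $\succ$ (reverse-order first difference), this forces $x_{i} > \tilde{\theta}_{i}$ and $x_{k} = \tilde{\theta}_{k}$ for all $k > i$.

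The next step is to use the defining recursion \eqref{greedysol} to identify which of the two terms realizes the minimum at coordinate $i$. Because $x \in \rect[][\tilde{\ub}]$ forces $x_{i} \le \tilde{\ub}_{i}$, the strict inequality $x_{i} > \tilde{\theta}_{i}$ rules out the possibility $\tilde{\theta}_{i} = \tilde{\ub}_{i}$; hence the floor term is active, i.e.\ $\tilde{\theta}_{i} = \lfloor (\tilde{\rhs} - \sum_{k=i+1}^{n}\tilde{\wt}_{k}\tilde{\theta}_{k})/\tilde{\wt}_{i}\rfloor$. Writing $q$ for the quantity inside this floor and using integrality of $x_{i}$, I get $x_{i} \ge \tilde{\theta}_{i} + 1 = \lfloor q\rfloor + 1 > q$, and multiplying through by $\tilde{\wt}_{i} > 0$ yields $\tilde{\wt}_{i} x_{i} > \tilde{\rhs} - \sum_{k=i+1}^{n}\tilde{\wt}_{k}\tilde{\theta}_{k}$.

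Finally I would assemble the total weight of $x$ to reach a contradiction. Using $x_{k} = \tilde{\theta}_{k}$ for $k > i$ and the nonnegativity $\tilde{\wt}_{k} x_{k} \ge 0$ for $k < i$, the estimate above gives
\[
\tilde{\wt}^{\top} x \;\ge\; \tilde{\wt}_{i} x_{i} + \sum_{k=i+1}^{n}\tilde{\wt}_{k}\tilde{\theta}_{k} \;>\; \tilde{\rhs},
\]
which contradicts $x \in \kpgen$. Therefore no such $x$ exists, and $x \preceq \tilde{\theta}$ for every $x \in \kpgen$, as claimed.

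The only genuinely subtle point — and thus the main obstacle — is the case analysis that forces $\tilde{\theta}_{i}$ to equal the floor term rather than the bound $\tilde{\ub}_{i}$; once this is settled, the contradiction follows purely from integrality and nonnegativity of the data. I would also note that the argument invokes the greedy formula only at the single coordinate $i$ together with the agreement of all higher coordinates, so it nowhere uses the superincreasing hypothesis, which is consistent with the statement being phrased for the arbitrary knapsack $\kpgen$.
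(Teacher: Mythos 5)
Your proposal is correct and follows essentially the same argument as the paper: contradiction via the largest index $\i$ where $x$ and $\tilde{\theta}$ differ, observing that $x_{\i}\le\tilde{\ub}_{\i}$ forces the floor term in \eqref{greedysol} to be active, and then showing $\tilde{\wt}^{\top}x>\tilde{\rhs}$. The extra care you take in justifying why the floor term must realize the minimum is a slightly more explicit version of the same step in the paper's proof.
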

\begin{proof}
Suppose there exists some $x\in\kpgen$ with $x\succ \tilde{\theta}$. Let $\i := \max\{i \in N\colon x_{i}\neq \tilde{\theta}_{i}\}$. Since $x\succ \tilde{\theta}$ by assumption and $x,\tilde{\theta}\in\ints^{n}$, we have $x_{\i} \ge \tilde{\theta}_{\i}+1$. Now $x_{\i}\le\tilde{\ub}_{\i}$ and equation \eqref{greedysol} imply that $\tilde{\theta}_{\i} = \lfloor \frac{\tilde{\rhs} - \sum_{i>\i}\tilde{\wt}_{i}\tilde{\theta}_{i}}{\tilde{\wt}_{\i}} \rfloor$. Hence $x_{\i} > (\tilde{\rhs} - \sum_{i>\i}\tilde{\wt}_{i}\tilde{\theta}_{i})/\tilde{\wt}_{\i}$. Then $\tilde{\wt}^{\top}x \ge \sum_{i>\i}\tilde{\wt}_{i}x_{i} + \tilde{\wt}_{\i}x_{\i} > \sum_{i>\i}\tilde{\wt}_{i}\tilde{\theta}_{i} + \tilde{\rhs} - \sum_{i>\i}\tilde{\wt}_{i}\tilde{\theta}_{i}= \tilde{\rhs}$, a contradiction to $x\in\kpgen$.
\end{proof}
}

Henceforth, we denote $\theta$ to be the greedy solution of a superincreasing knapsack $\kp$. \green{Proposition~\ref{prop:lexord2} states that for superincreasing knapsacks, the inclusion in Lemma~\ref{lem:greedy} becomes an equality. The proof of this depends on the following observation about points in $\kp$.}

{
\renewcommand{\gamma}{y}
\renewcommand{\theta}{x}
\begin{lem}\label{lem:lexord}
Let $x \in \kp$ and $y\in\rect$ be such that $y \preceq x$. Then $\wt^{\top}y \le \wt^{\top}x$ and hence $y \in \kp$. \green{If $y\prec x$ and $\i := \max\{i \in N\colon y_{i}\neq x_{i}\}$, we have}
\begin{enumerate}
\green{
\item $\wt^{\top}y = \wt^{\top}x$ if and only if $\wt_{\i}=\sum_{i<\i}\wt_{i}\ub_{i}$, $\gamma_{\i}=\theta_{\i}-1$, and $\gamma_{i}=\ub_{i}, \theta_{i} = 0 \ \forall i < \i$.
\item $(\ub_{1},\ldots,\ub_{\i-1},y_{\i},x_{\i+1},\ldots,x_{n}) \in \kp$.}
\end{enumerate}
\end{lem}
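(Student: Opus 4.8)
The plan is to derive all three assertions from a single estimate of the weight difference $\wt^{\top}x-\wt^{\top}y$, invoking the defining inequality $\sum_{k=1}^{i}\wt_{k}\ub_{k}\le\wt_{i+1}$ of Definition~\ref{def:superinc} exactly once. If $y=x$ the bound $\wt^{\top}y\le\wt^{\top}x$ is immediate, so I would assume $y\prec x$ and set $\i=\max\{i\in N\colon y_{i}\neq x_{i}\}$, so that $y_{i}=x_{i}$ for $i>\i$ and $y_{\i}<x_{\i}$. Then
\[
\wt^{\top}x-\wt^{\top}y=\wt_{\i}(x_{\i}-y_{\i})+\sum_{i<\i}\wt_{i}(x_{i}-y_{i}).
\]
I would bound the first term below by $\wt_{\i}$, since $x_{\i}-y_{\i}\ge 1$ for integers, and the second term below by $-\sum_{i<\i}\wt_{i}\ub_{i}$, since $x_{i}\ge 0$ and $y_{i}\le\ub_{i}$. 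Applying the superincreasing property at index $\i-1$ gives $\sum_{i<\i}\wt_{i}\ub_{i}\le\wt_{\i}$ (the empty-sum convention covering $\i=1$), whence $\wt^{\top}x-\wt^{\top}y\ge\wt_{\i}-\wt_{\i}=0$. Together with $y\in\rect$ this yields $\wt^{\top}y\le\wt^{\top}x\le\rhs$ and hence $y\in\kp$.

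For part (1) I would simply read the equality conditions off the chain $0=\wt^{\top}x-\wt^{\top}y\ge\wt_{\i}+\sum_{i<\i}\wt_{i}(x_{i}-y_{i})\ge\wt_{\i}-\sum_{i<\i}\wt_{i}\ub_{i}\ge 0$: when the left side is zero, every inequality must be tight. The first tightness forces $x_{\i}-y_{\i}=1$, i.e. $y_{\i}=x_{\i}-1$; the last forces the superincreasing inequality to hold with equality, $\wt_{\i}=\sum_{i<\i}\wt_{i}\ub_{i}$; and the middle forces $\sum_{i<\i}\wt_{i}(x_{i}-y_{i}+\ub_{i})=0$, in which every summand is nonnegative and $\wt_{i}>0$, so that $x_{i}=0$ and $y_{i}=\ub_{i}$ for every $i<\i$. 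The converse is a direct substitution. The only point needing care here is invoking positivity of the weights to pass from a vanishing weighted sum to the coordinatewise equalities.

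Finally, for part (2) I would avoid any recomputation by observing that the candidate point $z:=(\ub_{1},\ldots,\ub_{\i-1},y_{\i},x_{\i+1},\ldots,x_{n})$ already lies in $\rect$ coordinatewise, using $y\in\rect$ and $x\in\kp\subseteq\rect$, and satisfies $z\prec x$: indeed $z$ and $x$ agree in every coordinate above $\i$, while $z_{\i}=y_{\i}<x_{\i}$, so $\i$ is the highest index at which they differ. Hence $z\preceq x$, and the already-established first part of the lemma, applied with $z$ in place of $y$, gives $z\in\kp$ at once. I expect the whole argument to be elementary; the essential mechanism is that the single bound $\sum_{i<\i}\wt_{i}\ub_{i}\le\wt_{\i}$ makes the lexicographic ``gain'' at coordinate $\i$ dominate every possible ``loss'' at the lower coordinates, and the main obstacle is merely the careful tracking of tightness in the equality case of part (1).
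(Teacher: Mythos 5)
Your proposal is correct and follows essentially the same route as the paper's proof: the same single estimate $\wt^{\top}(x-y)=\wt_{\i}(x_{\i}-y_{\i})+\sum_{i<\i}\wt_{i}(x_{i}-y_{i})\ge\wt_{\i}-\sum_{i<\i}\wt_{i}\ub_{i}\ge 0$, with the equality conditions read off from tightness for part (1) and part (2) obtained by applying the first assertion to the point $(\ub_{1},\ldots,\ub_{\i-1},y_{\i},x_{\i+1},\ldots,x_{n})\prec x$. You merely spell out the tightness analysis that the paper leaves implicit.
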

\begin{proof}
Suppose that $y\neq x$. Since $y\preceq x$ and $y\in\ints^{n}_{+}$, we have $y_{\i}\le x_{\i}-1$. Then, $\wt^{\top}(\gamma - \theta) = \sum_{i<\i}\wt_{i}(\gamma_{i}-\theta_{i}) +\wt_{\i}(\gamma_{\i}-\theta_{\i}) \le \sum_{i< \i}\wt_{i}\ub_{i}\: - \: \wt_{\i} \le 0$, giving us $\wt^{\top}\gamma \le \rhs$ and $\gamma\in\kp$. This also leads to the conditions for $\wt^{\top}(\gamma - \theta) = 0$. Finally, $(\ub_{1},\ldots,\ub_{\i-1},y_{\i},x_{\i+1},\ldots,x_{n})\prec x$ implies the second claim.
\end{proof}
}


\blue{Lemmas~\ref{lem:greedy} and \ref{lem:lexord} and the fact that $\theta\in\kp$ gives us}
\begin{prop}\label{prop:lexord2}
$\kp = \{x\in\rect\colon x \preceq\theta \}$.
\end{prop}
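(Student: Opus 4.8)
The plan is to prove the two inclusions separately, with each one handed to us almost entirely by a preceding lemma. For the forward inclusion $\kp \subseteq \{x\in\rect\colon x \preceq\theta \}$, I would simply specialize Lemma~\ref{lem:greedy}. That lemma is stated for an arbitrary knapsack $\kpgen$; taking $\tilde{\wt}=\wt$, $\tilde{\ub}=\ub$, $\tilde{\rhs}=\rhs$ makes $\kpgen=\kp$, $\rect[][\tilde{\ub}]=\rect$, and identifies the greedy solution $\tilde{\theta}$ with $\theta$. The lemma then reads exactly $\kp \subseteq \{x\in\rect\colon x \preceq\theta \}$, so no further work is needed here. Note that this direction never uses the superincreasing hypothesis.

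For the reverse inclusion $\{x\in\rect\colon x \preceq\theta \} \subseteq \kp$, I would fix an arbitrary $x\in\rect$ with $x\preceq\theta$ and invoke Lemma~\ref{lem:lexord}. The only point requiring care is the matching of names: Lemma~\ref{lem:lexord} takes a point of $\kp$ and a $\preceq$-smaller point of $\rect$ and places the latter in $\kp$, so I would apply it with its ``$x$'' set to $\theta$ and its ``$y$'' set to my $x$. The hypotheses are met precisely because $\theta\in\kp$ (the stated fact), $x\in\rect$, and $x\preceq\theta$. The lemma's conclusion $\wt^{\top}x\le\wt^{\top}\theta\le\rhs$ then yields $x\in\kp$, completing the inclusion and hence the equality.

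There is essentially no technical obstacle left at this stage: the substantive content has been front-loaded into Lemmas~\ref{lem:greedy} and \ref{lem:lexord}, and the superincreasing structure enters only indirectly, through the inequality $\sum_{i<\i}\wt_{i}\ub_{i}\le\wt_{\i}$ used inside the proof of Lemma~\ref{lem:lexord}. If I were to flag a ``hard part,'' it is conceptual rather than computational, namely recognizing that the nontrivial direction is the reverse inclusion: that every lexicographically feasible integer point is genuinely feasible for the knapsack is exactly the monotonicity-under-$\preceq$ property that fails for general knapsacks and holds here only because the tuples $\{(\wt_{i},\ub_{i})\}$ are weakly superincreasing. I would therefore present the two-line deduction but emphasize this asymmetry, since it is what makes $\theta$ a faithful lexicographic threshold for $\kp$.
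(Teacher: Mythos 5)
Your proof is correct and matches the paper exactly: the paper derives Proposition~\ref{prop:lexord2} in one line by combining Lemma~\ref{lem:greedy} (forward inclusion) with Lemma~\ref{lem:lexord} applied to $\theta\in\kp$ (reverse inclusion), which is precisely your two-step argument. Your added remark that the superincreasing hypothesis enters only through the reverse inclusion is a fair and accurate observation, though not something the paper makes explicit.
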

Thus, a superincreasing knapsack is exactly the set of integer points within $\rect$ that are lexicographically smaller than the greedy solution. This implies that two distinct superincreasing sequences, $\{\wt,\ub\}$ and $\{w,\ub\}$, represent the same knapsack if and only if the corresponding greedy solutions are equal. 

We now describe the connection between the greedy solution and the notion of \emph{maximal packing}. Let $\rhsmax[\tilde{\wt}][\tilde{\ub}][\tilde{\rhs}] := \max\{\tilde{\wt}^{\top}x \colon \tilde{\wt}^{\top}x \le \tilde{\rhs}, x\in \rect[][\tilde{\ub}] \}$ denote the maximum attainable capacity of $\kpgen$. Clearly, $\kpgen = \{x\in\rect[][\tilde{\ub}]\colon \tilde{\wt}^{\top}x \le \rhsmax[\tilde{\wt}][\tilde{\ub}][\tilde{\rhs}] \}$. A maximal packing of $\kpgen$ is a vector $x \in\kpgen$ such that $\tilde{\wt}^{\top}x = \rhsmax[\tilde{\wt}][\tilde{\ub}][\tilde{\rhs}]$. Maximal packing may not be unique and in general, computing it reduces to solving the NP-hard subset-sum problem, although it can be computed in linear time for divisible knapsacks \citep{alfonsin1998variations}. \blue{For superincreasing knapsacks, Proposition~\ref{prop:lexord2} and Lemma~\ref{lem:lexord} imply that $\theta$ is a maximal packing of $\kp$.} The conditions for $\theta$ being the only maximal packing are characterized next.

\begin{prop}\label{prop:packing}
$\theta$ is a unique maximal packing of $\kp$ \green{if and only if for every $j\in N$ with $\wt_{j} = \sum_{i=1}^{j-1}\wt_{i}\ub_{i}$ and $\theta_{j} > 0$, there exists $i < j$ such that $\theta_{i} > 0$.}
\end{prop}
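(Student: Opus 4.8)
The plan is to characterize the \emph{failure} of uniqueness and then negate. The preceding discussion already establishes that $\theta$ is a maximal packing of $\kp$, so $\theta$ fails to be the unique maximal packing exactly when there is some $y\in\kp$ with $y\neq\theta$ and $\wt^{\top}y=\wt^{\top}\theta$. By Proposition~\ref{prop:lexord2} every feasible point obeys $y\preceq\theta$, so any such $y$ in fact satisfies $y\prec\theta$. This reduces the question to: when does there exist $y\prec\theta$ with $\wt^{\top}y=\wt^{\top}\theta$?

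The core of the argument is a direct appeal to part~(1) of Lemma~\ref{lem:lexord}. Setting $\i:=\max\{i\in N\colon y_{i}\neq\theta_{i}\}$, the lemma tells us that $\wt^{\top}y=\wt^{\top}\theta$ holds if and only if $\wt_{\i}=\sum_{i<\i}\wt_{i}\ub_{i}$, $y_{\i}=\theta_{\i}-1$, and $y_{i}=\ub_{i},\ \theta_{i}=0$ for all $i<\i$. In particular $y_{\i}=\theta_{\i}-1\ge 0$ forces $\theta_{\i}>0$. Thus a second maximal packing whose top differing index equals a given $j$ exists precisely when $\wt_{j}=\sum_{i<j}\wt_{i}\ub_{i}$, $\theta_{j}>0$, and $\theta_{i}=0$ for all $i<j$.

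First I would record the forward direction, which is immediate from the lemma as above. Then I would establish the converse by exhibiting the packing explicitly: whenever some index $j$ satisfies $\wt_{j}=\sum_{i<j}\wt_{i}\ub_{i}$, $\theta_{j}>0$, and $\theta_{i}=0$ for all $i<j$, I set $y_{i}=\ub_{i}$ for $i<j$, $y_{j}=\theta_{j}-1$, and $y_{i}=\theta_{i}$ for $i>j$. This $y$ lies in $\rect$, differs from $\theta$, and satisfies $y\prec\theta$ (so $y\in\kp$ by Proposition~\ref{prop:lexord2}) together with $\wt^{\top}y=\wt^{\top}\theta$, hence it is a distinct maximal packing. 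Combining both directions, non-uniqueness is equivalent to the existence of an index $j$ with $\wt_{j}=\sum_{i<j}\wt_{i}\ub_{i}$, $\theta_{j}>0$, and $\theta_{i}=0$ for all $i<j$; negating this existential statement yields exactly the claimed condition that every $j$ with $\wt_{j}=\sum_{i<j}\wt_{i}\ub_{i}$ and $\theta_{j}>0$ must have some $i<j$ with $\theta_{i}>0$.

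The step I expect to require the most care is the bookkeeping around the top differing index $\i$: one must argue that it suffices to analyze the single largest coordinate at which $y$ and $\theta$ disagree, and that Lemma~\ref{lem:lexord}(1) pins down $y$ completely on all coordinates below $\i$ (namely $y_{i}=\ub_{i}$ there). The role of the hypothesis $\theta_{j}>0$ is the subtle point: it is precisely what guarantees that the decremented entry $y_{j}=\theta_{j}-1$ is nonnegative and hence feasible, so that the explicit $y$ constructed in the converse is well defined.
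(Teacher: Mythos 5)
Your proposal is correct and follows essentially the same route as the paper: one direction is obtained by applying Lemma~\ref{lem:lexord}(1) to a hypothetical second maximal packing $y\prec\theta$ (via Proposition~\ref{prop:lexord2}), and the other by exhibiting exactly the same explicit alternative packing $y=(\ub_{1},\ldots,\ub_{j-1},\theta_{j}-1,\theta_{j+1},\ldots,\theta_{n})$. Your framing as ``characterize non-uniqueness, then negate'' is only a cosmetic repackaging of the paper's two contradiction arguments, and your explicit remark that $y_{\i}=\theta_{\i}-1\ge 0$ forces $\theta_{\i}>0$ is a small point the paper leaves implicit.
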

\begin{proof}
Assume that for every $j\in N$ with $\wt_{j} = \sum_{i=1}^{j-1}\wt_{i}\ub_{i}$, there exists $i < j$ such that $\theta_{i} > 0$. Suppose $\theta$ is not the unique maximal packing and there exists some $\gamma\in\kp\setminus\{\theta\}$ such that $\wt^{\top}\gamma = \wt^{\top}\theta$. \blue{Proposition~\ref{prop:lexord2} gives us $\gamma\prec\theta$. Then Lemma~\ref{lem:lexord} implies that $\wt_{\i}=\sum_{i<\i}\wt_{i}\ub_{i}$ and $\theta_{i}=0 \ \forall i<\i$, a contradiction to our assumption.} Now suppose that $\theta$ is unique and let there exist some $j\in N$ with $\wt_{j} = \sum_{i=1}^{j-1}\wt_{i}\ub_{i}$ and \blue{$\theta_{j} > 0$} but \blue{$\theta_{i}=0$} for all $i < j$. Set \blue{$\gamma \prec \theta$ as follows: $\gamma_{i} = \ub_{i} \ \forall i < j$, $\gamma_{j} = \theta_{j}-1$, and $\gamma_{i} = \theta_{i} \ \forall i > j$.} Then $\gamma\in\rect$ and $\wt^{\top}\gamma = \wt^{\top}\theta = \rhsmax$, contradicting the uniqueness of $\theta$.
\end{proof}


\begin{rem}
When $\wt_{i}=\base^{i-1}$ and $\ub_{i}=\base-1$ for all $i\in N$ and some $\base\in\ints_{++}$, we have the set $\expkp$ from \eqref{expkp} and it is straightforward to verify in this case that $\theta$ is the unique representation of $\rhs$ in base $\base$. 
\end{rem}


\blue{\begin{examp}
Let $\wt = (2,8,46,150,310), \ub = (3,5,2,1,2)$ and $\kp = \{x\in\rect[5]\colon \sum_{i=1}^{5}\wt_{i}x_{i} \le 841\}$. It can be verified by enumerating the points in $\kp$ that $\rhsmax[][][841] = 840$. The greedy solution is $\theta=(0,3,1,1,2)$ and observe that $\wt^{\top}\theta = 8(3)+46+150+310(2) = 840$. Also, $\theta$ is the only point in $\kp$ that yields $\rhsmax[][][841]$ and it satisfies the sufficient condition for uniqueness: $\wt_{3} = \wt_{1}\ub_{1}+\wt_{2}\ub_{2}$ with $\theta_{3},\theta_{2} > 0$. An alternate superincreasing knapsack representation is $\kp=\{x\in\rect[5]\colon x_{1} + 3x_{2} + 18x_{3} + 95x_{4}+ 189x_{5}\le 500\}$.

Now let $\kp^{\prime}=\{x\in\rect[5]\colon \sum_{i=1}^{5}\wt_{i}x_{i} \le 863\}$. We have $\rhsmax[][][863] = 862, \theta^{\prime}=(0,0,2,1,2),\wt^{\top}\theta^{\prime} = 862$ and $\theta^{\prime}$ does not satisfy the necessary condition for uniqueness: $\wt_{3} = \wt_{1}\ub_{1}+\wt_{2}\ub_{2}$ with $\theta_{3}^{\prime}>0,\theta_{1}^{\prime}=\theta_{2}^{\prime} = 0$. Another maximal packing is $\gamma^{\prime}=(3,5,1,1,2)$, which is equal to $\theta^{\prime}+(\ub_{1},\ub_{2},-1,0,0)$. 

Finally, let $\tilde{\wt}=(2,8,40,150,310),\tilde{\ub}=(1,5,4,1,2)$ and note that $\tilde{\wt}_{3} < \tilde{\wt}_{1}\tilde{\ub}_{1} + \tilde{\wt}_{2}\tilde{\ub}_{2}$. For $\kpgen = \{x\in\rect[5][\tilde{\ub}]\colon \sum_{i=1}^{5}\tilde{\wt}_{i}x_{i} \le 825 \}$ the greedy solution is $\tilde{\theta} = (1,1,1,1,2)$ with $\tilde{\wt}^{\top}\tilde{\theta}=820 < 822 =  \rhsmax[\tilde{\wt}][\tilde{\ub}][825] = \tilde{\wt}^{\top}\tilde{\gamma}$, where $\tilde{\gamma}=(1,5,4,0,2)$. Hence the greedy solution does not give a maximal packing.
\hfill\myqed
\end{examp}}

Since $\theta$ is a maximal packing of $\kp$ and is computable in $\bigO(n)$ time, we assume w.o.l.o.g. that $\rhs=\rhsmax=\wt^{\top}\theta$. 

\green{The equivalence to lexicographic ordering in Proposition~\ref{prop:lexord2} lends intuition to the points contained in $\kp$ and also enables us to prove our results. An immediate consequence is a linear time algorithm for optimization over $\kp$. To state this result, we first define the support of $\theta$}.
\begin{defn}\label{defn:I}
Let $\I := \{i \in N\colon \theta_{i}\ge 1\}$ be the support of $\theta$ and denote\footnote{$n\in\I$ since $\wt_{n}\ub_{n}\le \rhs$ implies $\theta_{n} = \ub_{n}$ and hence $n$ is the largest index in $\I$.} $\I = \{i_{1},\dots,i_{\r},i_{\r +1}:=n\}$ for some integer $\r \ge 0$, where we assume $i_{1} < i_{2} < \cdots < i_{\r} < n$. For every $j\in N$, let $\Ij := \{i\in \I \colon i > j\}$, $\Ijm := \{i\in \I \colon i < j\}$, $\prev{j} := \max\{i\colon i\in\Ijm \}$, and $\next{j} := \min\{i \colon i\in\Ij \}$. If $j < i_{1}$ (resp. $j=n$), then $\prev{j} = 0$ (resp. $\next{j}=0$).
\end{defn}

\renewcommand{\S}[1]{\mathcal{S}_{#1}}
\renewcommand{\P}[1]{\conv\S{#1}}
\newcommand{\ite}[1][t]{j}

Clearly $\Ij[n] = \Ijm[i_{1}] = \emptyset$ and $\Ij[j+1]=\Ij\setminus \{j+1\}$ for all $j\in N$. 


\begin{prop}\label{prop:dp}
There exists a $\bigO(n)$ time algorithm to optimize over $\kp$. Given any $c\in\Re^{n}$, for every $\ite\in\I$, the optimal value $\val[\ite] := \max \{ \sum_{i=1}^{\ite}c_{i}x_{i} \colon x \in \kp, x_{k}=\theta_{k} \ \forall k\in\Ij[\ite]\}$ is equal to
\begin{equation*}
\val[\ite] \;=\; \max\left\{[c_{\ite}]^{+}(\theta_{\ite}-1) \:+\: \sum_{i=1}^{\ite-1}[c_{i}]^{+}\ub_{i} \;, \; c_{\ite}\theta_{\ite} + \val[{\prev{j}}] \right\}.\qed
\end{equation*}
\end{prop}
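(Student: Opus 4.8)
The plan is to first reduce $\val[j]$ to a purely lexicographic maximization over the first $j$ coordinates, then obtain the one–step recursion by splitting on the value of $x_{\ite}$, and finally account for the running time. Throughout I write $\theta|_{\ite} := (\theta_{1},\dots,\theta_{\ite})$, and I adopt the convention $\val[0]:=0$ (the empty maximization) so that the recursion is well defined when $\prev{\ite}=0$.

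First I would show that
\[
\val[\ite] \;=\; \max\Big\{ \textstyle\sum_{i=1}^{\ite} c_{i}x_{i} \;:\; (x_{1},\dots,x_{\ite})\in\rect[\ite],\ (x_{1},\dots,x_{\ite})\preceq \theta|_{\ite} \Big\}.
\]
The feasible region defining $\val[\ite]$ consists of $x\in\kp$ with $x_{k}=\theta_{k}$ for all $k\in\Ij[\ite]$. I claim every such $x$ in fact satisfies $x_{i}=\theta_{i}$ for \emph{all} $i>\ite$, including the non-support ones. Indeed, by Proposition~\ref{prop:lexord2} we have $x\preceq\theta$; let $\i$ be the largest index with $x_{\i}\neq\theta_{\i}$, so $x_{\i}<\theta_{\i}$. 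If $\i>\ite$, then either $\i\in\I$, which forces $x_{\i}=\theta_{\i}$ (contradiction), or $\i\notin\I$, which forces $0\le x_{\i}<\theta_{\i}=0$ (contradiction). Hence $\i\le \ite$, so $x_{i}=\theta_{i}$ for every $i>\ite$, and $x\preceq\theta$ collapses to $(x_{1},\dots,x_{\ite})\preceq\theta|_{\ite}$. Conversely, any $(x_{1},\dots,x_{\ite})\in\rect[\ite]$ with $(x_{1},\dots,x_{\ite})\preceq\theta|_{\ite}$ extends via $x_{i}=\theta_{i}$ for $i>\ite$ to a point of $\rect$ that is $\preceq\theta$, hence lies in $\kp$ by Proposition~\ref{prop:lexord2}. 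Since the objective involves only $x_{1},\dots,x_{\ite}$, the displayed reformulation follows.

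Next I would split the reformulated problem on the top variable $x_{\ite}$, using $\ite\in\I$ so $\theta_{\ite}\ge1$. If $x_{\ite}\le\theta_{\ite}-1$, then $(x_{1},\dots,x_{\ite})\prec\theta|_{\ite}$ irrespective of the lower coordinates, so $x_{1},\dots,x_{\ite-1}$ range freely over $\{0,\dots,\ub_{i}\}$ and $x_{\ite}$ over $\{0,\dots,\theta_{\ite}-1\}$; maximizing each term separately gives $[c_{\ite}]^{+}(\theta_{\ite}-1)+\sum_{i=1}^{\ite-1}[c_{i}]^{+}\ub_{i}$. If instead $x_{\ite}=\theta_{\ite}$, the constraint becomes $(x_{1},\dots,x_{\ite-1})\preceq\theta|_{\ite-1}$; applying the collapse argument above to the indices strictly between $\prev{\ite}$ and $\ite$ (all of which have $\theta_{i}=0$) forces $x_{i}=0$ there, so this branch reduces to a maximization over $(x_{1},\dots,x_{\prev{\ite}})\in\rect[{\prev{\ite}}]$ with $(x_{1},\dots,x_{\prev{\ite}})\preceq\theta|_{\prev{\ite}}$, i.e. contributes $c_{\ite}\theta_{\ite}+\val[{\prev{\ite}}]$. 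Taking the larger of the two branches yields exactly the claimed formula, the base case being $\ite=i_{1}$, where $\prev{\ite}=0$ and $\val[0]=0$.

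For the complexity, the optimum of $c^{\top}x$ over $\kp$ is $\val[n]$, since $n\in\I$ and $\Ij[n]=\emptyset$. I would precompute $\theta$ in $\bigO(n)$ time from \eqref{greedysol}, the prefix sums $\sum_{i=1}^{\ite-1}[c_{i}]^{+}\ub_{i}$ in $\bigO(n)$ time, and then evaluate the recursion at the elements of $\I$ in increasing order, each step costing $\bigO(1)$ given the stored prefix sum and the previously computed $\val[{\prev{\ite}}]$; the total is $\bigO(n)$. The only delicate point is the reduction in the first step, namely that fixing the support coordinates above $\ite$ together with $x\preceq\theta$ automatically pins down \emph{every} coordinate above $\ite$, the non-support ones being squeezed to $0$ by nonnegativity and $\theta_{i}=0$. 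Once this collapse is in hand, the recursion is a routine single-variable case split and the complexity bookkeeping is immediate.
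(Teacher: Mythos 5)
Your proof is correct and follows essentially the same route the paper sketches: your ``collapse'' step is precisely the paper's observation~\eqref{eq:restrict}, the reduction to a lexicographic maximization is Proposition~\ref{prop:lexord2}, and the case split on $x_{j}$ (lower coordinates free when $x_{j}\le\theta_{j}-1$, recursion to $\prev{j}$ when $x_{j}=\theta_{j}$) is the content of Lemma~\ref{lem:lexord}. The paper leaves these details implicit, so your write-up is simply a fully fleshed-out version of the intended argument.
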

\blue{The correctness of this recursion follows from Lemma~\ref{lem:lexord}, Proposition~\ref{prop:lexord2} and the observation that}
\begin{equation}\label{eq:restrict}
\{x\in\kp\colon x_{k}=\theta_{k} \ \forall k\in\Ij[\ite]\} \,\subseteq\, \big\{x\colon x_{j} \in \{0,1,\dots,\theta_{j}\},\, x_{i} = 0 \ \forall i \in \{j+1,\ldots,n\}\setminus \Ij \big\}, \quad \forall j\in N.
\end{equation}
Define $\Opt := \arg\max\{ c^{\top}x \colon x \in \kp\}$ as the set of optimal solutions to the maximization over $\kp$. The dynamic program of Proposition~\ref{prop:dp} can be represented as a binary tree with $|\I|+1$ leaf nodes, as illustrated in Figure~\ref{fig:dp}. The elements of $\Opt$ correspond to some of the leaf nodes whereas the elements of $\I$ are in a bijection to the non-leaf nodes. For a non-leaf $j\in\I$, the set $\{x\in\kp\colon x_{k}=\theta_{k} \ \forall k\in\Ij,x_{j} \le \theta_{j}-1 \}$ contains the leaf descendant denoted by $\ell_{j}$ whereas the set $\{x\in\kp\colon x_{k}=\theta_{k} \ \forall k\in\Ij, x_{j} = \theta_{j} \}$ corresponds to the next non-leaf node $\prev{j}$ if $j\neq i_{1}$. The two leaf descendants of $i_{1}$ are $\ell_{i_{1}}$ and $\ell_{0}$. In particular,
\begin{equation}\label{leafnode}
\ell_{j}=\left\{x \colon x_{i}\in\{0,\ub_{i}\} \ \forall i < j, \, x_{j}\in\{0,\theta_{j}-1 \}, \, x_{i}=\theta_{i} \ \forall i > j \right\} \quad \forall j\in\I, \quad \text{and} \quad \ell_{0} = \{\theta\}.
\end{equation}
This implies that 
\begin{equation}\label{optsol}
\Opt \subseteq \{\theta\} \cup \bigcup_{j\in\I} \ell_{j}.
\end{equation}

\begin{figure}[htbp]
\begin{center}
\includegraphics[scale=0.475]{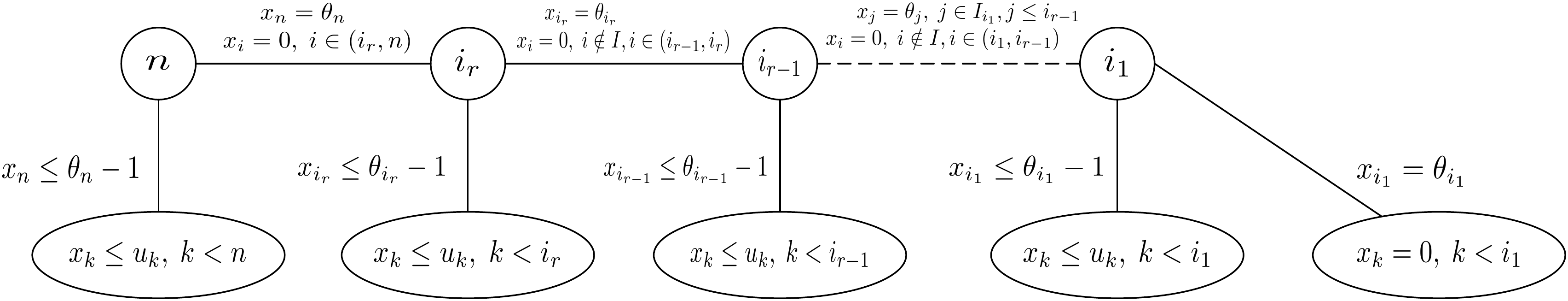}
\caption{Binary tree for the dynamic programming algorithm of Proposition \ref{prop:dp}.}
\label{fig:dp}
\end{center}
\end{figure}

\newcommand{\face}[1][j]{\mathpzc{T}_{#1}}

\section{Facets and convex hull}\label{sec:conv}

\green{We first derive valid inequalities whose coefficients depend on the greedy solution (and maximal packing) $\theta$.} To state the proposed \emph{packing inequalities}, for any $j\in N\setminus n$, define a function $\phi_{j} \colon \Ij \mapsto \ints_{+}$ as
\begin{equation}\label{newphi}
\phi_{j}(i) \::=\: (\ub_{j}-\theta_{j})\,\prod_{\substack{k=\next{j}\colon\\k\in\I}}^{\prev{i}}(\ub_{k}+1-\theta_{k}) \qquad i\in\Ij.
\end{equation}
\green{From notational convention, we have} $\phi_{j}(\next{j}) = (\ub_{j}-\theta_{j})$ and $\phi_{n}(\cdot) = 0$. \green{The recursive definition of $\phi_{j}(\cdot)$ leads to the following identities that will be useful while arguing validity and facet-defining property}. 

\begin{obs}\label{obs:phi}
For any $j\in N\setminus n$ and $i\in\Ij$, we have $\phi_{j}(\next{i}) - \phi_{j}(i) = \phi_{j}(i)(\ub_{i}-\theta_{i})$. Consequently, $\phi_{j}(i) = \ub_{j}-\theta_{j} + \sum_{k\in\Ij\colon k < i}\phi_{j}(k)(\ub_{k}-\theta_{k})$.
\end{obs}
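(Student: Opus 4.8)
The plan is to read both identities directly off the multiplicative structure of the definition \eqref{newphi}, needing nothing more than one index computation and a telescoping sum.

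First I would establish the single-step identity. Fix $j \in N \setminus n$ and $i \in \Ij$ with $i \neq n$, so that $\next{i} \in \Ij$ is a genuine index. The crucial observation is that, since $i \in \I$, the largest element of $\I$ lying strictly below $\next{i}$ is $i$ itself, i.e.\ $\prev{\next{i}} = i$ (nothing of $\I$ can sit strictly between $i$ and $\next{i}$). Writing out the two products from \eqref{newphi}, the product defining $\phi_{j}(\next{i})$ ranges over $k \in \I$ from $\next{j}$ up to $\prev{\next{i}} = i$, whereas the product defining $\phi_{j}(i)$ ranges only up to $\prev{i}$. Because $i \in \I$, these two ranges differ in exactly the single factor $(\ub_{i} + 1 - \theta_{i})$, so $\phi_{j}(\next{i}) = \phi_{j}(i)\,(\ub_{i} + 1 - \theta_{i})$. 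Subtracting $\phi_{j}(i)$ from both sides gives $\phi_{j}(\next{i}) - \phi_{j}(i) = \phi_{j}(i)(\ub_{i} - \theta_{i})$, which is the first claim; the degenerate case $i = n$ is consistent since then $\theta_{n} = \ub_{n}$ forces the right-hand side to vanish.

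Next I would obtain the closed form by telescoping. Enumerating $\{k \in \Ij : k < i\} = \{\next{j} = m_{1} < \cdots < m_{p}\}$, the definition of $\nextid$ gives $\next{m_{t}} = m_{t+1}$ for $t < p$ and $\next{m_{p}} = i$. Summing the single-step identity over all such $k$ makes the left-hand side telescope to $\phi_{j}(i) - \phi_{j}(\next{j})$, while the right-hand side is precisely $\sum_{k \in \Ij : k < i} \phi_{j}(k)(\ub_{k} - \theta_{k})$. Substituting the boundary value $\phi_{j}(\next{j}) = \ub_{j} - \theta_{j}$ and rearranging yields the stated identity.

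The steps are essentially bookkeeping, so the only thing requiring care will be the index conventions at the boundaries rather than any deep obstacle: I would confirm that $\phi_{j}(\next{j}) = \ub_{j} - \theta_{j}$ holds because the product range $\prev{\next{j}} < \next{j}$ is vacuous and an empty product equals $1$, and I would check the base case $i = \next{j}$, where the sum in the second identity is empty and the identity collapses correctly to $\phi_{j}(\next{j}) = \ub_{j} - \theta_{j}$.
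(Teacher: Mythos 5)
Your proof is correct and follows essentially the same route as the paper, which simply declares the one-step identity immediate from the definition of $\phi_{j}(\cdot)$ and obtains the closed form by a straightforward induction on $i$ (your telescoping sum is that induction written out). Your explicit handling of the boundary conventions ($\prev{\next{i}}=i$, the empty product giving $\phi_{j}(\next{j})=\ub_{j}-\theta_{j}$, and the degenerate case $i=n$) is careful bookkeeping the paper leaves implicit.
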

\begin{proof}
\blue{The first statement is obvious from the definition of $\phi_{j}(\cdot)$. The second statement is obtained via a straightforward induction on $i$ and using the first statement.
}
\end{proof}

\begin{prop}[Packing inequalities]\label{prop:valid}
For any $j\in N$, the inequality 
\begin{equation}\label{packineq}
x_{j} \:+\: \sum_{i\in\Ij}\phi_{j}(i)(x_{i}-\theta_{i}) \; \le \; \theta_{j}
\end{equation}
is valid to $\cokp$.
\end{prop}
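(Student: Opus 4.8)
The plan is to invoke Proposition~\ref{prop:lexord2}, which identifies $\kp$ with $\{x\in\rect\colon x\preceq\theta\}$, together with the elementary fact that a linear inequality is valid for $\cokp$ exactly when it holds at every point of $\kp$. Thus it suffices to verify \eqref{packineq} at each integer $x\in\kp$. Fix $j\in N$ and write $V(x):=x_{j}+\sum_{i\in\Ij}\phi_{j}(i)(x_{i}-\theta_{i})$ for the left-hand side; the goal is $V(x)\le\theta_{j}$. The case $x=\theta$ is immediate, since every $x_{i}-\theta_{i}$ vanishes and $V(\theta)=\theta_{j}$.

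For $x\prec\theta$ I would set $\i:=\max\{i\in N\colon x_{i}\neq\theta_{i}\}$, so that $x_{\i}\le\theta_{\i}-1$ and $x_{i}=\theta_{i}$ for all $i>\i$. If $\i\le j$, then every $i\in\Ij$ has $i>j\ge\i$, the sum in $V(x)$ is zero, and $V(x)=x_{j}$, which equals $\theta_{j}$ when $\i<j$ and is at most $\theta_{j}-1$ when $\i=j$; either way \eqref{packineq} holds. The substantive case is $\i>j$: here $x_{\i}<\theta_{\i}$ forces $\theta_{\i}\ge1$, hence $\i\in\I$ and $\i\in\Ij$.

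In this case I would bound $V(x)$ termwise, using $\phi_{j}(i)\ge0$ (immediate from $\theta_{k}\le\ub_{k}$). The terms with $i\in\Ij$, $i>\i$ vanish; the term $i=\i$ is at most $-\phi_{j}(\i)$ since $x_{\i}-\theta_{\i}\le-1$; each term with $i\in\Ij$, $i<\i$ is at most $\phi_{j}(i)(\ub_{i}-\theta_{i})$ since $x_{i}\le\ub_{i}$; and $x_{j}\le\ub_{j}$. Summing these estimates yields
\[
V(x)\;\le\;\ub_{j}-\phi_{j}(\i)+\sum_{i\in\Ij\colon i<\i}\phi_{j}(i)(\ub_{i}-\theta_{i}).
\]
The crux is then to recognize the summation through Observation~\ref{obs:phi}, whose second identity reads $\phi_{j}(\i)=\ub_{j}-\theta_{j}+\sum_{i\in\Ij\colon i<\i}\phi_{j}(i)(\ub_{i}-\theta_{i})$. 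Substituting collapses the right-hand side to $\ub_{j}-\phi_{j}(\i)+\bigl(\phi_{j}(\i)-(\ub_{j}-\theta_{j})\bigr)=\theta_{j}$, which finishes the proof.

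I expect the main obstacle to be exactly this final telescoping: the individual termwise bounds are quite loose, and it is the recursive identity of Observation~\ref{obs:phi} that forces their sum to collapse precisely to the tight value $\theta_{j}$. Everything else is bookkeeping --- confirming $\i\in\Ij$ in the main case, and checking that the degenerate situations ($\i\le j$, and the boundary $j=n$ where $\Ij=\emptyset$ and \eqref{packineq} reduces to $x_{n}\le\ub_{n}=\theta_{n}$) are subsumed by the case split.
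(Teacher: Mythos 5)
Your proof is correct. It reaches the same punchline as the paper --- the telescoping identity of Observation~\ref{obs:phi} collapsing the loose termwise bounds to exactly $\theta_{j}$ --- but it gets there by a different organization. The paper proves validity by induction on the elements of $\Ij$, showing that each partial inequality $x_{j}+\sum_{i\in\Ij\colon i\le k}\phi_{j}(i)(x_{i}-\theta_{i})\le\theta_{j}$ holds on the nested restriction $\S{k}=\{x\in\kp\colon x_{i}=\theta_{i}\ \forall i\in\Ij[k]\}$, using \eqref{eq:restrict} at each level; your argument is the unrolled, non-inductive version, which verifies the full inequality directly at each integer point of $\kp$ by locating the pivot index $\i=\max\{i\colon x_{i}\neq\theta_{i}\}$ via the lexicographic characterization of Proposition~\ref{prop:lexord2}. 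The one extra observation your route requires --- and which you supply --- is that when $\i>j$ the pivot automatically lies in the support $\I$ (since $0\le x_{\i}\le\theta_{\i}-1$ forces $\theta_{\i}\ge1$), so that the negative term $-\phi_{j}(\i)$ is actually present in the sum. Your version is arguably more transparent about \emph{why} the inequality is tight (equality holds precisely when $x_{j}=\ub_{j}$, $x_{i}=\ub_{i}$ for $i\in\Ij$ below the pivot, and $x_{\i}=\theta_{\i}-1$, foreshadowing Proposition~\ref{prop:facetpt3}); the paper's inductive framing has the advantage of mirroring the nested-restriction structure that the dynamic program and the convex-hull proof reuse later.
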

\begin{proof}
\green{For $j=n$, the inequality is simply $x_{n}\le\ub_{n}$ since $\Ij[n]=\emptyset,\ub_{n}=\theta_{n}$.} We prove validity for $j<n$ by induction on elements of $\Ij$. Denote $\S{j}:=\{x\in\kp\colon x_{k}=\theta_{k}\ \forall k\in\Ij\}$ for every $j\in N$. First we argue that $x_{j}  + (\ub_{j}-\theta_{j})(x_{\next{j}} - \theta_{\next{j}}) \le \theta_{j}$ is valid for $\S{\next{j}}$. For $x\in\S{\next{j}}$, equation~\eqref{eq:restrict} gives us $x_{\next{j}} \le \theta_{\next{j}}$. If $x_{\next{j}} = \theta_{\next{j}}$, then $x\in\S{j}$ and the inequality reduces to $x_{j} \le \theta_{j}$, which holds true by applying equation~\ref{eq:restrict} to $j$. Otherwise, $x_{\next{j}} \le \theta_{\next{j}} - 1$ and then $x_{j}  + (\ub_{j}-\theta_{j})(x_{\next{j}} - \theta_{\next{j}}) \le x_{j}-\ub_{j}+\theta_{j} \le \theta_{j}$.

Now we show that for any $k\in\Ij$, $x_{j}+\sum_{i\in\Ij:i \le k}\phi_{j}(i)(x_{i}-\theta_{i}) \le \theta_{j}$ is valid for $\S{k}$. From the previous claim, the result is true for $k=\next{j}$. Assume it to be true for some $k\in\Ij$ and consider the inequality for $\S{\next{k}}$.  For $x\in\S{\next{k}}$, equation~\ref{eq:restrict} gives us $x_{\next{k}} \le \theta_{\next{k}}$. If $x_{\next{k}} = \theta_{\next{k}}$, then $x\in\S{k}$ and the inequality reduces to $x_{j}+\sum_{i\in\Ij:i \le k}\phi_{j}(i)(x_{i}-\theta_{i}) \le \theta_{j}$, which is valid for $\S{k}$ from induction hypothesis. Otherwise, $x_{\next{k}} \le \theta_{\next{k}} - 1$ and then
\begin{equation*}
x_{j} - \theta_{j}+\sum_{i\in\Ij:i \le k}\phi_{j}(i)(x_{i}-\theta_{i}) + \phi_{j}(\next{k})(x_{\next{k}}-\theta_{\next{k}}) 
\:\le\: \ub_{j}-\theta_{j}  +\sum_{i\in\Ij:i \le k}\phi_{j}(i)(\ub_{i}-\theta_{i}) - \phi_{j}(\next{k})
\:=\: 0,
\end{equation*}
where the inequality is due to $\phi_{j}(\cdot)\ge 0$ and $x\le\ub$ and the equality follows from Observation~\ref{obs:phi}. This completes the induction process and our proof.
\end{proof}

\green{Since $\theta_{j}=\ub_{j}$ implies $\phi_{j}(i) = 0 \ \forall i\in\Ij$}, it follows that \eqref{packineq} reduces to $x_{j} \le \ub_{j}$ when $\theta_{j}=\ub_{j}$. Thus the only nontrivial packing inequalities are those corresponding to $\theta_{j} < \ub_{j}$. 


\addtocounter{examp}{-1}
\blue{\begin{examp}[continued]\label{examp1:cont}
Recall $\kp = \{x\in\ints^{5}_{+}\colon 2x_{1} + 8x_{2} + 46x_{3} + 150x_{4} + 310x_{4} \le 841, x \le (3,5,2,1,2)\}$ with $\theta=(0,3,1,1,2)$. We have $\phi_{1}(2)=3,\phi_{1}(3) = 9, \phi_{1}(4)=\phi_{1}(5)=18,\phi_{2}(3)=2,\phi_{2}(4)=\phi_{2}(5)=4,\phi_{3}(4)=\phi_{3}(5)=1,\phi_{4}(5)=0$. For $j\in\{1,2,3\}$, our maximal pack inequalities are 
\[
x_{1} + 3x_{2}+9x_{3}+18x_{4}+18x_{5}\le 72, \quad x_{2} + 2x_{3} + 4x_{4} + 4x_{5} \le 17, \quad x_{3} + x_{4} + x_{5} \le 4.
\]
The pack inequalities for $j=4,5$ are the upper bounds $x_{4}\le 1$ and $x_{5}\le 2$.\hfill\myqed
\end{examp}}

Under some additional assumptions on $\wt$ and $\ub$ along with the superincreasing property, one might be able to show that the packing inequality~\eqref{packineq} is a strengthened integer cover \green{or pack} inequality of \citet{atamturk2005cover}. Our proof of Proposition~\ref{prop:valid} is direct, self-contained \green{and motivated from the greedy solution}.

When $\ub = \onevec$, we argue that \eqref{packineq} reduces to a minimal cover of the $\bin$ superincreasing knapsack. \green{Since $\I = \{i\in N\colon \theta_{i}=1\}$, inequality \eqref{packineq} becomes $x_{j} \le 1$ for $j\in \I$}. For $j\notin\I$, we have \green{$\phi_{j}(i) = 1$ $\forall i\in\Ij$} and \eqref{packineq} becomes $x_{j} + \sum_{i\in\Ij}x_{i} \le |\Ij|$. The minimal covers can be obtained from \citep[Theorem 2.4]{laurent1992characterization}: this theorem provides a set of integers $\kappa_{1},\dots,\kappa_{q}$ for some $q\ge 1$ such that $\kappa_{q} = n$ and for any $i < q$, $\kappa_{i} := \max\{t < \kappa_{i+1}\colon \sum_{l=i+1}^{n}\wt_{\kappa_{l}}+\wt_{t} \le \rhs\}$. Proposition~\ref{prop:packing} gives us $\theta_{l}=1$ if and only if $\sum_{k=l+1}^{n}\wt_{k}\theta_{k} < \rhs$. Hence $\kappa_{i} = \max\{t < \kappa_{i+1} \colon 0 < \wt_{t} \le \rhs - \sum_{l=t+1}^{n}\wt_{l}\theta_{l}\} = \max\{t < \kappa_{i+1} \colon \theta_{t} = 1\}$. It follows that $\{\kappa_{1},\dots,\kappa_{q}\} = \I$. Theorem 2.5 in \citep{laurent1992characterization} states that any minimal cover is of the form $j \cup \{\kappa_{i} \colon \kappa_{i} > j \} = j \cup \Ij$, for some $j \notin \{\kappa_{1},\dots,\kappa_{q}\} = \I$. Thus, the minimal cover inequalities are of the form $x_{j} + \sum_{i\in\Ij}x_{i} \le |\Ij|, \: \forall j\notin\I$, which is exactly the same as the packing inequalities.

The next result proves that the packing inequalities can be used to reformulate $\kp$. \green{Later on in Theorem~\ref{thm:conv}, we will prove that they also give an ideal formulation for $\kp$}.
\begin{prop}\label{prop:reform}
$\kp = \left\{x\in\rect \colon x_{j} + \sum_{i\in\Ij}\phi_{j}(i)(x_{i}-\theta_{i}) \, \le \, \theta_{j}, \; j=1,\dots,n\right\}$.
\end{prop}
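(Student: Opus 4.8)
The plan is to prove the two inclusions separately, the forward one being immediate and the reverse reducing to a one-line lexicographic argument. The inclusion $\kp \subseteq \{x\in\rect \colon x_{j} + \sum_{i\in\Ij}\phi_{j}(i)(x_{i}-\theta_{i}) \le \theta_{j},\ j=1,\dots,n\}$ is nothing but the validity of the packing inequalities established in Proposition~\ref{prop:valid}, combined with the trivial containment $\kp\subseteq\rect$.

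For the reverse inclusion I would invoke Proposition~\ref{prop:lexord2}, which identifies $\kp$ with $\{x\in\rect\colon x\preceq\theta\}$. It therefore suffices to show that every $x\in\rect$ satisfying all $n$ packing inequalities obeys $x\preceq\theta$, and I would argue this by contraposition: assuming $x\in\rect$ with $x\succ\theta$, I exhibit a single violated packing inequality. Let $\i := \max\{i\in N\colon x_{i}\neq\theta_{i}\}$. Because $x\succ\theta$ and both vectors are integral, the definition of lexicographic order forces $x_{\i}\ge\theta_{\i}+1$, while maximality of $\i$ gives $x_{i}=\theta_{i}$ for every $i>\i$.

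The crux is the choice $j=\i$ in \eqref{packineq}. Since $\Ij[\i]=\{i\in\I\colon i>\i\}$ consists only of indices exceeding $\i$, each summand carries the factor $x_{i}-\theta_{i}=0$, so the packing inequality indexed by $\i$ collapses to $x_{\i}\le\theta_{\i}$, contradicting $x_{\i}\ge\theta_{\i}+1$. Hence $x$ violates a packing inequality and cannot lie in the right-hand set, which completes the contrapositive and thus the equality. (Note $\i<n$ automatically: since $n\in\I$ and $\theta_{n}=\ub_{n}$, the bound $x_{n}\le\ub_{n}$ forces $x_{n}=\theta_{n}$, so $\phi_{\i}$ is well defined.)

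I do not anticipate a genuine obstacle here: the entire content is the observation that selecting the top coordinate $\i$ at which $x$ and $\theta$ differ annihilates the whole tail $\sum_{i\in\Ij[\i]}\phi_{\i}(i)(x_{i}-\theta_{i})$, reducing the corresponding packing inequality to a plain coordinate bound. The only two points demanding any care are confirming that $\i\neq n$ (so the inequality is not vacuous) and recalling that the set in question consists of integer points, so that $x\succ\theta$ yields the unit gap $x_{\i}\ge\theta_{\i}+1$ rather than a fractional discrepancy.
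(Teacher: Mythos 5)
Your proof is correct and takes essentially the same approach as the paper's: the forward inclusion is the validity of Proposition~\ref{prop:valid}, and the reverse inclusion rests on the observation that the packing inequality indexed by the largest coordinate at which $x$ and $\theta$ differ has a vanishing sum and collapses to a plain coordinate bound, after which Proposition~\ref{prop:lexord2} finishes the job. The paper phrases this step directly (deducing $x\preceq\theta$ and hence $x\in\kp$) rather than by contraposition, but the argument is identical.
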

\begin{proof}
\renewcommand{\gamma}{x}
The forward inclusion $(\subseteq)$ is obvious due to the validity of inequalities~\eqref{packineq}. Consider $\gamma\in\rect$ satisfying all the inequalities~\eqref{packineq}. Clearly, $\gamma=\theta$ is a valid choice that belongs to $\kp$. Suppose that $\gamma\neq\theta$ and define $\i := \max\{i\in N\colon \gamma_{i}\neq\theta_{i}\}$. Since $\gamma$ satisfies the inequality for $j=\i$, $\gamma_{i}=\theta_{i}$ $\forall i > \i$ implies  $\gamma_{\i} \le \theta_{\i}$. Then $\gamma_{\i}\neq\theta_{\i}$ gives us $\gamma_{\i}\le\theta_{\i}-1$ and $\gamma\preceq\theta$. Finally, Proposition~\ref{prop:lexord2} leads to $\gamma\in\kp$.
\end{proof}

\renewcommand{\ite}{i_{t}}
\renewcommand{\face}[1][j]{F_{#1}}

We now show that the packing inequalities also define nontrivial facets of $\cokp$. \green{To aid our arguments}, for every $j\in N$, we define $\xi_{j}\colon\Re^{n}\mapsto\Re$ as 
\[
\xi_{j}(x) := x_{j} - \theta_{j} + \sum_{i\in\Ij}\phi_{j}(i)(x_{i}-\theta_{i}),
\]
\green{and the face defined by this inequality is $\face := \{x\in\cokp\colon \xi_{j}(x) = 0\}$. The integer points on this face have the following properties.}

{
\renewcommand{\ite}{i}
\begin{prop}\label{prop:facetpt3}
Let $\x\in\rect$ be such that for some $j\in N$ with $\theta_{j} < \ub_{j}$ and $\ite\in\Ij$, we have $x_{k} = \theta_{k}$ for all $k\in\Ij$ with $k > \ite$.
\begin{enumerate}
\item If $\x_{j}=\ub_{j}$, $\x_{k}=\ub_{k}$ for all $k\in\Ij$ with $k < \ite$ and $\x_{\ite} = \theta_{\ite}-1$, then $\x \in \face$. 
\item \blue{If $\x_{\ite} \le \theta_{\ite}-2$, then $\x \notin \face$.}
\end{enumerate}
\end{prop}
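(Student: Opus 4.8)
The plan is to evaluate $\xi_j(\x)$ explicitly in both cases, collapsing the partial expression $\ub_j-\theta_j+\sum_{k\in\Ij\colon k<\ite}\phi_j(k)(\ub_k-\theta_k)$ into the single term $\phi_j(\ite)$ by means of the recursion in Observation~\ref{obs:phi}. This telescoping identity is the common engine behind both parts, and keeping the index bookkeeping straight is essentially the only calculation.

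For part~(1), I would split the sum defining $\xi_j(\x)$ over $\Ij$ into the indices above $\ite$, the index $\ite$ itself, and the indices below $\ite$. The terms with $k>\ite$ vanish because $\x_k=\theta_k$ there; the term at $\ite$ contributes $-\phi_j(\ite)$ since $\x_{\ite}=\theta_{\ite}-1$; and the remaining terms, together with $\x_j-\theta_j=\ub_j-\theta_j$, sum to $(\ub_j-\theta_j)+\sum_{k\in\Ij\colon k<\ite}\phi_j(k)(\ub_k-\theta_k)=\phi_j(\ite)$ by Observation~\ref{obs:phi}. Hence $\xi_j(\x)=\phi_j(\ite)-\phi_j(\ite)=0$. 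It then remains to certify that $\x$ actually lies in $\cokp$, not merely on the hyperplane $\xi_j=0$: since $\x$ is integral and lies in $\rect$, by Proposition~\ref{prop:lexord2} it suffices to check $\x\preceq\theta$. As $\x$ agrees with $\theta$ on every coordinate above $\ite$ and $\x_{\ite}=\theta_{\ite}-1<\theta_{\ite}$, the largest coordinate at which $\x$ and $\theta$ differ is $\ite$, where $\x_{\ite}<\theta_{\ite}$; thus $\x\prec\theta$, so $\x\in\kp\subseteq\cokp$ and therefore $\x\in\face$.

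For part~(2), I would bound $\xi_j(\x)$ from above using only $\x\in\rect$ and the stated hypotheses. Again the terms with $k>\ite$ vanish; for $k<\ite$ I use $\x_k\le\ub_k$ together with $\phi_j(k)\ge 0$ to replace $\x_k$ by $\ub_k$, and likewise $\x_j\le\ub_j$; at $\ite$ the hypothesis $\x_{\ite}\le\theta_{\ite}-2$ gives a contribution at most $-2\phi_j(\ite)$. Collapsing the leading part via Observation~\ref{obs:phi} exactly as above yields $\xi_j(\x)\le\phi_j(\ite)-2\phi_j(\ite)=-\phi_j(\ite)$. Since $\theta_j<\ub_j$ forces $\ub_j-\theta_j\ge 1$ and each factor $\ub_k+1-\theta_k\ge 1$, we have $\phi_j(\ite)>0$, so $\xi_j(\x)<0$. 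Because membership in $\face$ requires $\xi_j(\x)=0$, this gives $\x\notin\face$; note that this conclusion needs no assumption on whether $\x\in\cokp$.

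The genuine subtlety lies only in part~(1): establishing $\xi_j(\x)=0$ places $\x$ on the supporting hyperplane but does not by itself put $\x$ inside $\cokp$, so the lexicographic membership check through Proposition~\ref{prop:lexord2} is indispensable, and it is here that one must use that $\x$ coincides with $\theta$ on \emph{all} coordinates exceeding $\ite$ (the non-support ones included) rather than only on $\Ij$. The two evaluations of $\xi_j$ are otherwise routine once Observation~\ref{obs:phi} is invoked to telescope the partial sum.
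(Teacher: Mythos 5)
Your proof is correct and follows essentially the same route as the paper: both parts come down to evaluating $\xi_{j}(\x)$ term by term and collapsing $\ub_{j}-\theta_{j}+\sum_{k\in\Ij\colon k<i}\phi_{j}(k)(\ub_{k}-\theta_{k})$ into $\phi_{j}(i)$ via Observation~\ref{obs:phi}, with the bound $-\phi_{j}(i)<0$ in part (2). The one addition is your explicit check that $\x\in\conv{\kp}$ in part (1); the paper's proof verifies only $\xi_{j}(\x)=0$ and leaves the lexicographic membership $\x\preceq\theta$ to be confirmed where the proposition is invoked (as in the proof of Proposition~\ref{prop:facet}), and your remark that this step requires $\x$ to agree with $\theta$ on \emph{all} coordinates above $i$ --- not merely those in $\Ij$, which is all the stated hypothesis fixes --- correctly identifies the extra condition under which part (1) is actually applied.
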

\begin{proof}
For the first part, $\xi_{j}(\x)  = \ub_{j}-\theta_{j} + \sum_{k\in\Ij\colon k<\ite}\phi_{j}(k)(\ub_{k}-\theta_{k}) -\phi_{j}(\ite)
 =  0$, where the equality is due to Observation~\ref{obs:phi}. \blue{Now suppose that $\x_{\ite}\le\theta_{\ite}-2$. Then 
\[
\xi_{j}(\x) \le x_{j}-\theta_{j} + \sum_{k\in\Ij\colon k < \ite}\phi_{j}(k)(x_{k}-\theta_{k}) - 2 \phi_{j}(\ite)
\le \ub_{j}-\theta_{j} + \sum_{k\in\Ij\colon k < \ite}\phi_{j}(k)(\ub_{k}-\theta_{k}) - 2 \phi_{j}(\ite)
 = -\phi_{j}(\ite),
\]
where the last equality is due to Observation~\ref{obs:phi}. Now $\theta_{j}<\ub_{j} \implies \phi_{j}(\ite) > 0 \implies \xi_{j}(\x) < 0$}.
\end{proof}
\blue{
Choosing $\ite=n$ in Proposition~\ref{prop:facetpt3} yields the following inclusion that will be useful later in \textsection\ref{sec:case1}:}
\begin{equation}\label{facetextpts}\blue{
\face\cap\ints^{n} \subseteq \{x\in\rect\colon x_{n}\in\{\theta_{n}-1,\theta_{n}\}\} \qquad \forall j\in N \text{ such that } \theta_{j}<\ub_{j}.}
\end{equation}
}
\begin{prop}[Facets]\label{prop:facet}
For any $j\in N$ with $\theta_{j} < \ub_{j}$, inequality~\eqref{packineq} is facet-defining to $\cokp$.
\end{prop}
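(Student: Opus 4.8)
The plan is to show that $\cokp$ is full-dimensional and then exhibit $n$ affinely independent integer points of $\kp$ lying on the face $\face$; since $\face$ is a proper face (for instance $\zerovec\notin\face$, because $n\in\Ij$ together with $\theta_n=\ub_n\ge1$ and $\phi_{j}(n)>0$ force $\xi_{j}(\zerovec)<0$), this gives $\dim\face=n-1$, i.e.\ inequality~\eqref{packineq} is facet-defining. Full-dimensionality is immediate from the $n+1$ points $\zerovec,\onevec[1],\dots,\onevec[n]$: each $\onevec[i]\prec\theta$ because the highest index at which $\onevec[i]$ and $\theta$ differ is reduced there (using $\theta_n=\ub_n$), so by Proposition~\ref{prop:lexord2} all lie in $\kp$, and they are affinely independent.

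The construction of the face points rests on two observations. First, $\xi_{j}(x)$ depends only on the coordinates in $\{j\}\cup\Ij$, so every coordinate $k\notin\{j\}\cup\Ij$ is \emph{free}: perturbing it leaves $\xi_{j}$ unchanged. Second, since $\theta_n=\ub_n$ (the footnote to Definition~\ref{defn:I}), any integer point with $x_n=\theta_n-1$ satisfies $x\prec\theta$ irrespective of its remaining coordinates, hence lies in $\kp$ by Proposition~\ref{prop:lexord2}. Writing $\Ij=\{m_1<\cdots<m_s=n\}$, I use Proposition~\ref{prop:facetpt3}(1) to define, for each $i\in\Ij$, the face point $q^{i}$ with $q^{i}_{j}=\ub_{j}$, $q^{i}_{k}=\ub_{k}$ for $k\in\Ij$ with $k<i$, $q^{i}_{i}=\theta_{i}-1$, $q^{i}_{k}=\theta_{k}$ for $k\in\Ij$ with $k>i$, and $q^{i}_{k}=0$ on all free coordinates; each $q^{i}\in\kp$ since its highest differing coordinate from $\theta$ is $i$ and is reduced there.

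The $n$ points are then the family $\{q^{i}:i\in\Ij\}$ ($s$ points), the greedy solution $\theta$, and the $n-1-s$ perturbations $q^{n}+\onevec[k]$ over the free coordinates $k\notin\{j\}\cup\Ij$ (each lies in $\kp$ because $x_n=\theta_n-1$, and on $\face$ because $k$ is free). Taking $q^{n}$ as the base, the difference vectors split cleanly: the $s-1$ directions $q^{m_t}-q^{n}$ are supported on $\Ij$ and, ordered by $t$, have strictly increasing first-nonzero index $m_t$, hence are independent; the directions $\onevec[k]$ are supported on disjoint free coordinates; and $\theta-q^{n}$ is the unique direction with a nonzero entry in coordinate $j$ (namely $\theta_{j}-\ub_{j}\neq0$). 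Inspecting column $j$ forces the coefficient of $\theta-q^{n}$ to vanish, after which the disjoint supports ($\Ij$ versus the free coordinates) give independence of the remainder, so all $n$ points are affinely independent.

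I expect the main obstacle to be the affine-independence bookkeeping rather than any single hard idea: one must choose the free entries of every point consistently (here, all $0$ except the single perturbed coordinate) so that the difference vectors acquire the triangular/disjoint-support structure above, and one must separately verify that each constructed point actually lies in $\kp$. This last point is exactly where the reduction $x_n=\theta_n-1$ does the work, since it decouples membership in $\kp$ from the values of all other coordinates, matching the inclusion in~\eqref{facetextpts}. The degenerate case $s=1$ (i.e.\ $\Ij=\{n\}$) is covered by the same count $1+1+(n-2)=n$.
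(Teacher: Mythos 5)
Your proposal is correct, and while it follows the paper's overall strategy --- exhibit $n$ affinely independent integer points of $\kp$ lying on $\face$, most of them being the ``staircase'' points certified by Proposition~\ref{prop:facetpt3}(1) --- your choice of the $n$ points is a genuinely different decomposition. The paper takes a truncated copy of $\theta$ (zeroed below $j$) and distributes the $n-1-|\Ij|$ unit perturbations over \emph{different} staircase points: perturbations at coordinates $l<j$ are attached to the point for $\next{j}$ (Type 2), and a perturbation at a free coordinate $l$ with $\prev{i}<l<i$ is attached to the point for $i$ (Type 3(b)); affine independence is then verified by a row-by-row elimination of the weights $\lambda$ in two inductive claims. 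You instead take $\theta$ itself, the staircase points $q^{i}$ with all coordinates outside $\{j\}\cup\Ij$ set to $0$, and anchor \emph{all} perturbations at the single point $q^{n}$ --- legitimate precisely because $x_{n}=\theta_{n}-1$ already forces $x\prec\theta$ (the same fact behind~\eqref{facetextpts}), so feasibility is decoupled from the perturbed coordinate, and because $\xi_{j}$ ignores that coordinate, so the point stays on the face. This buys you difference vectors with a transparent triangular/disjoint-support structure ($q^{m_t}-q^{n}$ supported on $\Ij$ with leading index $m_t$, the $\onevec[k]$ on distinct free coordinates, and $\theta-q^{n}$ alone nonzero in coordinate $j$), so independence is a two-line inspection rather than an induction. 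You also make explicit two steps the paper leaves implicit --- full-dimensionality of $\cokp$ via $\zerovec,\onevec[1],\dots,\onevec[n]$ and properness of the face via $\xi_{j}(\zerovec)<0$ --- which are needed to pass from ``$n$ affinely independent points on a valid face'' to ``facet.'' All the individual claims check out: each $q^{i}$ agrees with $\theta$ above coordinate $i$ (using $\theta_{k}=0$ for $k>j$, $k\notin\Ij$) and is reduced at $i$, so Proposition~\ref{prop:lexord2} gives $q^{i}\in\kp$.
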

\begin{proof}
For $j\in N$ with $\theta_{j}<\ub_{j}$, we construct $n$ affinely independent points of $\kp$ that belong to $\face\cap\ints^{n}$. These $n$ points can be divided into three categories.
\begin{enumerate}
\item Fix $\x = (0,\ldots,0,\theta_{j}, \theta_{j+1},\ldots,\theta_{n})\preceq\theta$. Clearly $\xi(\x) = 0$. 

\item Fix $\x = (\onevec[l],\ub_{j},0,\ldots,0,\theta_{\next{j}}-1,\theta_{\next{j}+1},\ldots,\theta_{n})\preceq\theta$ for some $l < j$. Here $\xi(\x) = \ub_{j}-\theta_{j} - (\ub_{j}-\theta_{j})= 0$. 

\renewcommand{\ite}{i}
\item Choose $\ite\in\Ij$. There are two subtypes here: (a) fix $\x = (\zerovec,\ub_{j},\ldots,\ub_{\prev{\ite}},\zerovec, \theta_{\ite}-1,\theta_{\ite+1},\ldots,\theta_{n})$, (b) for some $l$ such that $\max\{j,\prev{\ite}\} < l <\ite$, fix $\x = (\zerovec,\ub_{j},\ldots,\ub_{\prev{\ite}},\onevec[l], \theta_{\ite}-1,\theta_{\ite+1},\ldots,\theta_{n})$. Both these points satisfy $\x\preceq\theta$ by construction and are in $\face$ due to Proposition~\ref{prop:facetpt3}.
\end{enumerate}
\newcommand{\M}{M}
We have constructed a total of $1 + j-1 + \next{j} - j + \sum_{i\in\Ij\colon i > \next{j}}(i - \prev{i}) = n$ points in $\face$. Suppose that these $n$ points form $n$ columns of a matrix $\M$ in a way that the columns are sorted as Type 1, then Type 2, and then Type 3 (first all points of subtype (a) and then all of subtype (b)). Let there exist some weights $\lambda_{1},\dots,\lambda_{n}$ such that $\M\lambda = \zerovec, \onevec^{\top}\lambda=0$.

\begin{claim}\label{claim:facet1}
$\lambda_{p} = 0$ for all $p\notin\{j+1,\dots,j+|\Ij|\}$. Consider the $l^{th}$ row of $\M$ for some $1\le l < j$. There is exactly one column of $\M$, corresponding to a Type 2 point, that contains a nonzero entry in row $l$. Hence $\lambda_{2}=\dots=\lambda_{j}=0$. Next consider some $k \notin\I\colon k > j$. There is exactly one column of $\M$, corresponding to a Type 3 subtype (b) point, that contains a nonzero entry in row $k$. Hence all the $\lambda$'s for Type 3 subtype (b) columns are zero, i.e. $\lambda_{j+|\Ij|+1}=\dots=\lambda_{n}=0$. The only remaining nonzero values are for $\lambda_{1},\lambda_{j+1},\dots,\lambda_{j+|\Ij|}$, which must sum to zero. Consider the $j^{th}$ row. Exactly one column of $\M$ has entry $\theta_{j}$ in row $j$ (Type 1 point) while all other columns have entry $\ub_{j}$. This gives us $\lambda_{1}\theta_{j} + \ub_{j}\sum_{p=j+1}^{j+|\Ij|}\lambda_{p} = 0$. Since $\lambda_{1} + \sum_{p=j+1}^{j+|\Ij|}\lambda_{p} = 0$ and $\theta_{j} < \ub_{j}$ by assumption, it follows that $\lambda_{1} = \sum_{p=j+1}^{j+|\Ij|}\lambda_{p} = 0$.~\myqed
\end{claim}

\begin{claim}\label{claim:facet2}
$\lambda=\zerovec$. Consider the $\next{j}^{th}$ row in $\M$. The first Type 3 point has an entry $\theta_{\next{j}}-1$ in this row while all other Type 3 points have an entry of $\ub_{\next{j}}$. The two equalities $\lambda_{j+1}(\theta_{\next{j}}-1) +  \sum_{p=j+2}^{j+|\Ij|}\lambda_{p}\ub_{\next{j}} = 0$ and $\sum_{p=j+1}^{j+|\Ij|}\lambda_{p} = 0$ imply $\lambda_{j+1}(\theta_{\next{j}} - 1 - \ub_{\next{j}}) = 0$, thereby giving us $\lambda_{j+1}=0$ since $ \theta_{\next{j}} \le \ub_{\next{j}}$. Now let $\ite\in\Ij\setminus \{\next{j}\}$. Let the Type 3 subtype (a) point corresponding to $\ite$ be in the $(j+t)^{th}$ column of $\M$ with the associated weight $\lambda_{j+t}$. Assume as part of induction hypothesis that $\lambda_{j+1} = \dots = \lambda_{j+t-1} = 0$. We argue that $\lambda_{j+t}=0$. Observe that the entry for the $\ite^{th}$ row of $\M$ in columns $j+t,j+t+1,\dots,j+|\Ij|$ is $\theta_{\ite}-1,\ub_{\ite},\dots,\ub_{\ite}$, respectively. Upon using the induction hypothesis and $\lambda_{j+t}+\sum_{p=j+t+1}^{j+|\Ij|}\lambda_{p}=0$ in $(j+t)^{th}$ row of $\M\lambda=\zerovec$, we get $\lambda_{j+t}(\theta_{\ite}-1-\ub_{\ite}) = 0$, thereby giving us $\lambda_{j+t}=0$ since $ \theta_{\ite} \le \ub_{\ite}$. This completes the induction process and we have $\lambda_{p}=0, p =j+1,\dots,j+|\Ij|$. Finally, $\lambda=\zerovec$ follows from Claim~\ref{claim:facet1}.\qquad\myqed
\end{claim}

We have shown in Claim~\ref{claim:facet2} that $\lambda=\zerovec$ is the only possible solution to $\M\lambda=\zerovec, \onevec^{\top}\lambda=0$. Hence the $n$ points constructed above are affinely independent and $\face$ is a facet of $\cokp$.
\end{proof}

\renewcommand{\ite}[1][t]{j}
\green{Having shown that the packing inequalities~\eqref{packineq} are strong valid inequalities for $\cokp$, we now prove in Theorem~\ref{thm:conv} that $\cokp$ does not have any other nontrivial facets. Our proof uses the dynamic program of Proposition~\ref{prop:dp} and Figure~\ref{fig:dp}. For $j\in\I$, recall $\ell_{j}$, a subset of feasible solutions at the leaf child of $j$, from equation \eqref{leafnode}. We know from equation \eqref{optsol} that optimal solutions can only be found at leaves of the tree in Figure~\ref{fig:dp}}. If an optimal solution occurs at leaf $\ell_{j}$, i.e. $\ell_{j}\cap\Opt\neq\emptyset$, we say that $j$ is an optimal non-leaf node that is parent to the optimal leaf $\ell_{j}$. While comparing two leaves $\ell_{i}$ and $\ell_{i^{\prime}}$, we say that $\ell_{i}$ is \emph{larger} than $\ell_{i^{\prime}}$ if and only if $i > i^{\prime}$.

\green{Before proving Theorem~\ref{thm:conv}, we present some useful characterizations of the optimal solutions of this dynamic program that will be invoked at multiple points in our proof. We will need the following notation}: let the optimal value in Proposition~\ref{prop:dp} be stated as $\val[\ite] = \max\{{\val[\ite]}^{-}, {\val[\ite]}^{+}\} \ \forall j\in\I$, where
\[
{\val[\ite]}^{-} = [c_{\ite}]^{+}(\theta_{\ite}-1) + \sum_{i=1}^{\ite-1}[c_{i}]^{+}\ub_{i}, \quad {\val[\ite]}^{+} = c_{\ite}\theta_{\ite} + \val[\prev{j}].
\]
It follows that ${\val[\ite]}^{-} + \sum_{i\in\Ij}c_{i}\theta_{i} = \max\{c^{\top}x \colon x \in \ell_{j}\}$. \green{Observe that since $x\le\ub$, then $c_{\ite} = 0$ for some $\ite\in\I$ implies that ${\val[\ite]}^{-} \ge {\val[\ite]}^{+}$. The next two observations are straightforward from the dynamic program of Figure~\ref{fig:dp}.}

\begin{obs}\label{obs:negcost}
For any $\ite\in\I$, $\ell_{\ite}\cap\L\neq\emptyset \blue{\implies} {\val[\ite]}^{-} \ge {\val[\ite]}^{+}$ and ${\val[\ite]}^{-} > {\val[\ite]}^{+} \implies \L \subseteq \ell_{\ite}\cup\bigcup_{i\in\Ij[\ite]}\ell_{i}$.
\end{obs}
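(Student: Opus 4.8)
The plan is to read both implications directly off the binary tree of Figure~\ref{fig:dp} by tracking, for each non-leaf node $j\in\I$, the best full objective value available in the subtree rooted at $j$. Set $V_{j} := \val[j] + \sum_{i\in\Ij}c_{i}\theta_{i}$. First I would record that $V_{j}=\max\{c^{\top}x\colon x\in\kp,\ x_{k}=\theta_{k}\ \forall k\in\Ij\}$: for any such $x$, inclusion~\eqref{eq:restrict} forces $x_{i}=0=\theta_{i}$ for every $i\in\{j+1,\dots,n\}\setminus\Ij$, so $\sum_{i>j}c_{i}x_{i}=\sum_{i\in\Ij}c_{i}\theta_{i}$ is constant and maximizing $c^{\top}x$ agrees with maximizing $\sum_{i=1}^{j}c_{i}x_{i}$. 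In particular $V_{n}=\val[n]$ is the optimal value and $V_{j}\le V_{n}$ for all $j\in\I$, each $V_{j}$ being a maximum over a subset of $\kp$. Adding the constant $\sum_{i\in\Ij}c_{i}\theta_{i}$ to the recursion of Proposition~\ref{prop:dp} and using $\I_{\prev{j}}=\{j\}\cup\Ij$ splits this as
\[
V_{j}=\max\bigl\{\,\max\{c^{\top}x\colon x\in\ell_{j}\},\ V_{\prev{j}}\,\bigr\},
\]
where the first term ${\val[j]}^{-}+\sum_{i\in\Ij}c_{i}\theta_{i}$ is the value of the leaf child $\ell_{j}$ and the second ${\val[j]}^{+}+\sum_{i\in\Ij}c_{i}\theta_{i}=V_{\prev{j}}$ is the value of the subtree below $j$, whose leaves are $\ell_{0}=\{\theta\}$ and the $\ell_{i}$ with $i\in\I,\ i<j$.

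For the first implication, suppose $x^{*}\in\ell_{j}\cap\L$. Since $\ell_{j}\subseteq\kp$ (its points are $\preceq\theta$, hence in $\kp$ by Proposition~\ref{prop:lexord2}), we get $\max\{c^{\top}x\colon x\in\ell_{j}\}=c^{\top}x^{*}=V_{n}$. As $V_{\prev{j}}\le V_{n}$, the leaf value is at least $V_{\prev{j}}$, and subtracting the common constant $\sum_{i\in\Ij}c_{i}\theta_{i}$ gives ${\val[j]}^{-}\ge{\val[j]}^{+}$.

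For the second implication, ${\val[j]}^{-}>{\val[j]}^{+}$ says the leaf child strictly beats the subtree below $j$, so $V_{\prev{j}}<\max\{c^{\top}x\colon x\in\ell_{j}\}\le V_{j}\le V_{n}$. Thus no point of value $V_{n}$ lies in that subtree, i.e.\ $\L$ avoids $\ell_{0}=\{\theta\}$ and every $\ell_{i}$ with $i\in\I,\ i<j$. Intersecting this with the leaf containment~\eqref{optsol}, namely $\L\subseteq\{\theta\}\cup\bigcup_{i\in\I}\ell_{i}$, removes exactly these sets and leaves $\L\subseteq\ell_{j}\cup\bigcup_{i\in\Ij}\ell_{i}$.

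The bookkeeping is the only real work here, and the single step to get exactly right is the identity $\I_{\prev{j}}=\{j\}\cup\Ij$: it is what turns the abstract recursion ${\val[j]}=\max\{{\val[j]}^{-},{\val[j]}^{+}\}$ into the subtree split above and makes ${\val[j]}^{+}+\sum_{i\in\Ij}c_{i}\theta_{i}$ coincide with $V_{\prev{j}}$. The remaining ingredients are just monotonicity of maxima over nested feasible sets together with the containment~\eqref{optsol}.
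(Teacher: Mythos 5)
Your proof is correct and matches the paper's intent: the paper offers no explicit argument, declaring the observation ``straightforward from the dynamic program of Figure~\ref{fig:dp},'' and your bookkeeping with $V_{j}$, the identity $\I_{\prev{j}}=\{j\}\cup\Ij$, and the containment~\eqref{optsol} is exactly the rigorous version of that reading of the tree. Both implications are verified soundly, including the edge case $j=i_{1}$ where the subtree below reduces to $\ell_{0}=\{\theta\}$.
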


\begin{obs}\label{obs:leafval}
Let $j\in\I$ and consider $x\in\ell_{j}\cap\L$. Then for any $i< j$, we have \emph{(i)} $x_{i}=\ub_{i}$ if $c_{i} > 0$, \emph{(ii)} $x_{i}=0$ if $c_{i} < 0$, and \emph{(iii)} $x_{i}$ is unrestricted if $c_{i}=0$.
\end{obs}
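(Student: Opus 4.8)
The plan is to reduce the claim to a coordinate-wise exchange inside the leaf set $\ell_{j}$, after first confirming that every point of $\ell_{j}$ is feasible. From the description \eqref{leafnode}, any $x\in\ell_{j}$ has $x_{i}=\theta_{i}$ for $i>j$ and $x_{j}\le\theta_{j}-1<\theta_{j}$, so the largest index in which $x$ and $\theta$ differ is $j$, where $x_{j}<\theta_{j}$; hence $x\prec\theta$. Since also $x\in\rect$ (as $x_{i}\le\ub_{i}$ for every $i$), Proposition~\ref{prop:lexord2} gives $x\in\kp$, and therefore $\ell_{j}\subseteq\kp$.

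The structural feature I would extract from \eqref{leafnode} is that for $i<j$ the coordinate $x_{i}$ ranges freely over $\{0,\ub_{i}\}$ independently of all other coordinates: replacing a single $x_{i}$ ($i<j$) by the other admissible value in $\{0,\ub_{i}\}$ keeps the point inside $\ell_{j}$, hence inside $\kp$. I would then observe that since $x\in\ell_{j}\cap\L$ is optimal over $\kp\supseteq\ell_{j}$, it is in particular optimal over $\ell_{j}$, i.e. $c^{\top}x=\max\{c^{\top}z\colon z\in\ell_{j}\}$.

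Because the objective is separable, I would argue one coordinate $i<j$ at a time. If $c_{i}>0$ but $x_{i}=0$, flipping $x_{i}$ to $\ub_{i}$ yields a point of $\ell_{j}$ whose objective exceeds $c^{\top}x$ by $c_{i}\ub_{i}>0$ (using $\ub_{i}\ge 1$), contradicting optimality; hence $x_{i}=\ub_{i}$, which is (i). Symmetrically, if $c_{i}<0$ and $x_{i}=\ub_{i}$, flipping $x_{i}$ to $0$ raises the objective by $-c_{i}\ub_{i}>0$, again a contradiction, so $x_{i}=0$, giving (ii). When $c_{i}=0$ both admissible values of $x_{i}$ leave the objective unchanged, so $x_{i}$ is unrestricted, which is (iii). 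This is exactly the content of the term $\sum_{i=1}^{j-1}[c_{i}]^{+}\ub_{i}$ appearing in the leaf value $[c_{j}]^{+}(\theta_{j}-1)+\sum_{i=1}^{j-1}[c_{i}]^{+}\ub_{i}$ of Proposition~\ref{prop:dp}.

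There is essentially no obstacle in this argument; the only point that genuinely uses the superincreasing structure is the feasibility statement $\ell_{j}\subseteq\kp$, which legitimizes every exchanged point and is where Proposition~\ref{prop:lexord2} enters. Once feasibility and the coordinate independence of \eqref{leafnode} are in hand, the conclusion follows from a single exchange per coordinate.
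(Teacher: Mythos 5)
Your proof is correct and matches the paper's (implicit) justification: the paper simply declares Observation~\ref{obs:leafval} ``straightforward from the dynamic program,'' and your feasibility check ($\ell_{j}\subseteq\kp$ via Proposition~\ref{prop:lexord2}) followed by a coordinate-wise exchange inside $\ell_{j}$ is exactly the argument that underlies the term $\sum_{i<j}[c_{i}]^{+}\ub_{i}$ in the leaf value of Proposition~\ref{prop:dp}. No gaps.
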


\green{The next result gives a sufficient condition for \emph{all} the optimal solutions to lie in the facet defined by the $j^{th}$ packing inequality.}
\begin{lem}\label{lem:face}
Let $j\in N$ be such that $c_{j} > 0$, $c_{t} > 0 \ \forall t\in\Ij$ and if $j\in\I$, we also have $\ell_{j}\cap\L=\emptyset$. Then $\Opt\subseteq\face$. 
\end{lem}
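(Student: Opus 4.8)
The plan is to reduce the statement to a finite check over the leaves of the dynamic-programming tree. By \eqref{optsol} every optimal point lies in $\{\theta\}\cup\bigcup_{j'\in\I}\ell_{j'}$, and since $\xi_j(\theta)=\theta_j-\theta_j+\sum_{i\in\Ij}\phi_j(i)(\theta_i-\theta_i)=0$ trivially, it suffices to show $\xi_j(x^{*})=0$ for every $x^{*}\in\Opt$ lying in some leaf $\ell_{j'}$. I would organize the argument by comparing $j'$ with $j$.

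First I would dispatch the two easy cases. If $j'<j$, then by the definition of $\ell_{j'}$ in \eqref{leafnode} every coordinate above index $j'$ is frozen at $\theta$; in particular $x^{*}_j=\theta_j$, and $x^{*}_i=\theta_i$ for all $i\in\Ij$ (each such $i$ satisfies $i>j>j'$), so $\xi_j$ vanishes on the entire leaf. If $j'=j$ --- which can occur only when $j\in\I$ --- the hypothesis $\ell_j\cap\Opt=\emptyset$ removes this leaf from consideration, so no optimal point sits there. This is precisely the point at which the extra assumption in the statement is used; when $j\notin\I$ the leaf $\ell_j$ does not exist and the assumption is vacuous.

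The substance is the remaining case $j'>j$, i.e. $j'\in\Ij$. Here I would first observe that since $x^{*}\in\ell_{j'}$ is globally optimal it also maximizes $c^{\top}x$ over $\ell_{j'}$, and because $c_{j'}>0$ this forces $x^{*}_{j'}=\theta_{j'}-1$ (the larger of the two admissible values $\{0,\theta_{j'}-1\}$, the two coinciding in the degenerate subcase $\theta_{j'}=1$). Next, Observation~\ref{obs:leafval} together with $c_j>0$ and $c_i>0$ for all $i\in\Ij$ pins the relevant lower-index coordinates to their upper bounds: $x^{*}_j=\ub_j$ and $x^{*}_i=\ub_i$ for every $i\in\Ij$ with $i<j'$, while $x^{*}_i=\theta_i$ for $i\in\Ij$ with $i>j'$. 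Substituting these values into $\xi_j(x^{*})$ and invoking the telescoping identity of Observation~\ref{obs:phi}, the block $\ub_j-\theta_j+\sum_{i\in\Ij,\,i<j'}\phi_j(i)(\ub_i-\theta_i)$ collapses to $\phi_j(j')$, leaving $\xi_j(x^{*})=\phi_j(j')\,(x^{*}_{j'}-\theta_{j'}+1)=0$.

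The main obstacle is precisely this last case, where three ingredients must be lined up at once: (i) optimality and the strict positivity $c_{j'}>0$ are used to force $x^{*}_{j'}=\theta_{j'}-1$, handling $\theta_{j'}=1$ separately so the claim is not vacuous; (ii) the upper-bound values of the coordinates below $j'$ are read off from Observation~\ref{obs:leafval}; and (iii) the recursion of Observation~\ref{obs:phi} is recognized so that the coefficient $\phi_j(j')$ multiplies exactly $x^{*}_{j'}-\theta_{j'}+1$. Once these align, face membership is immediate, and $\Opt\subseteq\face$ follows by ranging over all leaves via \eqref{optsol}.
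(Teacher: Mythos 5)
Your argument is correct and follows essentially the same route as the paper: reduce to the leaves via \eqref{optsol}, dispose of $\theta$ and the leaves below $j$ trivially, use the hypothesis to exclude $\ell_j$ itself, and for leaves $j'\in\Ij$ combine optimality with $c_{j'}>0$ and Observation~\ref{obs:leafval} to pin the coordinates before applying the telescoping identity. The only cosmetic difference is that you inline the computation that the paper delegates to Proposition~\ref{prop:facetpt3}(1).
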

\begin{proof}
Recall that $\Opt \subseteq \{\theta\} \cup \bigcup_{i\in\I} \ell_{i}$. We must show $\xi_{j}(x)=0 \ \forall x\in\Opt$. $\xi_{j}(\theta)=0$ is trivial. Now consider $x\in\ell_{i}\cap\L$ for some $i\in\I$. Our assumption $\ell_{j}\cap\L=\emptyset$ (in case $j\in\I$) means that either $i\in\Ijm$ or $i\in\Ij$. By construction of $\ell_{i}$ in \eqref{leafnode}, we have $x_{t}=\theta_{t} \ \forall t\in\Ij[i]$. If $i\in\Ijm$, then $x_{j}=\theta_{j}$ irrespective of whether $j\in\I$ or not, and subsequently we have $\xi_{j}(x) = 0$. Now let $i\in\Ij$. Since we assumed $c_{j} > 0$, $c_{t} >0 \ \forall t\in\Ij$, \eqref{leafnode} gives us $x_{i}=\theta_{i}-1$ and Observation~\ref{obs:leafval} gives us $x_{j}=\ub_{j}$, $x_{t}=\ub_{t} \ \forall t\in\Ij\colon t < i$. This along with $x_{t}=\theta_{t}\ \forall t\in\Ij[i]$ and Proposition~\ref{prop:facetpt3} gives us $\xi_{j}(x) = 0$.
\end{proof}

We are now ready to prove our first main result.

\begin{thm}\label{thm:conv}
$\cokp\;=\; \left\{x\in[\zerovec,u]\colon\;\; x_{j} \:+\:\sum_{i\in\Ij}\phi_{j}(i)(x_{i}-\theta_{i}) \;\le\;\theta_{j},\;\; \forall j\in N \right\}$.
\end{thm}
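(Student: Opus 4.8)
Write $P$ for the polyhedron on the right-hand side. By Proposition~\ref{prop:valid} every packing inequality \eqref{packineq} is valid for $\cokp$, and since $\kp\subseteq\rect=[\zerovec,\ub]\cap\ints^{n}$ the box constraints are valid too, so $\cokp\subseteq P$ is immediate; the content of the theorem is the reverse inclusion $P\subseteq\cokp$. The polytope $P$ is bounded, and by Proposition~\ref{prop:reform} we have $P\cap\ints^{n}=\kp$. Hence it suffices to prove that $P$ is an integral polytope, or equivalently that $P$ and $\cokp$ have the same support function: since both sets are bounded and $\cokp\subseteq P$, it is enough to show that for every $c\in\Re^{n}$,
\[
\max\{c^{\top}x\colon x\in P\}\;=\;\max\{c^{\top}x\colon x\in\kp\}\;=\;\val[n],
\]
the last value being the one computed by the dynamic program of Proposition~\ref{prop:dp} (recall $\Ij[n]=\emptyset$, so $\val[n]$ is the global optimum over $\kp$).

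Because $\kp\subseteq P$, the bound $\max_{P}c^{\top}x\ge\val[n]$ is trivial, so the whole burden is $\max_{P}c^{\top}x\le\val[n]$. I would establish this by producing, for each $c$, a \emph{dual certificate}: a nonnegative aggregation of the packing inequalities \eqref{packineq} and the box inequalities $\zerovec\le x\le\ub$ that sums to $c^{\top}x\le\val[n]$. The dynamic program locates the optimizers: tracing the optimal path through the binary tree of Figure~\ref{fig:dp} identifies an optimal leaf and hence an optimal point of $\kp$, while Observations~\ref{obs:negcost} and \ref{obs:leafval} pin down exactly which coordinates of the optimizers are forced to $0$, to $\ub_{i}$, or to $\theta_{i}$ according to the sign of $c_{i}$. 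Lemma~\ref{lem:face} is the key selection tool: it identifies precisely when \emph{all} optimizers lie on a single facet $\face$, which is then the packing inequality I would give positive weight in the certificate.

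The construction proceeds by induction on $n$ (equivalently on $|\I|$), and the natural split is on the top coordinate, guided by \eqref{facetextpts}, which confines the integer points of any nontrivial facet to $x_{n}\in\{\theta_{n}-1,\theta_{n}\}$. In the first case the optimum is attained with $x_{n}=\theta_{n}$, i.e.\ the branch ${\val[n]}^{+}=c_{n}\theta_{n}+\val[\prev{n}]$ dominates; fixing $x_{n}=\theta_{n}$ restricts $P$ to a face that is the analogous polytope for the superincreasing knapsack on the remaining coordinates with reduced capacity $\rhs-\wt_{n}\theta_{n}$, so the inductive certificate for that smaller knapsack, augmented by the constant $c_{n}\theta_{n}$, suffices. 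In the second case the optimum is a leaf with $x_{n}\le\theta_{n}-1$, i.e.\ the branch ${\val[n]}^{-}$ dominates; here one selects, via Lemma~\ref{lem:face} and Observation~\ref{obs:negcost}, an index $j$ with $\theta_{j}<\ub_{j}$ whose facet $\face$ contains $\Opt$, uses its packing inequality as the leading term, and telescopes the remaining coefficients against the box bounds using the identity of Observation~\ref{obs:phi}. This case analysis is carried out in detail in \textsection\ref{sec:case1}.

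The main obstacle is the explicit construction of the dual multipliers together with the verification that they are nonnegative and aggregate \emph{exactly} to $c^{\top}x\le\val[n]$. The difficulty is bookkeeping: several packing inequalities overlap on the coordinates $i\in\I$, their coefficients $\phi_{j}(i)$ combine multiplicatively, and one must exploit the telescoping relation $\phi_{j}(\next{i})-\phi_{j}(i)=\phi_{j}(i)(\ub_{i}-\theta_{i})$ of Observation~\ref{obs:phi} to cancel cross terms correctly. Extra care is required for coordinates with $c_{i}=0$, where the optimizer is not unique (Observation~\ref{obs:leafval}(iii)), and for reconciling the two branches of the recursion at the splitting index; these are exactly the situations that the preparatory results Lemma~\ref{lem:face}, Observations~\ref{obs:negcost}--\ref{obs:leafval}, and Proposition~\ref{prop:facetpt3} were set up to control.
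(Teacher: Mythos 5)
Your reduction is set up correctly: validity (Proposition~\ref{prop:valid}) gives one inclusion, Proposition~\ref{prop:reform} gives $P\cap\ints^{n}=\kp$ for the right-hand-side polytope $P$, and establishing $\max\{c^{\top}x\colon x\in P\}=\max\{c^{\top}x\colon x\in\kp\}=f^{*}_{n}(c)$ for every $c$ would indeed finish the proof. The problem is that you never actually prove this equality. You announce that you would produce, for each $c$, nonnegative multipliers on the packing and bound inequalities aggregating exactly to $c^{\top}x\le f^{*}_{n}(c)$, and then state that constructing and verifying these multipliers is ``the main obstacle.'' That obstacle \emph{is} the theorem; naming the lemmas one would invoke while overcoming it is not a proof. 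The two steps you do sketch also have holes. In your first branch you fix $x_{n}=\theta_{n}$ and recurse, but the branch is selected according to which term of the dynamic program dominates, i.e.\ according to where the \emph{integer} optimum sits; that says nothing by itself about where the \emph{LP} optimum over $P$ is attained, which is the quantity you must bound, and lifting a certificate from the face $x_{n}=\theta_{n}$ back to $P$ is itself not automatic (the lifted packing inequalities contribute coefficients $\phi_{j}(n)\ge 0$ to $x_{n}$ that need not match $c_{n}$). Finally, your pointer to \textsection\ref{sec:case1} for the detailed case analysis is wrong: that subsection treats the intersection of a $\le$- and a $\ge$-knapsack, not this theorem.

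Separately, the dual-certificate route is strictly harder than what is needed, and the paper avoids it. The paper uses the primal technique of \citet[Approach 6]{wolsey1998ip}: since every proposed inequality is valid, it suffices to show that for each $c\neq\zerovec$ \emph{some single} proposed inequality --- a packing inequality \eqref{packineq}, a bound $x_{j}\le\ub_{j}$, or $x_{j}\ge 0$ --- is tight at \emph{every} point of $\Opt$. This is a purely combinatorial statement about the integer optimizers; it requires no multipliers and no reasoning about the relaxation $P$ at all, and it is exactly what the dynamic-programming tree delivers: $c_{j}<0$ forces $x_{j}=0$ on $\Opt$; if $\theta\notin\Opt$ one finds $i$ with $c_{i}>0$ below the smallest optimal leaf so that $x_{i}=\ub_{i}$ on all of $\Opt$ (Observation~\ref{obs:leafval}); and otherwise Lemma~\ref{lem:face}, Claim~\ref{claim:sumtheta}-type identities and a careful treatment of the zero-cost coordinates place $\Opt$ inside a single face $F_{j}$. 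You quote Lemma~\ref{lem:face}, but only as a device for deciding which inequality ``gets positive weight''; recognizing that its conclusion $\Opt\subseteq F_{j}$ already suffices would have let you discard the multiplier construction entirely and reduced the remaining work to the case analysis that your proposal only gestures at.
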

\begin{proof}
\newcommand{\U}[1][j]{\mathbb{U}_{#1}}
\newcommand{\T}[1][j]{\mathbb{0}_{#1}}
Let $\face = \{x\in\cokp\colon \xi_{j}(x) = 0\}$, $\U = \{x\in\cokp\colon x_{j}=\ub_{j}\}$, and $\T = \{x\in\cokp\colon x_{j}=0\}$ denote the faces of $\cokp$ defined by the proposed inequalities. Note that $\face[n] = \U[n]$ because $\theta_{n} = \ub_{n}$ and $\Ij[n]=\emptyset$. Based on \citet[Approach 6]{wolsey1998ip}, we must show that for any $c\neq\zerovec$, there exists $j\in N$ such that either $\Opt\subseteq\face$ or $\Opt\subseteq\U$ or $\Opt\subseteq\T$. If there exists $j\in N$ with $c_{j}<0$, then clearly $\Opt\subseteq\T$. Assume $c \ge \zerovec$.

First suppose that $\ell_{n}\cap\L=\emptyset$. Let $\i := \next{\max\{j\in\I\colon \ell_{j}\cap\L\neq\emptyset \}}$ be the smallest non-leaf node that is larger than the parent of every optimal leaf node. Then for any $x\in\Opt$, we have $x_{j}=\theta_{j} \ \forall j\in\i\cup\Ij[\i]$ and it follows that $\xi_{j}(x) = 0$ and hence $\Opt\subseteq\face$ for all $j\in\i\cup\Ij[\i]$. Henceforth assume $\ell_{n}\cap\L\neq\emptyset$. Recall that $\ell_{0}=\{\theta\}$.

\begin{description}
\item[Case i.] $\theta\notin\L$. Let $\i\in\I$ be the parent node of the smallest optimal leaf. We first argue that there exists $i < \i$ such that $c_{i} > 0$. Suppose $c_{i}=0 \ \forall i < \i$. Then ${\val[\i]}^{-} = c_{\i}(\theta_{\i}-1) \le c_{\i}\theta_{\i} = {\val[\i]}^{+}$ and hence $\val[\i] = {\val[\i]}^{+} = c_{\i}\theta_{\i}$. Then every leaf $\ell_{i}$, for all $i < \i$ (including $\ell_{0}$), is optimal, a contradiction to the optimality of $\ell_{\i}$. Hence there exists some $i < \i$ such that $c_{i} > 0$. Since $\ell_{\i}$ is the smallest optimal leaf and $i < \i$, Observation~\ref{obs:leafval} implies that $x_{i}=\ub_{i} \ \forall x\in\Opt$ and thus $\Opt\subseteq\U[i]$. 


\item[Case ii.] $\theta\in\L$. 
\begin{claim}\label{claim:sumtheta}
$\val[\ite] = {\val[\ite]}^{+} = c_{\ite}\theta_{\ite} +\sum_{i\in\Ijm[\ite]}c_{i}\theta_{i}$ for all $\ite\in\I$. Since $\theta\in\L$, we have $\val[i_{1}] = {\val[i_{1}]}^{+} = c_{i_{1}}\theta_{i_{1}}$. Let $j\in\I\setminus\{i_{1}\}$. For any $i\in\Ijm$, Observation~\ref{obs:negcost} and $\theta\in\L$ give us ${\val[i]}^{-} \le {\val[i]}^{+}$. This implies $\val[i] = {\val[i]}^{+} = c_{i}\theta_{i} + \val[\prev{i}]$. Since we already argued $\val[i_{1}] = c_{i_{1}}\theta_{i_{1}}$, a straightforward induction argument gives us the desired claim.\qquad\myqed
\end{claim}
It follows that ${\val[\ite]}^{+} \ge {\val[\ite]}^{-}$ for all $j\in\I$.

\begin{description}
\item[Case ii-a.] $\exists j\notin\I$ such that $c_{j} > 0$. We first argue that this case leads to $c_{i} > 0 \ \forall i\in\Ij$. Suppose that $c_{i}=0$ for some $i\in\Ij$. Since $i > j$ with $j\notin\I$, we have $j\notin\Ijm[i]$. Consider the following:
\[
\blue{{\val[i]}^{-} - {\val[i]}^{+} \:=\: \sum_{t < i}c_{t}\ub_{t} - \sum_{t\in\Ijm[i]}c_{t}\theta_{t} \:=\: \sum_{t\in\Ijm[i]}c_{t}(\ub_{t}-\theta_{t}) \:+\: \sum_{t< i\colon t \notin\,\Ijm[i]\cup j}c_{t}\ub_{t} \:+\: c_{j}\ub_{j} \:>\: 0, }
\]
\blue{where the strict inequality is due to $\theta\le\ub,\ub>\zerovec,c\ge\zerovec$ and $c_{j}>0$. Thus we have arrived at a contradiction to ${\val[i]}^{+} \ge {\val[i]}^{-}$.} Hence $c_{i} > 0 \ \forall i\in\Ij$. Applying Lemma~\ref{lem:face} gives us $\Opt\subseteq\face$.

\item[Case ii-b.] $c_{i}=0 \ \forall i\notin\I$ or $\I = \{1,\dots,n\}$.  Since $c \neq 0$ and $c \ge \zerovec$, there exists some $i\in\I$ with $c_{i} > 0$. First suppose $c_{i} > 0$ for all $i\in\I$. Since $\val[i_{1}] = c_{i_{1}}\theta_{i_{1}}$ by Claim~\ref{claim:sumtheta}, we have $\ell_{i_{1}}\cap\L=\emptyset$ and Lemma~\ref{lem:face} gives us $\Opt\subseteq\face[i_{1}]$. Now let $\i := \max\{i\in\I \colon c_{i} = 0\}$ be the largest non-leaf node with cost coefficient equal to zero. By construction, $c_{i} > 0$ for all $i\in\Ij[\i]$. The identity $\val[\i] = \max\{\sum_{i\in\Ijm[\i]}c_{i}\ub_{i}, \sum_{i\in\Ijm[\i]}c_{i}\theta_{i} \}$ along with $\val[\i] = \sum_{i\in\Ijm[\i]}c_{i}\theta_{i}$ from Claim~\ref{claim:sumtheta} and $c_{\i}=0$ implies that
\begin{equation}\label{casecondition}
c_{i} = 0 \quad \text{OR} \quad \left(c_{i} > 0 \;\text{  and  }\; \theta_{i}=\ub_{i}\right) \qquad \forall i \in\Ijm[\i].
\end{equation}
\qquad First suppose there exists a $j\in\Ijm[\i]$ with $c_{j} > 0$. Equation~\eqref{casecondition} gives us $\theta_{j}=\ub_{j}$ and along with $\Ijm\subseteq\Ijm[\i]$, also implies $\sum_{t\in\Ijm}c_{t}(\ub_{t}-\theta_{t})=0$. \blue{Now ${\val[j]}^{+} = c_{j} + \sum_{t\in\Ijm}c_{t}\theta_{t} > \sum_{t\in\Ijm}c_{t}\ub_{t} = {\val[j]}^{-}$ and as a result, Observation~\ref{obs:negcost} implies} $\ell_{j}\cap\L=\emptyset$. Consider an optimal solution $x\in\ell_{t}\cap\Opt$ for some $t\in\{0\}\cup(\I\setminus\{j\})$. If $t > j$, then $c_{j} > 0$ and Observation~\ref{obs:leafval} implies $x_{j}=\ub_{j}$. Otherwise $t < j$ and $x_{j}$ is fixed to $\theta_{j} = \ub_{j}$. Hence $\Opt\subseteq\U$ if there exists a $j\in\Ijm[\i]$ with $c_{j} > 0$.  

\qquad Finally, suppose that $c_{i} = 0 \ \forall i\in\Ijm[\i]$, or $\i = i_{1}$ and $\Ijm[\i]=\emptyset$. Since we have already assumed in this case that $c_{i}=0 \ \forall i\notin\I$, it follows that $c_{i}=0 \ \forall i < \i$. Hence $\i < n$, because otherwise $c=\zerovec$. Consider $\next{\i}$, the first non-leaf node above $\i$. The definition of $\i$ gives us $c_{\next{\i}} > 0$ and $c_{i} > 0 \ \forall i\in\Ij[\next{\i}]$. \green{We argue that $\ell_{\next{\i}}\cap\L=\emptyset$; doing so and invoking Lemma~\ref{lem:face} would lead to $\Opt\subseteq\face[\next{\i}]$.} Claim~\ref{claim:sumtheta} gives us $\val[\next{\i}] = {\val[\next{\i}]}^{+} = c_{\next{\i}}\theta_{\next{\i}}$ and since $c_{\next{\i}} > 0$, we have ${\val[\next{\i}]}^{+} > c_{\next{\i}}(\theta_{\next{\i}}-1)$. \green{Now $c_{i}=0 \ \forall i < \i$ implies that ${\val[\next{\i}]}^{-} = c_{\next{\i}}(\theta_{\next{\i}}-1)$. Hence ${\val[\next{\i}]}^{+} > {\val[\next{\i}]}^{-}$ and Observation~\ref{obs:negcost} implies that $\ell_{\next{\i}}\cap\L=\emptyset$}. 
\end{description}
\end{description}
All the above cases are mutually exclusive and exhaustive. Hence our proof is complete.
\end{proof}

\subsection{Applications of Theorem~\ref{thm:conv}}\label{sec:appln1}
\paragraph{Lower bounded knapsack}
\newcommand{\lb}{l}
\newcommand{\lokp}{\ensuremath{\kpset^{\lb}}}
For a lower bounded superincreasing knapsack $\lokp:= \{x\in[\lb,\ub]\cap\ints^{n}_{+}\colon \wt^{\top}x \le \rhs\}$, we can (i) perform a variable change $y= x - \lb$ to obtain $\kp = \{y\in\rect[][\ub-\lb]\colon \wt^{\top}y\le\rhs-\wt^{\top}\lb\}$, (ii) apply Theorem~\ref{thm:conv} to get $\cokp$, and (iii) substitute back $x=y + \lb$ to obtain $\conv{\lokp}$. In particular, it is straightforward to verify that if $\lb\le\theta$, then $\conv{\lokp} = (\cokp)\cap[\lb,\ub]$.

\paragraph{Divisible knapsack}
An integer basis is a strictly increasing sequence $\{\wt_{i}\}_{i\ge 1} \subset \ints_{++}$  with the property that there exists a sequence $\{\ub_{i}\}_{i\ge 1} \subset \ints_{++}$ such that every $\rhs\in\ints_{++}$ can be expressed as $\rhs = \sum_{i=1}^{n}\wt_{i}x_{i}$ for some $n$ and $x\in\rect$. An equivalent characterization due to Cantor \citep[cf.][Theorem 2.1]{pitteloud2004log} is the following: \emph{$\{\wt_{i}\}$ is an integer basis if and only if $\wt_{1}=1$ and $\wt_{i}\,|\,\wt_{i+1} \ \forall i$. Moreover, the sequence $\{\ub_{i}\}$ is uniquely determined as $\ub_{i}=\frac{\wt_{i+1}}{\wt_{i}} - 1$}. Then, any finite subsequence of an integer basis $\{\wt_{i}\}$ and its corresponding $\{\ub_{i}\}$ define a divisible superincreasing knapsack, whose convex hull is given by Theorem~\ref{thm:conv}. The set $\expkp$ introduced in \eqref{expkp} is a particular case that uses powers of $\base$ as its integer basis. Another class of divisible superincreasing knapsacks is obtained when $\ub_{i}=\ub_{0} \ \forall i$ and some $0 < \ub_{0} \le \frac{1}{2}\min_{i}\frac{\wt_{i+1}}{\wt_{i}}$.

\newcommand{\oneub}{\ub^{1}}
\newcommand{\twoub}{\ub^{2}}
\newcommand{\rrhs}{d}
\newcommand{\wwt}{w}
\newcommand{\onekp}{\kp^{\le}} 
\newcommand{\twokp}{\kp^{\ge}} 
\newcommand{\T}{T}
\newcommand{\Tj}[1][j]{\T_{#1}}
\newcommand{\g}{g}
\newcommand{\h}{h}
\renewcommand{\cokp}{\conv{\onekp}}
\newcommand{\cotwokp}{\conv{\twokp}}
\renewcommand{\i}{s}
\newcommand{\M}{N}

\section{Intersection of knapsacks}\label{sec:twoside}
In this section, we consider the problem of convexifying the intersection of $m\ge 2$ superincreasing knapsacks of $\le$- or $\ge$-types. We prove that $\bigO(n)$ number of linear inequalities describe the convex hull of this intersection. The number of inequalities is independent of the number of intersecting knapsack sets. It suffices to  address the case of \emph{two} intersecting  knapsacks; the general case follows immediately after noting that every superincreasing knapsack corresponds to a lexicographically ordered set of integer vectors and the lexicographic order is a total order. Our proof generalizes a recent result for $\bin$ superincreasing knapsacks by \citet{muldoon2012}.

Note that if we are given two $\le$-type superincreasing knapsacks -- $\{x\in\rect\colon \wt^{\top}x\le\rhs\}$ with maximal packing $\gamma$ and $\{x\in\rect\colon\wwt^{\top}x\le\rrhs \}$ with maximal packing $\theta$, and w.o.l.o.g. we assume that $\gamma\preceq\theta$, then Proposition~\ref{prop:lexord2} tells us that their intersection is equal to $\{x\in\rect\colon x \preceq\gamma\}$. Hence the convex hull of the intersection of two $\le$-type knapsacks is given by $\bigO(n)$ packing inequalities corresponding to one of the sets. The nontrivial case to prove is when we are intersecting a $\le$-type and a $\ge$-type knapsack.

Henceforth, let $\onekp :=\{x\in\rect\colon \wt^{\top}x\le\rhs\}$ and $\twokp := \{x\in\rect\colon\wwt^{\top}x\ge\rrhs \}$ be two superincreasing knapsacks with $\wt,\wwt >\zerovec$ and $\rhs,\rrhs > 0$. Proposition~\ref{prop:lexord2} implies that 
\begin{equation}\label{interlex}
\onekp\cap\twokp = \{x\in\rect\colon \gamma\preceq x \preceq \theta\},
\end{equation}
where $\theta$ is the maximal packing of $\kp$ given by \eqref{greedysol} and $\gamma$ is the minimal packing of $\twokp$ obtained from \eqref{greedysol} by complementing variables:
\[
\gamma_{i} := \ub_{i}\: - \: \min\left\{\ub_{i}, \: \left\lfloor \frac{\wwt^{\top}\ub - \rrhs - \sum_{k=i+1}^{n}\wwt_{k}(\ub_{k}-\gamma_{k})}{\wwt_{i}}\right\rfloor\right\} \quad \forall i=n,\ldots,1.
\]
It follows from Theorem~\ref{thm:conv} that
\begin{equation}\label{convtwokp}
\cotwokp \:=\: \left\{x\in[\zerovec,\ub]\colon x_{j} + \sum_{i\in\Tj}\Phi_{j}(i)(x_{i}-\gamma_{i}) \,\ge\, \gamma_{j} \;\; \forall j\in N \right\},
\end{equation}
where $\T = \{i\in N\colon \gamma_{i}\le\ub_{i}-1\},\Tj = \{i\in\T\colon i > j\}$, and $\Phi_{j}(i) = \gamma_{j}\prod_{k\in\Tj\colon k < i}(\gamma_{k}+1)$ for all $i\in\Tj$. The main result of this section proves that the convex hull operator distributes over $\onekp\cap\twokp$.
\green{
\begin{thm}\label{thm:twokp2}
$\conv{\{x\in\rect\colon \gamma \preceq x \preceq \theta \}} = \conv{\{x\in\rect\colon x \succeq \gamma \}}\, \cap\, \conv{\{x\in\rect\colon x \preceq \theta \}}$. In particular, if $\wt,\wwt > \zerovec$, then $\conv{(\onekp\cap\twokp)} = \cokp\,\cap\,\cotwokp$. 
\end{thm}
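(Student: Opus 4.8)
The plan is to first peel off the ``in particular'' statement and reduce everything to the purely order-theoretic identity. Writing $U := \{x\in\rect\colon x\preceq\theta\}$, $L := \{x\in\rect\colon x\succeq\gamma\}$, and $M := U\cap L$, Proposition~\ref{prop:lexord2} together with \eqref{interlex} gives $\onekp\cap\twokp = M$, while Theorem~\ref{thm:conv} and \eqref{convtwokp} identify $\conv U = \cokp$ and $\conv L = \cotwokp$; hence the second assertion is exactly $\conv M = \conv L \cap \conv U$ under the standing assumption $\gamma\preceq\theta$ (if $\gamma\succ\theta$ both sides are empty). The inclusion $\conv M \subseteq \conv L \cap \conv U$ is immediate from $M\subseteq L$, $M\subseteq U$ and monotonicity of the hull, so the whole content of the theorem is the reverse inclusion $\conv L \cap \conv U \subseteq \conv M$.

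Since $\conv M$ and $Q := \conv L \cap \conv U$ are polytopes with $\conv M \subseteq Q$, it suffices to prove that every vertex of $Q$ is an integer point of the lexicographic interval $M$: the vertices would then all lie in $M$, and as $Q$ is the convex hull of its vertices we would get $Q\subseteq\conv M$. By Theorem~\ref{thm:conv} and \eqref{convtwokp}, $Q$ is cut out by the two families of packing inequalities (those generated by $\theta$, with coefficients $\phi_j$, and those generated by $\gamma$, with coefficients $\Phi_j$) together with the box $[\zerovec,\ub]$. The structural feature I would exploit is that both families are \emph{triangular}: the $j$-th inequality of either family has $x_j$ as its leading term and otherwise involves only coordinates in $\Ij$ or $\Tj$, all of which exceed $j$. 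This lets one analyze a vertex of $Q$ from the most significant coordinate downward.

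Concretely, for $j=n$ the two inequalities collapse (since $\Ij[n]=\emptyset=\Tj[n]$ and $\theta_n=\ub_n$) to $\gamma_n \le x_n \le \theta_n$. I would then induct on the coordinate index: triangularity isolates $x_j$ between a $\theta$-packing upper bound and a $\gamma$-packing lower bound whose remaining terms involve only already-resolved coordinates $i>j$, and the integer extreme-point descriptions of Proposition~\ref{prop:facetpt3} and \eqref{facetextpts}, together with their $\gamma$-analogues obtained by complementation, pin each coordinate to an integer value consistent with $\gamma_j\le x_j\le\theta_j$. An equivalent route is through support functions: for every $c$ one shows $\max\{c^\top x\colon x\in Q\} = \max\{c^\top x\colon x\in M\}$ by running the linear-time dynamic program of Proposition~\ref{prop:dp}, adapted to the doubly bounded interval $M$, to produce an integer optimizer in $M$, and then certifying its optimality over $Q$ with nonnegative multipliers drawn from the active packing facets, in the spirit of Lemma~\ref{lem:face}.

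The main obstacle is the \emph{interaction} of the two packing systems at a vertex of $Q$. In the single-knapsack analysis of Theorem~\ref{thm:conv} each $c$-optimal face was controlled by one family alone, whereas here an active $\theta$-facet and an active $\gamma$-facet can share coordinates, and the real work is to rule out fractional vertices manufactured by combining an upper packing inequality for $\theta$ with a lower one for $\gamma$. Organizing this case analysis is where the bulk of the argument lies and where the generalization of the $\bin$ result of \citet{muldoon2012} is needed: one must separate the ``middle'' situation, where some leading coordinate $k$ satisfies $\gamma_k < x_k < \theta_k$ and the two bounds decouple so that the lower coordinates are governed by a single family, from the boundary situation $x_k\in\{\gamma_k,\theta_k\}$, which recurses to a lower-dimensional interval of the same type and drives the induction.
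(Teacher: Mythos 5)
Your reduction is sound as far as it goes: identifying $\onekp\cap\twokp$ with the lexicographic interval $M=\{x\in\rect\colon\gamma\preceq x\preceq\theta\}$ via Proposition~\ref{prop:lexord2} and \eqref{interlex}, noting that $\subseteq$ is trivial, and observing that it suffices to show every vertex of $Q:=\cokp\cap\cotwokp$ lies in $M$ --- all of this matches the paper's setup. But the proposal stops exactly where the proof has to start. The coordinate-by-coordinate induction you describe (``triangularity isolates $x_j$\ldots\ pin each coordinate to an integer value'') is not carried out, and it does not follow from triangularity alone: a vertex of $Q$ is determined by $n$ tight constraints drawn from \emph{both} packing families plus box constraints, and tightness of the $j$-th inequality of one family only expresses $x_j$ as an affine --- generally non-integral --- function of the higher coordinates. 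In the critical case $\gamma_n=\theta_n-1$, your base step gives only $\theta_n-1\le x_n\le\theta_n$, a unit interval in which $x_n$ can sit strictly fractionally while other constraints are tight; nothing in your sketch excludes such a vertex, and the ``boundary situation recurses to a lower-dimensional interval'' claim is unsubstantiated because after a fractional $x_n$ one is no longer looking at a lexicographic interval at all. That this is genuinely delicate is underscored by the paper's own counterexample (the Remark following the theorem): when some weights vanish, $\cokp\cap\cotwokp$ \emph{does} acquire a fractional vertex, so any correct argument must use the full positivity/lexicographic structure in a way your outline never pins down.

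The paper's route is different in precisely the place you defer. It splits on $\gamma_n\le\theta_n-2$ versus $\gamma_n=\theta_n-1$. The easy case is handled geometrically (Proposition~\ref{prop:case1}), by covering $Q$ with $\{x\in\cokp\colon x_n\ge\gamma_n+1\}\cup\{x\in\cotwokp\colon x_n\le\theta_n-1\}$ and using \eqref{facetextpts} --- close in spirit to your ``decoupled middle'' observation. The hard case is \emph{not} done by a vertex induction: it writes $\onekp\cap\twokp$ as the disjunction of Lemma~\ref{lem:twokpunion}, forms the Balas extended formulation (Claim~\ref{claim:twokpext}), and then, for an arbitrary $x\in Q$ with $x_n=\theta_n-\epsilon$, explicitly constructs the lifting $y_i=\min\{\epsilon\ub_i,x_i\}$ and verifies feasibility through the technical Lemmas~\ref{lem:slice1} and \ref{lem:facetcoeff1}. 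That verification --- the recursive identities for $\phi_j(\cdot)$ and $\Phi_j(\cdot)$ and the resulting inequalities --- is the actual content of the theorem, and your proposal contains no substitute for it.
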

}
This is an interesting result because in general for any two arbitrary sets $\mathcal{X}_{1}$ and $\mathcal{X}_{2}$, we have $\conv{(\mathcal{X}_{1}\cap\mathcal{X}_{2})} \subseteq \conv{\mathcal{X}_{1}} \cap \conv{\mathcal{X}_{2}}$. For the intersection $\mathcal{X} = \cap_{i=1}^{m_{1}}\onekp_{i}\, \bigcap\, \cap_{i=1}^{m-m_{1}}\twokp_{i}$ of $m\ge 2$ superincreasing knapsacks (each having a coefficient vector of strictly positive integers), the equivalence to lexicographic ordering implies that $\mathcal{X} = \onekp_{\i}\,\cap\,\twokp_{\i^{\prime}}$ for some indices $\i,\i^{\prime}$. Then Theorem~\ref{thm:twokp2} gives us $\bigO(n)$ inequalities to describe the convex hull of this intersection.
\blue{
\begin{rem}
For $\onekp\cap\twokp$, the assumption $\wt,\wwt > \zerovec$ is not w.o.l.o.g since we are considering two knapsacks simultaneously. Suppose that $\wt_{i} > 0$ for all $i\in\M_{1}:=\{1,\ldots,n_{1}\}$ and some $1\le n_{1}\le n-1$, and $\wwt_{i}>0 $ for all $i\in\M_{2}\subseteq N$ (assume w.o.l.o.g. that $\{n_{1}+1,\ldots,n\}\subseteq\M_{2}$). In this case, we don't have the distributive property and in general, $\conv{(\onekp\cap\twokp)} \subsetneq \cokp\,\cap\,\cotwokp$, as shown by the following example. 
\begin{examp}
Let $\onekp = \{x\in\ints^{7}_{+}\colon 2x_{1} + 8x_{2} + 46x_{3} + 150x_{4} + 310x_{4} \le 841, x \le (3,5,2,1,2,4,2)\}$ and $\twokp = \{x\in\ints^{7}_{+}\colon 2x_{4} + 7x_{5} + 30x_{6}+50x_{7} \ge 150, x \le (3,5,2,1,2,4,2)\}$. The inequalities describing $\cokp$ are described in Example~\ref{examp1:cont} in \textsection\ref{sec:conv} whereas the nontrivial facets of $\cotwokp$ are $x_{5} + 2x_{6} + 4x_{7}\ge 12$, $ x_{6}+x_{7} \ge 4$ and $x_{7}\ge 1$. The PORTA software \citep{christofporta} tells us that the intersection of $\cokp$ and $\cotwokp$ has a fractional extreme point $(0, 0, 2, 1, 1, 7/2, 1)$.\hfill\myqed
\end{examp}
\noindent In fact, we claim that $\onekp\cap\twokp$ may not be equal to the set of integer points that are lexicographically ordered between two given integer vectors. Suppose it were true: $\onekp\cap\twokp = \{x\in\rect\colon \gamma^{\prime}\preceq x \preceq \theta^{\prime}\}$ for some $\gamma^{\prime},\theta^{\prime}\in\ints^{n}_{+}$. Then, because $\onekp = \{x\in\rect\colon (x_{i})_{i\in\M_{1}} \preceq (\theta_{i})_{i\in\M_{1}} \}$ and $\twokp = \{x\in\rect\colon (x_{i})_{i\in\M_{2}} \succeq (\gamma_{i})_{i\in\M_{2}} \}$, we must have $\theta^{\prime}_{i}=\theta_{i}\ \forall i\in\M_{1}, \theta^{\prime}_{i}= \ub_{i} \ \forall i\in N\setminus\M_{1}, \gamma^{\prime}_{i} = \gamma_{i} \ \forall i\in\M_{2},\gamma^{\prime}_{i}= 0 \ \forall i\in N\setminus\M_{2}$. It is obvious that $\onekp\cap\twokp \subseteq \{x\in\rect\colon \gamma^{\prime}\preceq x \preceq \theta^{\prime}\}$. Since the knapsack $\sum_{i\in\M_{1}}\wt_{i}x_{i}\le\rhs$ is nontrivial (i.e. $\sum_{i\in\M_{1}}\wt_{i}\ub_{i}>\rhs$), there must exist some $k\in\{1,\ldots,n_{1}-1\}$ such that $\theta_{k} < \ub_{k}$. Now, $x^{\prime} := (\gamma^{\prime}_{1},\ldots,\gamma^{\prime}_{k-1},\max\{\theta_{k}+1,\gamma^{\prime}_{k}\},\gamma^{\prime}_{k+1},\ldots,\gamma^{\prime}_{n-1},\theta_{n}-1)$ satisfies $\gamma^{\prime}\preceq x^{\prime}\preceq\theta^{\prime}$ but $(x^{\prime}_{i})_{i\in\M_{1}} \npreceq (\theta_{i})_{i\in\M_{1}}$. In fact, $x^{\prime} \notin \cokp$. This gives us a contradiction. Thus, in the presence of zeros in the coefficients of at least one of the two knapsacks, we cannot use the nice structural properties of lexicographic orderings to convexify the intersection of $\onekp$ and $\twokp$.
\end{rem}}

The rest of this section is devoted to proving Theorem~\ref{thm:twokp2}. We assume throughout that $\wt,\wwt > \zerovec$ and hence identity~\eqref{interlex} holds true. In order to ensure that $\onekp\cap\twokp$ is a full-dimensional set, we assume w.o.l.o.g. that $\sum_{i=1}^{n-1}\wwt_{i}\ub_{i} \:+\: (\theta_{n}-1)\wwt_{n} \ge \rrhs$ and $\gamma_{n}\le\theta_{n}-1$; otherwise we can fix $x_{n}=\theta_{n}$ and address the lower dimensional case with modified right hand sides. \green{Our arguments are divided into two cases: $\gamma_{n} \le \theta_{n}-2$ and $\gamma_{n}=\theta_{n}-1$. The proof of the first case depends on a geometric intuition, as explained in \textsection\ref{sec:case1}. This geometric insight breaks down when $\gamma_{n}=\theta_{n}-1$ and hence we resort to some technical lemmas in \textsection\ref{sec:case2}. Based on these building blocks, the proof of Theorem~\ref{thm:twokp2} is presented in \textsection\ref{sec:proof}.
}
%

\subsection{$\gamma_{n}\le\theta_{n}-2$}\label{sec:case1}
\blue{
Consider Figure~\ref{fig:cases}. It is apparent that in the two-dimensional case, we always have $\conv\{x\in\rect[2]\colon \gamma\preceq x \preceq \theta\} = \conv\{x\in\rect[2]\colon x \succeq\gamma\}\cap\{x\in\rect[2]\colon x \preceq \theta \}$. In Figure~\ref{fig:case1}, where $\gamma_{2} \le \theta_{2}-2$, we see that $\conv\{x\in\rect[2]\colon \gamma\preceq x \preceq \theta\}$ is equal to $(A \cup B)\bigcup (B\cup C)$, where the three sets are defined as follows: $A=\{x\in\conv{\{y\in\rect[2]\colon y\preceq\theta\}}, x_{2}\ge\theta_{2}-1\}$, $B = [0,\ub_{1}]\times [\gamma_{2}+1 ,\theta_{2}-1]$ and $C=\{x\in\conv{\{y\in\rect[2]\colon y\succeq\gamma\}}, x_{2}\le \gamma_{2}+1\}$. This geometric intuition of expressing the convex hull as a union of two sets enables us to prove that the convex hull operator distributes for arbitrary $n$ when $\gamma_{n}\le\theta_{n}-2$.
}
\begin{figure}[htbp]
\begin{center}
\subfigure[$\gamma_{n} \le \theta_{n}-2$. Convex hull is the union of $A,B,C$.]{
\label{fig:case1}
\includegraphics[scale=0.4]{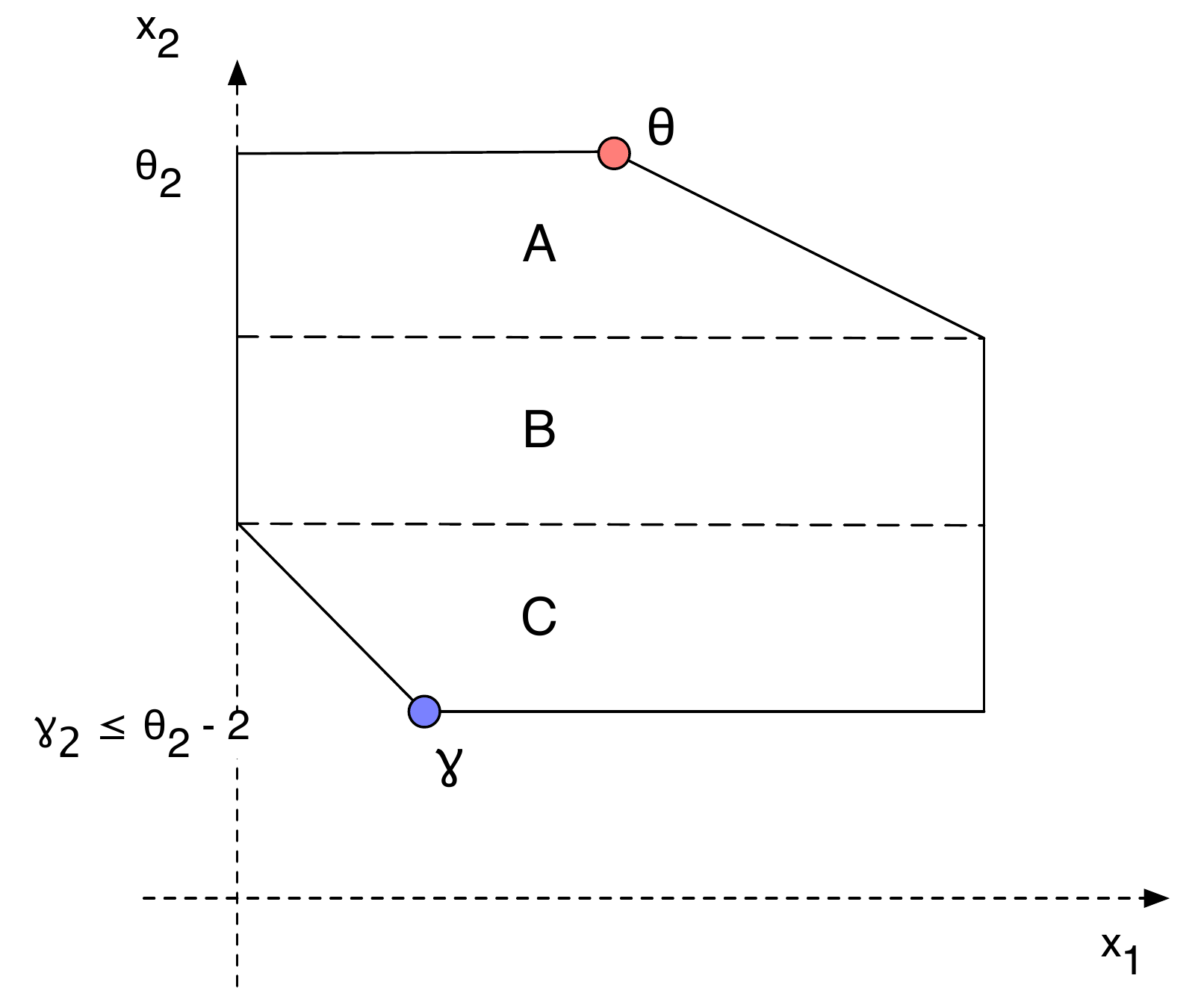}
}\hspace{0.5cm}
\subfigure[$\gamma_{n}=\theta_{n}-1$. Convex hull is the union of $\{x\in\onekp\colon x_{2}=\theta_{2}\}$ and $\{x\in\twokp\colon x_{2}=\theta_{2}-1\}$.]{
\label{fig:case2}
\includegraphics[scale=0.4]{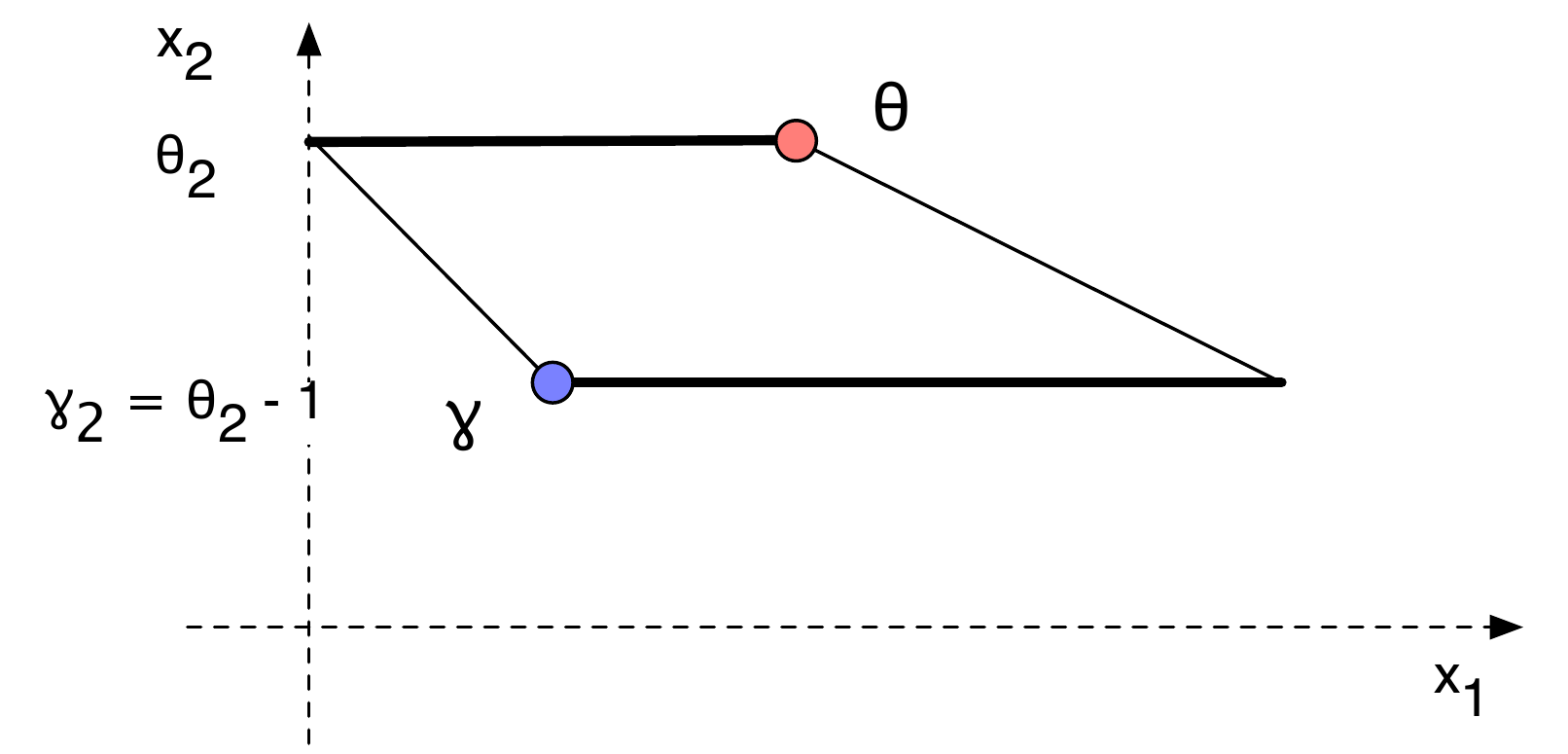}
}
\caption{Two cases for convexifying $\{x\in\rect\colon\gamma\preceq x \preceq \theta\}$.}
\label{fig:cases}
\end{center}
\end{figure}

\blue{
\begin{prop}\label{prop:case1}
Assume that $\gamma_{n}\le\theta_{n}-2$. Then 
\[
\conv{(\onekp\cap\twokp)} = \{x\in\cokp\colon x_{n}\ge \gamma_{n}+1 \} \cup \{x\in\cotwokp\colon x_{n}\le\theta_{n}-1 \} = \cokp\cap\cotwokp.
\]
\end{prop}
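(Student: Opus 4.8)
The plan is to establish the chain $U_{1}\cup U_{2} \subseteq \conv(\onekp\cap\twokp) \subseteq \cokp\cap\cotwokp = U_{1}\cup U_{2}$, where I abbreviate $U_{1}:=\{x\in\cokp\colon x_{n}\ge\gamma_{n}+1\}$ and $U_{2}:=\{x\in\cotwokp\colon x_{n}\le\theta_{n}-1\}$. The middle inclusion is the generic one, since $\onekp\cap\twokp$ is a subset of both $\onekp$ and $\twokp$. Thus the two substantive tasks are the outer equality $\cokp\cap\cotwokp=U_{1}\cup U_{2}$ and the leftmost inclusion $U_{1}\cup U_{2}\subseteq\conv(\onekp\cap\twokp)$; combining them collapses the chain to the two asserted identities.

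The engine for both tasks is a \emph{box lemma}: the slab $\{x\in\cokp\colon x_{n}\le\theta_{n}-1\}$ equals the full box $\{x\colon 0\le x_{i}\le\ub_{i}\ \forall i<n,\ 0\le x_{n}\le\theta_{n}-1\}$, and symmetrically $\{x\in\cotwokp\colon x_{n}\ge\gamma_{n}+1\}=\{x\colon 0\le x_{i}\le\ub_{i}\ \forall i<n,\ \gamma_{n}+1\le x_{n}\le\ub_{n}\}$. To prove the first, recall from Theorem~\ref{thm:conv} that $\cokp\subseteq[\zerovec,\ub]$ and that the packing inequality for $j=n$ is $x_{n}\le\theta_{n}$; it then remains to check that every point of the box satisfies the packing inequality for each $j<n$. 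When $\theta_{j}=\ub_{j}$ this is the trivial bound $x_{j}\le\ub_{j}$, and when $\theta_{j}<\ub_{j}$ the coefficients $\phi_{j}(i)$ are nonnegative, so the left-hand side is maximized over the box at $x_{i}=\ub_{i}$ (for $i\in\Ij\cup\{j\}$, $i<n$) together with $x_{n}=\theta_{n}-1$; Observation~\ref{obs:phi} then telescopes this maximum to exactly $\theta_{j}$. Hence the box is contained in $\cokp$, the reverse containment is immediate, and the statement for $\cotwokp$ follows from the first by the change of variables $y=\ub-x$, which turns $\twokp$ into a $\le$-type superincreasing knapsack with greedy solution $\ub-\gamma$.

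Granting the box lemma, the outer equality is a slab argument in the coordinate $x_{n}$ that uses the hypothesis $\gamma_{n}\le\theta_{n}-2$ decisively. For $\supseteq$, note $U_{1}\subseteq\cokp$ and $U_{1}\subseteq\{x\colon 0\le x_{i}\le\ub_{i}\ \forall i<n,\ \gamma_{n}+1\le x_{n}\le\theta_{n}\}$, so the box lemma for $\cotwokp$ (with $\theta_{n}\le\ub_{n}$) gives $U_{1}\subseteq\cotwokp$; thus $U_{1}\subseteq\cokp\cap\cotwokp$, and symmetrically $U_{2}\subseteq\cokp\cap\cotwokp$. For $\subseteq$, any $x\in\cokp\cap\cotwokp$ satisfies $\gamma_{n}\le x_{n}\le\theta_{n}$; if $x_{n}\ge\gamma_{n}+1$ then $x\in U_{1}$, and otherwise $x_{n}<\gamma_{n}+1\le\theta_{n}-1$ forces $x\in U_{2}$. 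This last step is exactly where $\gamma_{n}+1\le\theta_{n}-1$ is required, so that the two caps $U_{1}$ and $U_{2}$ overlap and jointly cover the whole range of $x_{n}$.

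For the leftmost inclusion it suffices, by symmetry, to show $U_{1}\subseteq\conv(\onekp\cap\twokp)$, and I will do so by proving that $U_{1}$ is an \emph{integral} polytope whose vertices lie in $\onekp\cap\twokp$. Its integer points are $\{x\in\rect\colon x\preceq\theta,\ x_{n}\ge\gamma_{n}+1\}$; each such point has $x_{n}>\gamma_{n}$, hence $x\succ\gamma$, so it lies in $\twokp$ as well as $\onekp$, giving $U_{1}\cap\ints^{n}\subseteq\onekp\cap\twokp$. The main obstacle is the integrality itself: cutting the integral polytope $\cokp$ with the hyperplane $x_{n}=\gamma_{n}+1$ could in principle create fractional vertices. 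I expect to rule this out as follows. A new vertex lies in the relative interior of an edge $[u,w]$ of $\cokp$ that crosses $x_{n}=\gamma_{n}+1$; since $\gamma_{n}+1\le\theta_{n}-1$, a neighborhood of the crossing point lies in the slab $\{x_{n}\le\theta_{n}-1\}$, where the box lemma identifies $\cokp$ with a box. Because $F\cap\{x_{n}\le\theta_{n}-1\}$ is a face of that box for every face $F$ of $\cokp$, the portion of $[u,w]$ near the crossing is an edge of the box, and a box edge that changes the value of $x_{n}$ is parallel to $\onevec[n]$; hence the crossing point inherits the integral coordinates $x_{1},\dots,x_{n-1}$ of $u$ and has $x_{n}=\gamma_{n}+1\in\ints$. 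Thus every vertex of $U_{1}$ is integral, so $U_{1}=\conv(U_{1}\cap\ints^{n})\subseteq\conv(\onekp\cap\twokp)$, and the analogous argument for $U_{2}$ closes the chain.
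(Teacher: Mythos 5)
Your proof is correct, and at the top level it follows the same route as the paper: the chain $U_{1}\cup U_{2}\subseteq\conv{(\onekp\cap\twokp)}\subseteq\cokp\cap\cotwokp\subseteq U_{1}\cup U_{2}$, with the covering of $\cokp\cap\cotwokp$ by the two caps resting on $\gamma_{n}+1\le\theta_{n}-1$ exactly as in the paper. The genuine difference is in how $U_{1}\cup U_{2}\subseteq\conv{(\onekp\cap\twokp)}$ is justified. The paper looks at the extreme points of $U_{1}=\cokp\cap\{x\colon x_{n}\ge\gamma_{n}+1\}$ and cites Theorem~\ref{thm:conv} together with \eqref{facetextpts} to assert in one line that they are integral with $x_{n}\in\{\gamma_{n}+1,\theta_{n}-1,\theta_{n}\}$, hence sandwiched between $\gamma$ and $\theta$. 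You instead isolate a box lemma --- $\cokp\cap\{x\colon x_{n}\le\theta_{n}-1\}$ is the full box obtained by truncating the last coordinate at $\theta_{n}-1$ --- proved by maximizing each packing inequality over that box and telescoping with Observation~\ref{obs:phi}, and you use it twice: to get $U_{1}\subseteq\cotwokp$ directly (the paper only obtains this implicitly through the convex-hull chain), and to show that any vertex of $U_{1}$ created by the cut lies on an axis-parallel edge of the box and is therefore integral. This costs an extra lemma but makes fully explicit the integrality of the cut polytope, which is precisely the step the paper compresses into a citation of \eqref{facetextpts}; it also transfers to $U_{2}$ after complementing variables. One small imprecision: when $\gamma_{n}+1=\theta_{n}-1$ a full neighborhood of the crossing point need not lie in the slab $\{x\colon x_{n}\le\theta_{n}-1\}$, but your argument only needs the lower portion of the crossed edge, which does lie there, so nothing breaks.
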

\begin{proof}
$\conv{(\onekp\cap\twokp)} \subseteq \cokp\cap\cotwokp$ is obvious. Take $y\in\cokp\cap\cotwokp$. If $y_{n}\ge\gamma_{n}+1$ then $y\in\cokp\cap\{x\colon x_{n}\ge\gamma_{n}+1\}$; otherwise $y_{n} < \gamma_{n}+1$ and the assumption $\gamma_{n}+1\le\theta_{n}-1$ implies that $y\in\cotwokp\cap\{x\colon x_{n}\le\theta_{n}-1\}$. Now let $y$ be an extreme point of $\cokp\cap\{x\colon x_{n}\ge\gamma_{n}+1\}$. Theorem~\ref{thm:conv} and equation~\eqref{facetextpts} imply that $y\in\onekp$ with $y_{n}\in\{\gamma_{n}+1,\theta_{n}-1,\theta_{n}\}$. Since $\gamma_{n}+1\le\theta_{n}-1$, it follows that $\gamma \preceq y \preceq \theta$ and identity \eqref{interlex} establishes $y \in \conv{(\kp\cap\twokp)}$. The arguments for $y\in\cotwokp\cap\{x\colon x_{n}\le\theta_{n}-1\}$ are similar. Thus $\{x\in\cokp\colon x_{n}\ge \gamma_{n}+1 \} \cup \{x\in\cotwokp\colon x_{n}\le\theta_{n}-1 \}\subseteq \conv{(\onekp\cap\twokp)}$, thereby completing our proof.
\end{proof}
The above proof heavily relies on the assumption $\gamma_{n}\le\theta_{n}-2$. In particular, if $\gamma_{n}=\theta_{n}-1$, then we can only show that $\{x\in\cokp\colon x_{n}\ge \gamma_{n}+1 \} \cup \{x\in\cotwokp\colon x_{n}\le\theta_{n}-1 \} \subseteq \cokp\cap\cotwokp$ but cannot argue the $\supseteq$-inclusion using the above steps. Hence the case $\gamma_{n}=\theta_{n}-1$ requires a different proof technique, which is presented next.
}

\subsection{$\gamma_{n}=\theta_{n}-1$}\label{sec:case2}
\green{We start by writing a disjunctive representation of $\onekp\cap\twokp$, somewhat similar in vein to the first equality in Proposition~\ref{prop:case1}}.
\begin{lem}\label{lem:twokpunion}
$\onekp\cap\twokp = \{x\in\onekp\colon x_{n}=\theta_{n} \} \cup \{x\in\twokp \colon \blue{x_{n}=\theta_{n}-1} \}$.
\end{lem}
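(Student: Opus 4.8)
The plan is to reduce everything to the purely lexicographic descriptions $\onekp = \{x\in\rect\colon x\preceq\theta\}$ and $\twokp = \{x\in\rect\colon x\succeq\gamma\}$, which follow from Proposition~\ref{prop:lexord2} (the latter after complementing the variables of the $\ge$-knapsack, exactly as in the derivation of \eqref{interlex}). The single structural fact that drives the whole argument is that the lexicographic comparison is decided first by the top coordinate $x_n$: for any $x\in\rect$, the relation $x\preceq\theta$ forces $x_n\le\theta_n$, and $x\succeq\gamma$ forces $x_n\ge\gamma_n$; conversely, $x_n<\theta_n$ already guarantees $x\prec\theta$, and $x_n>\gamma_n$ already guarantees $x\succ\gamma$, regardless of the lower coordinates.

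For the inclusion $\subseteq$, I would take $x\in\onekp\cap\twokp$. By \eqref{interlex} we have $\gamma\preceq x\preceq\theta$, so the top-coordinate observation gives $\gamma_n\le x_n\le\theta_n$. Since we are in the case $\gamma_n=\theta_n-1$, this pins $x_n$ to exactly two values: if $x_n=\theta_n$ then $x$ lies in $\{y\in\onekp\colon y_n=\theta_n\}$, and if $x_n=\theta_n-1$ then $x$ lies in $\{y\in\twokp\colon y_n=\theta_n-1\}$.

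For the reverse inclusion $\supseteq$, I would treat the two pieces of the union separately, invoking the individual descriptions of $\onekp$ and $\twokp$ rather than \eqref{interlex}. If $x\in\onekp$ with $x_n=\theta_n$, then $x_n=\theta_n>\theta_n-1=\gamma_n$, so the top coordinate alone forces $x\succ\gamma$; hence $x\in\twokp$ and therefore $x\in\onekp\cap\twokp$. Symmetrically, if $x\in\twokp$ with $x_n=\theta_n-1<\theta_n$, the top coordinate forces $x\prec\theta$, so $x\in\onekp$ and again $x\in\onekp\cap\twokp$.

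There is no serious obstacle here; the entire content is the observation that the hypothesis $\gamma_n=\theta_n-1$ collapses the admissible range of $x_n$ to the two adjacent integers $\theta_n-1$ and $\theta_n$, after which the most significant lexicographic coordinate $n$ by itself resolves each membership question. The only point requiring a little care is to use the individual lexicographic characterizations of $\onekp$ and $\twokp$ when proving $\supseteq$, since the two sets in the union are defined through the individual knapsacks and not through their intersection.
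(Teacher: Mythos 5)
Your proposal is correct and follows essentially the same route as the paper: both proofs split on the value of $x_{n}$, which the hypothesis $\gamma_{n}=\theta_{n}-1$ pins to the two integers $\theta_{n}-1$ and $\theta_{n}$, and both then absorb each piece into the other knapsack using the lexicographic characterizations behind \eqref{interlex}. The only cosmetic difference is that the paper establishes $\{x\in\rect\colon x_{n}=\theta_{n}\}\subseteq\twokp$ by an explicit weight computation ($\wwt_{n}\theta_{n}\ge\sum_{i=1}^{n-1}\wwt_{i}\ub_{i}+\wwt_{n}(\theta_{n}-1)\ge\rrhs$ via superincreasingness), whereas you obtain the same inclusion directly from $x_{n}>\gamma_{n}\implies x\succ\gamma$.
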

\begin{proof}
Since $x_{n}\le\ub_{n}=\theta_{n}$ for any $x\in\onekp\cap\twokp$, the elementary disjunction $\{x_{n} \le \theta_{n}-1\}\cup\{x_{n}=\theta_{n}\}$ gives us 
$\onekp\cap\twokp =\{x\in\onekp\cap\twokp \colon x_{n}\le\theta_{n}-1\} \cup  \{x\in\onekp\cap\twokp \colon x_{n}=\theta_{n} \}$.
For any $x\in\rect$ such that $x_{n}\le\theta_{n}-1$, we have $x\prec\theta$ and hence $\onekp\cap\{x\colon x_{n}\le\theta_{n}-1\} = \rect\cap\{x\colon x_{n}\le\theta_{n}-1\}$. Since $\twokp=\{x\in\rect\colon x \succeq \gamma\}$ \blue{and $\gamma_{n}=\theta_{n}-1$}, we get $\onekp\cap\twokp\cap\{x\colon x_{n}\le\theta_{n}-1\} = \twokp\cap\{x\colon \blue{x_{n}=\theta_{n}-1}\}$. Next, note that $\wwt_{n}\theta_{n}\ge \sum_{i=1}^{n-1}\wwt_{i}\ub_{i} + \wwt_{n}(\theta_{n}-1) \ge \rrhs$. Then it follows that $\{x\in\rect\colon x_{n}=\theta_{n}\}\subset\twokp$ and we get $\onekp\cap\twokp \cap \{x\colon x_{n}=\theta_{n}\} = \onekp\cap\{x\colon x_{n}=\theta_{n}\}$.
\end{proof}
Lemma~\ref{lem:twokpunion} will be crucial in completing the proof of this case in \textsection\ref{sec:proof}. \green{The proposed disjunction is depicted in Figure~\ref{fig:case2} for $\Re^{2}$. It is easy to see that the two nontrivial facets obtained by convexifying this union in $\Re^{2}$ are exactly the packing inequalities for $\{x\in\rect[2]\colon x\preceq\theta\}$ and $\{x\in\rect[2]\colon x\succeq\gamma\}$. Motivated by this illustration, our approach is use the extended formulation of \citet{balas1998disjunctive} to convexify the union in Lemma~\ref{lem:twokpunion} and argue that every fractional point in $\cokp\cap\cotwokp$, i.e. $x\in \cokp\cap\cotwokp$ such that $x_{n}=\theta_{n}-\epsilon$ for some $\epsilon\in(0,1)$, belongs to the convex hull of $\onekp\cap\twokp$.  To do so, we must characterize points in the $\epsilon$-restrictions of $\cokp$ and $\cotwokp$. This is achieved in the next lemma}. Recall from Definition~\ref{defn:I} that we denote the support of $\theta$ as $\I = \{i_{1},\ldots,i_{\r},i_{\r+1}=n\}$ for some $\r \ge 0$.

\begin{lem}\label{lem:slice1}
Let $\epsilon\in(0,1)$ and $x\in[\zerovec,\ub]$ such that $x_{n}=\theta_{n}-\epsilon$. Define $\delta_{i} :=\min\{\epsilon\ub_{i},x_{i}\}$ for all $i\le i_{\r}$ and $\delta_{i} := x_{i}$ for all $i > i_{\r}$.
\begin{enumerate}
\item If $x\in\cokp$, then $x_{j} - \delta_{j} - \theta_{j}(1-\epsilon) + \sum_{i\in\Ij\setminus n}\phi_{j}(i)(x_{i}-\delta_{i} - \theta_{i}(1-\epsilon)) \,\le\, 0$ for every $j\in N\setminus n$.
\item If $x\in\cotwokp$, then $\delta_{j} + \sum_{i\in\Tj\setminus n}\Phi_{j}(i)\delta_{i} \,\ge\, \epsilon\Phi_{j}(n)$ for every $j\in N\setminus n$.
\end{enumerate}
\end{lem}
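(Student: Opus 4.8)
The plan is to read both inequalities as the packing inequalities of a \emph{disjunctive split} of $x$ at coordinate $n$, mirroring the decomposition $\onekp\cap\twokp=\{x\in\onekp\colon x_{n}=\theta_{n}\}\cup\{x\in\twokp\colon x_{n}=\theta_{n}-1\}$ of Lemma~\ref{lem:twokpunion}. Concretely, for $i<n$ put $y_{i}:=(x_{i}-\delta_{i})/(1-\epsilon)$ and $z_{i}:=\delta_{i}/\epsilon$, and set $y_{n}:=\theta_{n}$, $z_{n}:=\gamma_{n}$. The choice $\delta_{i}=\min\{\epsilon\ub_{i},x_{i}\}$ is exactly what forces $y_{i}\ge 0$ and $z_{i}\le\ub_{i}$, and it gives $x=(1-\epsilon)y+\epsilon z$ on coordinates $i<n$; coordinate $n$ splits as $\theta_{n}-\epsilon=(1-\epsilon)\theta_{n}+\epsilon\gamma_{n}$ because $\gamma_{n}=\theta_{n}-1$. (For $i>i_{\r}$ the $\le$-packing inequality of $x$ forces $x_{i}\le\epsilon\ub_{i}$, so the convention $\delta_{i}=x_{i}$ there is consistent with $\min\{\epsilon\ub_{i},x_{i}\}$ and yields $y_{i}=0$, matching $\theta_{i}=0$.)

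First I would show that, after dividing by $1-\epsilon$ (resp.\ $\epsilon$), the two displayed inequalities are \emph{precisely} the $j$-th packing inequality of Proposition~\ref{prop:valid} evaluated at $y$ (resp.\ at $z$), with the index-$n$ term vanishing since $y_{n}=\theta_{n}$ (resp.\ $z_{n}=\gamma_{n}$). For part~(1) this uses the $i=n$ case of Observation~\ref{obs:phi}, namely $\theta_{j}+\phi_{j}(n)+\sum_{i\in\Ij\setminus n}\phi_{j}(i)\theta_{i}=\ub_{j}+\sum_{i\in\Ij\setminus n}\phi_{j}(i)\ub_{i}$; for part~(2) it uses the $\ge$-analogue $\Phi_{j}(n)=\gamma_{j}+\sum_{i\in\Tj\setminus n}\Phi_{j}(i)\gamma_{i}$, obtained by applying Observation~\ref{obs:phi} to the superincreasing polytope $\cotwokp$. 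Thus each part is equivalent to the single assertion that $y$ satisfies the packing inequalities of $\cokp$ and $z$ those of $\cotwokp$, and no appeal to box-membership of $z$ is needed because a $\ge$-inequality is only helped by enlarging $z$.

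The substance is verifying these inequalities for the transformed points. Subtracting the $j$-th packing inequality of $x$ (valid since $x\in\cokp$, resp.\ $x\in\cotwokp$) from the one wanted for $y$ (resp.\ $z$), and using $x=(1-\epsilon)y+\epsilon z$, part~(1) becomes $R_{j}+Q_{j}\ge\epsilon\phi_{j}(n)$, where $R_{j}\ge 0$ is the slack of $x$ in its $j$-th inequality and $Q_{j}:=(x_{j}-y_{j})+\sum_{i\in\Ij\setminus n}\phi_{j}(i)(x_{i}-y_{i})\ge 0$ (each $x_{i}-y_{i}=(\delta_{i}-\epsilon x_{i})/(1-\epsilon)\ge 0$) records how much the cap lowered the coordinates at and below $j$; part~(2) is symmetric with $\epsilon$ and $1-\epsilon$ interchanged. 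I would prove this by downward induction on $j$ through the support, driven by the recursion $\phi_{j}(\next{i})=\phi_{j}(i)(\ub_{i}+1-\theta_{i})$, together with a case split on the \emph{capping pattern} $\{i\colon x_{i}>\epsilon\ub_{i}\}$: a capped coordinate ($\delta_{i}=\epsilon\ub_{i}$) is charged to a packing inequality of $x$, while an uncapped coordinate ($\delta_{i}=x_{i}$, so $y_{i}=0$) is charged to the bound $x_{i}\ge 0$ and the fact that $\theta_{i}\ge 1$ on the support $\I$. The guiding computation is that when every relevant coordinate is capped, the $j$-th packing inequality of $x$ reproduces the target with equality.

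The main obstacle is exactly this capping: the map $x_{i}\mapsto[x_{i}-\epsilon\ub_{i}]^{+}/(1-\epsilon)$ is nonlinear, so neither plain monotonicity nor a single packing inequality of $x$ suffices. When a coordinate below $j$ is capped while $j$ itself is not, the deficit at $j$ must be paid for by the surplus the cap creates higher up, and this cross-coordinate compensation is what the induction has to carry; a bare ``$\le 0$'' hypothesis is too weak, and one must instead retain how negative the previous slice inequality is (equivalently, track $R_{j}$ rather than discarding it). Once the induction is organized with this strengthened hypothesis, part~(2) follows by the identical argument applied to $\cotwokp$, whose coefficients $\Phi_{j}$ and greedy vector $\gamma$ obey the same recursion.
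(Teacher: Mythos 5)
Your setup is right and matches the role the lemma plays in the paper: with $y_{i}=(x_{i}-\delta_{i})/(1-\epsilon)$, $z_{i}=\delta_{i}/\epsilon$ for $i<n$ and $y_{n}=\theta_{n}$, $z_{n}=\gamma_{n}$, the two displayed inequalities are precisely the $j$-th packing inequalities evaluated at $y$ and at $z$, your reduction of part~(1) to $R_{j}+Q_{j}\ge\epsilon\phi_{j}(n)$ is correct, and the base cases ($j\ge i_{\r}$, or all relevant coordinates capped) work exactly as you say. The paper's proof has the same skeleton: induction on $j$, a case split on the capping pattern, the packing inequality of $x$ in the all-capped case, and $\theta_{s}\ge 1$ for the first uncapped support index $s$.

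The gap is that the inductive step --- the only hard part --- is described as a charging heuristic but never closed. For a capped $i\in\Ij$ one can have $y_{i}$ as large as $\ub_{i}>\theta_{i}$, so $\phi_{j}(i)(y_{i}-\theta_{i})>0$, and this surplus must be absorbed by negativity coming from indices above $i$; your plan does not say how. The paper does it with a telescoping identity (Lemma~\ref{lem:facetcoeff1}, parts (2)--(3)): for any $s\in\Ij\setminus n$,
\[
\sum_{\substack{i\in\Ij\setminus n\\ i\ge s}}\phi_{j}(i)\bigl(x_{i}-\delta_{i}-\theta_{i}(1-\epsilon)\bigr)
\;=\;\phi_{j}(s)\sum_{\substack{i\in\Ij\setminus n\\ i\ge s}}\Bigl[x_{i}-\delta_{i}-\theta_{i}(1-\epsilon)+\sum_{k\in\Ij[i]\setminus n}\phi_{i}(k)\bigl(x_{k}-\delta_{k}-\theta_{k}(1-\epsilon)\bigr)\Bigr],
\]
whose right-hand side is $\phi_{j}(s)$ times a sum of left-hand sides of the very inequality being proved at higher indices, so the plain ``$\le 0$'' induction hypothesis bounds every tail sum by $0$. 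This identity (itself a nontrivial induction built on $\phi_{j}(i)=\phi_{j}(s)[1+\sum_{s\le k<i}\phi_{k}(i)]$) is the missing ingredient; without it, the step ``charge a capped coordinate to a packing inequality of $x$'' has no concrete target, since $x$ supplies only the single global slack $R_{j}$ at level $j$ and does not localize the compensation. Your diagnosis that the bare $\le 0$ hypothesis is too weak and must be strengthened to track slacks is also off: the paper's induction closes with the unstrengthened hypothesis, while the strengthened hypothesis you propose is never specified, so the argument as written does not terminate. Part~(2) has the same issue (there the case split is on the smallest \emph{capped} index in $\Tj\setminus n$ and the key bound is $\gamma_{s}\le\ub_{s}-1$).
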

\green{To prove this technical lemma, we need to exploit the recursive nature of $\phi_{j}(\cdot)$ and $\Phi_{j}(\cdot)$ so that we can rearrange expressions suitably. The following lemma gives us the required result; its proof is a tedious algebraic exercise and is hence relegated to \ref{app:one}.}

\renewcommand{\x}{\epsilon}
\renewcommand{\delta}{z}

\begin{lem}\label{lem:facetcoeff1}
Let $j\in N\setminus n$ and $\x \neq 0$. 
\begin{enumerate}
\item For $\i,i\in\Ij$ with $\i < i$, $\phi_{j}(i) =  \phi_{j}(\i)\left[1 + \sum_{\substack{k\in\Ij\\ \i\le k < i}}\phi_{k}(i) \right]$.
\item For $\i \in\Ij\setminus n$, $\sum_{\substack{i\in\Ij\setminus n\\i\ge\i}}\phi_{j}(i)(x_{i}-\delta_{i}-\theta_{i}\x)=\phi_{j}(\i)\,\sum_{\substack{i\in\Ij\setminus n\\i\ge\i}}\left[x_{i}-\delta_{i}-\theta_{i}\x+\sum_{k\in\Ij[i]\setminus n}\phi_{i}(k)(x_{k}-\delta_{k} - \theta_{k}\x) \right]$.
\item For $\i\in\Tj\setminus n$,
$\sum_{\substack{i\in\Tj\setminus n\\i\ge\i}}\Phi_{j}(i)\left(\gamma_{i}\x - \delta_{i}\right) =\Phi_{j}(\i)\,\sum_{\substack{i\in\Tj\setminus n\\i\ge\i}}\left[\x\,\Phi_{i}(n) - \delta_{i} - \sum_{k\in\Tj[i]\setminus n}\Phi_{i}(k)\delta_{k} \right]$.
\end{enumerate}
\end{lem}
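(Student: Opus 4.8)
The plan is to treat part~(1) as the engine and obtain parts~(2) and (3) from it by interchanging the order of summation, with one additional telescoping identity needed for part~(3).

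For part~(1) I would induct on $i$ as it runs upward through $\Ij$. In the base case $i=\next{s}$ the sum over $\{k\in\Ij\colon s\le k<i\}$ collapses to the single term $k=s$; using the convention $\phi_{s}(\next{s})=\ub_{s}-\theta_{s}$ and Observation~\ref{obs:phi} gives $\phi_{j}(\next{s})=(\ub_{s}+1-\theta_{s})\,\phi_{j}(s)=\phi_{j}(s)\,[1+\phi_{s}(\next{s})]$. For the step from $i$ to $\next{i}$ I would split off the new term $k=i$, apply Observation~\ref{obs:phi} in the form $\phi_{k}(\next{i})=(\ub_{i}+1-\theta_{i})\,\phi_{k}(i)$ (valid for every base $k<i$, since $\next{i}$ is independent of the base) together with $\phi_{i}(\next{i})=\ub_{i}-\theta_{i}$, and then factor out $(\ub_{i}+1-\theta_{i})$ to reduce the claim at $\next{i}$ to the induction hypothesis at $i$ combined with $\phi_{j}(\next{i})=(\ub_{i}+1-\theta_{i})\,\phi_{j}(i)$.

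For part~(2), abbreviate $b_{i}:=x_{i}-z_{i}-\theta_{i}\epsilon$. On the right-hand side I would expand the nested sum and interchange the order of summation, so that the coefficient of each $b_{k}$ (for $k\in\Ij\setminus n$ with $k\ge s$) becomes $\phi_{j}(s)\big[1+\sum_{i\in\Ij,\,s\le i<k}\phi_{i}(k)\big]$; here the restriction $i\in\Ij\setminus n$ is automatic since $i<k<n$. By part~(1) (trivially for $k=s$) this coefficient is exactly $\phi_{j}(k)$, and summing over $k$ returns the left-hand side. Part~(3) runs along the same lines after I first record the $\Phi$-analog of part~(1), namely $\Phi_{j}(i)=\Phi_{j}(s)\big[1+\sum_{k\in\Tj,\,s\le k<i}\Phi_{k}(i)\big]$ for $s<i$ in $\Tj$; its proof is the identical induction, since from the product definition $\Phi_{j}(\cdot)$ gains a factor $\gamma_{i}+1$ at each successor in $\T$ and $\Phi_{i}(\cdot)$ evaluated at the successor of $i$ equals $\gamma_{i}$. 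Splitting part~(3) into its $z$-part and its $\epsilon$-part, the $z$-part is handled exactly as in part~(2) and yields $-\sum_{i}\Phi_{j}(i)z_{i}$ on both sides. What then remains is the scalar identity $\sum_{i}\Phi_{j}(i)\gamma_{i}=\Phi_{j}(s)\sum_{i}\Phi_{i}(n)$, both sums over $i\in\Tj\setminus n$ with $i\ge s$, which I would prove by writing $\gamma_{i}=(\gamma_{i}+1)-1$ so that each side telescopes through the products defining $\Phi$, both collapsing to $\gamma_{j}\prod_{k\in\Tj,\,k<n}(\gamma_{k}+1)-\Phi_{j}(s)$.

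The bulk of the work is routine but error-prone index bookkeeping: keeping track of the $\setminus n$ restrictions and correctly identifying the telescoped boundary terms (that the largest element of $\Tj$ below $n$ terminates the product while $\Phi_{j}(s)$ remains as the leftover factor). The one genuinely non-mechanical step, and the main obstacle, is the telescoping identity in part~(3): unlike parts~(1) and (2) it does not follow from the combinatorial recursion alone but requires recognizing the product-difference structure $\Phi_{j}(i)\gamma_{i}=\gamma_{j}\big(\prod_{k\in\Tj,\,k\le i}(\gamma_{k}+1)-\prod_{k\in\Tj,\,k<i}(\gamma_{k}+1)\big)$ that makes both sums collapse.
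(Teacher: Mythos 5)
Your proof is correct, and all three parts check out: the inductive proof of part (1) is just the paper's one-line product expansion $\prod_{k}(\ub_{k}+1-\theta_{k})=1+\sum_{k}(\ub_{k}-\theta_{k})\prod_{t>k}(\ub_{t}+1-\theta_{t})$ unrolled step by step, and your base/successor cases use Observation~\ref{obs:phi} exactly as the paper does. Where you genuinely diverge is in parts (2) and (3): the paper proves (2) by induction on the cardinality of $\{i\in\Ij\setminus n\colon i\ge s\}$, peeling off the largest index and re-absorbing it via part (1), and then dismisses (3) as ``similar''; you instead expand the right-hand side and interchange the order of summation, so that the coefficient of each $x_{k}-z_{k}-\theta_{k}\epsilon$ is read off as $\phi_{j}(s)\bigl[1+\sum_{i}\phi_{i}(k)\bigr]=\phi_{j}(k)$ directly from part (1). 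The two arguments perform the same bookkeeping, but your Fubini-style version is arguably more transparent and, importantly, forces you to actually supply the content of the paper's ``similarly'' for part (3): you correctly identify that the $z$-terms reduce to the part-(2) mechanism via the $\Phi$-analog of part (1), while the $\epsilon$-terms require the separate scalar identity $\sum_{i}\Phi_{j}(i)\gamma_{i}=\Phi_{j}(s)\sum_{i}\Phi_{i}(n)$, which your telescoping $\Phi_{j}(i)\gamma_{i}=\gamma_{j}\bigl(\prod_{k\le i}(\gamma_{k}+1)-\prod_{k<i}(\gamma_{k}+1)\bigr)$ establishes (both sides collapse to $\Phi_{j}(n)-\Phi_{j}(s)$). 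One small remark: that last identity also follows mechanically from the expansion $\Phi_{i}(n)=\gamma_{i}+\sum_{k\in\Tj[i]\setminus n}\Phi_{i}(k)\gamma_{k}$ (the paper's Claim on $\Phi_{j}(n)$ in the appendix) followed by the same summation interchange, so the step you flag as the main obstacle can be absorbed into your general scheme if you prefer.
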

\begin{proof}
In \ref{app:one}.
\end{proof}

\begin{proof}[Proof of Lemma~\ref{lem:slice1}]
\green{We prove the first part here; arguments for the second part are analogous and are provided in \ref{app:one} for completeness}. Choose $j\in N\setminus n$. If $\theta_{j} = \ub_{j}$, then the inequality is obvious because $\phi_{j}(\cdot) = 0$. Assume $\theta_{j} < \ub_{j}$. For $j\ge i_{\r}$, the inequality holds because $\Ij\setminus n=\emptyset$ and the $j^{th}$ nontrivial facet in $\cokp \cap \{x\colon x_{n}=\theta_{n}-\epsilon\}$ can be written as $x_{j} - \epsilon\ub_{j} \le \theta_{j}(1-\epsilon)$. Now consider $j \le i_{\r}-1$ and assume that the inequality holds for all $\imath = j+1,\ldots,n-1$. 

\begin{claim}\label{claim:indhypo}
For any $\i\in\Ij\setminus n$, we have $\sum_{i\in\Ij\setminus n\colon i \ge \i}\phi_{j}(i)(x_{i}-\delta_{i}-\theta_{i}(1-\epsilon)) \:\le\: 0$. Follows from Lemma~\ref{lem:facetcoeff1}, $\phi_{j}(\i)\ge 0$ and induction hypothesis. 
\qquad\myqed
\end{claim}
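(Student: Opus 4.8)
The plan is to reduce this claim directly to the outer induction hypothesis of the proof of Lemma~\ref{lem:slice1}, using the coefficient rearrangement of Lemma~\ref{lem:facetcoeff1}(2) as the engine. First I would observe that the identity in Lemma~\ref{lem:facetcoeff1}(2) is in fact \emph{linear} in the quantities to which it is applied: its validity rests solely on part~(1) of that lemma, which is an identity about the coefficients $\phi$ alone. Consequently I may invoke it with the arguments $x_{i}-\delta_{i}-\theta_{i}(1-\epsilon)$ in place of the $x_{i}-\delta_{i}-\theta_{i}\epsilon$ under which it was stated, obtaining
\[
\sum_{\substack{i\in\Ij\setminus n\\ i\ge\i}}\phi_{j}(i)\bigl(x_{i}-\delta_{i}-\theta_{i}(1-\epsilon)\bigr)
\;=\;
\phi_{j}(\i)\sum_{\substack{i\in\Ij\setminus n\\ i\ge\i}}\left[\,x_{i}-\delta_{i}-\theta_{i}(1-\epsilon)+\sum_{k\in\Ij[i]\setminus n}\phi_{i}(k)\bigl(x_{k}-\delta_{k}-\theta_{k}(1-\epsilon)\bigr)\right].
\]

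Next I would identify the bracketed expression. For every $i$ in the outer sum we have $i\in\Ij\setminus n$, so $j<i<n$, i.e.\ $i\in\{j+1,\dots,n-1\}$; the bracket is then precisely the left-hand side of the part~(1) inequality of Lemma~\ref{lem:slice1} written for the index $i$. Since the surrounding argument is a downward induction on $j$ whose hypothesis grants part~(1) for all indices $j+1,\dots,n-1$, each such bracket is $\le 0$. Combining this with $\phi_{j}(\i)\ge 0$, which is immediate from the definition \eqref{newphi} (a product of nonnegative factors $\ub_{k}+1-\theta_{k}$ scaled by $\ub_{j}-\theta_{j}\ge 0$), the right-hand side displayed above is a nonnegative scalar multiplying a sum of nonpositive terms, hence $\le 0$. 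This is exactly the asserted inequality.

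I expect the only delicate point to be the appeal to Lemma~\ref{lem:facetcoeff1}(2): because it is stated with the summands $x_{i}-\delta_{i}-\theta_{i}\epsilon$, I must make explicit that the rearrangement does not depend on the particular form of those summands but only on the coefficient recursion of part~(1), so that it transfers verbatim to the $(1-\epsilon)$-shifted quantities appearing here. Once that linearity observation is in place, matching the bracket to the induction hypothesis and invoking nonnegativity of $\phi_{j}(\i)$ are routine, so no genuine obstacle remains.
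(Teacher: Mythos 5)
Your proposal is correct and follows exactly the route the paper intends: apply the rearrangement identity of Lemma~\ref{lem:facetcoeff1}(2) (whose scalar parameter is a free nonzero quantity, so substituting $1-\epsilon$ is legitimate — your linearity observation makes this explicit), recognize each bracketed term as the left-hand side of Lemma~\ref{lem:slice1}(1) for an index $i\in\{j+1,\dots,n-1\}$ covered by the downward induction hypothesis, and conclude via $\phi_{j}(\i)\ge 0$. No gaps.
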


First suppose that $\delta_{j}=\epsilon\ub_{j}$. Let $\i\in\Ij\setminus n$ be the smallest index such that $\delta_{\i}=x_{\i}$. If $\i$ does not exist, then $\delta_{i}=\epsilon\ub_{i} \ \forall i\in\Ij\setminus n$ and upon rearranging, we must show that 
\[
x_{j}\;+\;\sum_{i\in\Ij\setminus n}\phi_{j}(i)(x_{i}-\theta_{i})\;-\;\epsilon\left[\ub_{j}-\theta_{j}\;+\;\sum_{i\in\Ij\setminus n}\phi_{j}(i)(\ub_{i}-\theta_{i}) \right] \;\le\;\theta_{j},
\]
which is exactly the $j^{th}$ nontrivial facet of $\cokp\cap\{x\colon x_{n}=\theta_{n}-\epsilon \}$ due to $\phi_{j}(n) = \ub_{j}-\theta_{j}+\sum_{i\in\Ij\setminus n}\phi_{j}(i)(\ub_{i}-\theta_{i})$ from Observation~\ref{obs:phi}. Now suppose $\i$ exists. Rewriting the desired inequality, we must show that
\[
\frac{x_{j}-\theta_{j}}{\ub_{j}-\theta_{j}}-\epsilon \;\le\; \Lambda \;:=\;-\,\sum_{i\in\Ij\setminus n}\frac{\phi_{j}(i)}{\ub_{j}-\theta_{j}}(x_{i}-\delta_{i}-\theta_{i}(1-\epsilon)).
\]
It suffices to verify that $\Lambda \ge 1-\epsilon$. We split the summation over $\Ij\setminus n$ into three parts: first over all $i < \i$, then $\i$, and third over all $i > \i$, and use the first part of Lemma~\ref{lem:facetcoeff1} to rewrite $\phi_{j}(i)/(\ub_{j}-\theta_{j}) \ \forall i \in \Ij\setminus n$. This gives us
\begin{multline*}
\Lambda\;=\; -\,\sum_{i\in\Ij\colon i < \i}\left[1\,+\,\sum_{k\in\Ij \colon k < i}\phi_{k}(i) \right](x_{i}-\epsilon\ub_{i}-\theta_{i}(1-\epsilon))
\: + \:\left[1\,+\,\sum_{k\in\Ij \colon k < \i}\phi_{k}(\i) \right]\theta_{\i}(1-\epsilon) \\
 \; \:-\, \sum_{i\in\Ij\setminus n\colon i > \i}\left[1\,+\,\sum_{k\in\Ij \colon k < \i}\phi_{k}(i) \,+\,\sum_{\substack{k\in\Ij\colon\\ \i \le k < i}}\phi_{k}(i)\right](x_{i}-\delta_{i}-\theta_{i}(1-\epsilon)).
\end{multline*}
Combining common terms gives us 
\begin{eqnarray*}
\Lambda \:=\: \theta_{\i}(1-\epsilon) - \Lambda_{1} - \Lambda_{2}
\::=\: \theta_{\i}(1-\epsilon) &-&\sum_{i\in\Ij\colon i < \i}\left[x_{i}-\epsilon\ub_{i}-\theta_{i}(1-\epsilon)\,+\,\sum_{t\in\Ij[i]\setminus n}\phi_{i}(t)(x_{t}-\delta_{t}-\theta_{t}(1-\epsilon))\right] \\
&-& \sum_{i\in\Ij\setminus n\colon i > \i}\left[1\,+\, \sum_{\substack{k\in\Ij\colon\\ \i \le k < i}}\phi_{k}(i) \right](x_{i}-\delta_{i}-\theta_{i}(1-\epsilon)).
\end{eqnarray*}
The induction hypothesis implies $\Lambda_{1}\le 0$. First statement of Lemma~\ref{lem:facetcoeff1} implies $\Lambda_{2} = \frac{1}{\phi_{j}(\i)}\sum_{i\in\Ij\setminus n\colon i > \i}\phi_{j}(i)(x_{i}-\delta_{i}-\theta_{i}(1-\epsilon))$. Claim~\ref{claim:indhypo} now gives us $\Lambda_{2}\le 0$. Thus $\Lambda \ge \theta_{\i}(1-\epsilon) \ge 1 -\epsilon$ since $\i\in\Ij$ and $\theta_{\i}\ge 1$.

Finally, let $\delta_{j}=x_{j}$. Then Claim~\ref{claim:indhypo} gives us $-\theta_{j}(1-\epsilon) + \sum_{i\in\Ij\setminus n}\phi_{j}(i)(x_{i}-y_{i}-\theta_{i}(1-\epsilon)) \,\le\, 0$.
\end{proof}

\renewcommand{\P}{\mathcal{P}}
\subsection{Proving the distributive property}\label{sec:proof}
\begin{proof}[Proof of Theorem~\ref{thm:twokp2}]
\green{The case $\gamma_{n}\le\theta_{n}-2$ is proved in Proposition~\ref{prop:case1}}. Now suppose that $\gamma_{n}=\theta_{n}-1$. It remains to show that $\conv{(\onekp\cap\twokp)}\supseteq\cokp\cap\cotwokp$ since the $\subseteq$-inclusion is obvious. We first obtain an extended formulation for $\conv{(\onekp\cap\twokp)}$. For convenience, define
\begin{equation}\label{defgh}
\g_{j} := \theta_{j} + \sum_{i\in\Ij\setminus n}\phi_{j}(i)\theta_{i} \quad \forall j \in N\colon \theta_{j}\le\ub_{j}-1,	\quad
\h_{j} := \gamma_{j} + \sum_{i\in\Tj\setminus n}\Phi_{j}(i)\gamma_{i} \quad \forall j\in N\colon \gamma_{j} \ge 1.
\end{equation}
\begin{claim}\label{claim:twokpext}
$\conv{(\onekp\cap\twokp)}$ is equal to the projection onto the $x$-space of the polytope
\begin{subequations}\label{correq}
\begin{align}
\P &\;\;:=& \Big\{(x,y)\in\Re^{n}_{+}\times\Re^{n}_{+} \colon  &\zerovec\le y \le x, \: \blue{\theta_{n}-1 \le x_{n}\le\theta_{n},\; y_{n} = (\theta_{n}-1)(\theta_{n}-x_{n})},\;\; x_{i} = y_{i}, \quad i_{\r} < i < n	\notag\\ 
&& & y_{i} + \frac{\ub_{i}}{\theta_{n}}(x_{n} - y_{n}) \le \ub_{i}, \quad   i< n, \quad y_{i} - x_{i} + \frac{\ub_{i}}{\theta_{n}}(x_{n} - y_{n}) \ge 0, \quad  i\le i_{\r}	\label{correq4}\medskip\\
&& & x_{j} - y_{j} \: + \: \displaystyle\sum_{i\in\Ij\setminus n}\phi_{j}(i)(x_{i} - y_{i}) \: - \: \frac{\g_{j}}{\theta_{n}}(x_{n}-y_{n}) \;\le \; 0, \quad  j \in N\colon \theta_{j}<\ub_{j}\label{correq5}\medskip\\
&& & y_{j} \:+\: \displaystyle\sum_{i\in\Tj\setminus n}\Phi_{j}(i)y_{i} \:+\:\frac{\h_{j}}{\theta_{n}}(x_{n} - y_{n}) \; \ge \; \h_{j}, \quad  j \in N\colon \gamma_{j} \ge 1\;\Big\}.\label{correq6}
\end{align}
\end{subequations}
The proof of this claim makes use of Lemma~\ref{lem:twokpunion} and the disjunctive programming result of \citet{balas1998disjunctive}; it is provided in \ref{app:one}.\quad\myqed
\end{claim}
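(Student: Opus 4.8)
The plan is to recognize the system \eqref{correq} as precisely the Balas extended formulation for the two-term disjunction furnished by Lemma~\ref{lem:twokpunion}. Writing $\onekp\cap\twokp = \onekp_{0}\cup\twokp_{0}$ with $\onekp_{0}:=\{x\in\onekp\colon x_{n}=\theta_{n}\}$ and $\twokp_{0}:=\{x\in\twokp\colon x_{n}=\theta_{n}-1\}$, and using that convexifying a union depends only on the convex hulls of its terms, we have $\conv{(\onekp\cap\twokp)} = \conv{(\conv{\onekp_{0}}\cup\conv{\twokp_{0}})}$. Both $\conv{\onekp_{0}}$ and $\conv{\twokp_{0}}$ are nonempty (they contain $\theta$ and $\gamma$ respectively) and bounded, so the disjunctive programming result of \citet{balas1998disjunctive} yields the convex hull of the union exactly as a projection, with no closure required: $\conv{(\onekp\cap\twokp)} = \proj_{x}$ of the lifted polytope in variables $(x,z^{1},z^{2},\lambda_{1},\lambda_{2})$ given by $x=z^{1}+z^{2}$, $\lambda_{1}+\lambda_{2}=1$, $\lambda\ge\zerovec$, the homogenization (scaled by $\lambda_{1}$) of the description of $\conv{\onekp_{0}}$ applied to $z^{1}$, and the homogenization (scaled by $\lambda_{2}$) of the description of $\conv{\twokp_{0}}$ applied to $z^{2}$.

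Next I would pin down the two slice descriptions. Since $\{x_{n}=\theta_{n}\}$ supports the facet $x_{n}\le\ub_{n}=\theta_{n}$ of $\cokp$, the set $\conv{\onekp_{0}}$ is the face $\cokp\cap\{x_{n}=\theta_{n}\}$ (a face of the integral polytope $\cokp$, hence the convex hull of the integer points $\onekp_{0}$). Substituting $x_{n}=\theta_{n}$ into the packing inequalities of Theorem~\ref{thm:conv} annihilates the $\phi_{j}(n)(x_{n}-\theta_{n})$ term and leaves the nontrivial facets $x_{j}+\sum_{i\in\Ij\setminus n}\phi_{j}(i)(x_{i}-\theta_{i})\le\theta_{j}$ for $j$ with $\theta_{j}<\ub_{j}$, together with $\zerovec\le x\le\ub$; in particular the inequality indexed by any $j$ with $i_{\r}<j<n$ collapses to $x_{j}\le 0$ on this slice, forcing $x_{j}=0$. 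Analogously, because $\gamma_{n}=\theta_{n}-1$, the set $\conv{\twokp_{0}}=\cotwokp\cap\{x_{n}=\gamma_{n}\}$ is described from \eqref{convtwokp} by $x_{j}+\sum_{i\in\Tj\setminus n}\Phi_{j}(i)(x_{i}-\gamma_{i})\ge\gamma_{j}$ for $j$ with $\gamma_{j}\ge 1$, together with $\zerovec\le x\le\ub$.

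The heart of the proof is the substitution that turns the lifted system into \eqref{correq}. Set $y:=z^{2}$ and eliminate $z^{1}=x-y$. The homogenized slice equalities $z^{1}_{n}=\lambda_{1}\theta_{n}$ and $z^{2}_{n}=\lambda_{2}(\theta_{n}-1)$, combined with $x_{n}=z^{1}_{n}+z^{2}_{n}$ and $\lambda_{1}+\lambda_{2}=1$, solve the multipliers as $\lambda_{2}=\theta_{n}-x_{n}$ and $\lambda_{1}=(x_{n}-y_{n})/\theta_{n}$; these two expressions for $\lambda_{1}$ are consistent exactly when $y_{n}=(\theta_{n}-1)(\theta_{n}-x_{n})$, and $\lambda\ge\zerovec$ becomes $\theta_{n}-1\le x_{n}\le\theta_{n}$, matching the defining equalities and bounds on $(x_{n},y_{n})$ in \eqref{correq}. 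The vanishing coordinates $z^{1}_{i}=0$ for $i_{\r}<i<n$ give $x_{i}=y_{i}$; the scaled box constraints $z^{1}_{i}\le\lambda_{1}\ub_{i}$ (for $i\le i_{\r}$) and $z^{2}_{i}\le\lambda_{2}\ub_{i}$ (for $i<n$), with $z^{k}\ge\zerovec$ supplied by $\zerovec\le y\le x$, become \eqref{correq4} after substituting $\lambda_{1},\lambda_{2}$; and the scaled packing inequalities for $z^{1}$ and $z^{2}$, upon collecting the $\theta$- and $\gamma$-terms into $\g_{j}$ and $\h_{j}$ from \eqref{defgh}, become exactly \eqref{correq5} and \eqref{correq6}. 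Matching the families coordinate by coordinate against \eqref{correq} then establishes the claim.

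The main obstacle I anticipate is bookkeeping rather than ideas: verifying that every facet of the two slices is accounted for and that the index ranges $i\le i_{\r}$, $i<n$, $\theta_{j}<\ub_{j}$, $\gamma_{j}\ge 1$ in \eqref{correq} are precisely the nontrivial constraints (the remaining scaled box and packing constraints being redundant, e.g. $z^{1}_{i}\le\lambda_{1}\ub_{i}$ for $i_{\r}<i<n$ following from $z^{1}_{i}=0$), and checking carefully that the algebraic substitution of $\lambda_{1},\lambda_{2}$ reproduces the coefficients $\g_{j}/\theta_{n}$ and $\h_{j}/\theta_{n}$ and the right-hand sides without sign errors. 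A secondary point to justify is exactness of Balas's projection here, which rests on both $\conv{\onekp_{0}}$ and $\conv{\twokp_{0}}$ being bounded so that the homogenization introduces no spurious recession directions at $\lambda_{1}=0$ or $\lambda_{2}=0$.
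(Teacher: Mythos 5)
Your proposal is correct and follows essentially the same route as the paper's proof: invoke Lemma~\ref{lem:twokpunion}, identify the two slices as the faces $\cokp\cap\{x_{n}=\theta_{n}\}$ and $\cotwokp\cap\{x_{n}=\gamma_{n}\}$, apply the disjunctive programming result of \citet{balas1998disjunctive} with Theorem~\ref{thm:conv} and \eqref{convtwokp}, and then eliminate the multiplier via $\lambda=(x_{n}-y_{n})/\theta_{n}$. The only difference is cosmetic bookkeeping (your $(z^{1},z^{2},\lambda_{1},\lambda_{2})$ versus the paper's $(x-y,y,\lambda,1-\lambda)$), and your explicit remarks on boundedness and the collapse $x_{i}=y_{i}$ for $i_{\r}<i<n$ are consistent with the paper's formulation.
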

Since $\gamma_{n}=\theta_{n}-1$, we get $\conv{\{x\in\twokp\colon x_{n}=\theta_{n}-1\}} = \cotwokp\cap\{x\colon x_{n}=\gamma_{n}\}$. The fact that  $x_{n}\le\theta_{n}$ defines a face of $\conv{\onekp}$ implies that $\conv{\{x\in\onekp\colon x_{n}=\theta_{n}\}} = \cokp\cap\{x\colon x_{n}=\theta_{n}\}$. \blue{Consider $x\in\cokp\cap\cotwokp$. Note that since $x_{n}\le\theta_{n}$ is valid to $\cokp$ and $x_{n}\ge\gamma_{n}=\theta_{n}-1$ is valid to $\cotwokp$, it must be that $x_{n}\in[\theta_{n}-1,\theta_{n}]$. If $x_{n}\in\{\theta_{n}-1,\theta_{n}\}$, then Lemma~\ref{lem:twokpunion} implies $x\in\conv{\left(\onekp\cap\twokp \right)}$}. Let $x_{n}=\theta_{n}-\epsilon$ for some $\epsilon\in (0,1)$. Fix $y\in\Re^{n}_{+}$ as follows: $y_{i} = \min\{\epsilon\ub_{i},x_{i}\}$ for $i \le i_{\r}$, $y_{i}=x_{i}$ for $i_{\r} < i < n$, and $y_{n} = \epsilon(\theta_{n}-1)$. From Claim~\ref{claim:twokpext}, it suffices to show that $(x,y)\in\P$. By construction, $y$ satisfies \eqref{correq4} and the trivial relations with $x$. Since $x_{n}-y_{n} = \theta_{n}(1-\epsilon)$, \eqref{correq5} and \eqref{correq6}, respectively, are transformed to
\begin{subequations}
\begin{eqnarray}
x_{j} - y_{j} + \displaystyle\sum_{i\in\Ij\setminus n}\phi_{j}(i)(x_{i} - y_{i}) &\le& \g_{j}(1-\epsilon) \quad \forall j \in N\colon \theta_{j}\le\ub_{j}-1 \label{newcheck1}\\
y_{j} + \displaystyle\sum_{i\in\Tj\setminus n}\Phi_{j}(i)y_{i} &\ge& \h_{j}\epsilon \quad \forall j \in N\colon \gamma_{j} \ge 1.	 \label{newcheck2}
\end{eqnarray}
\end{subequations}
Since $\h_{j} = \Phi_{j}(n)$ (analogous to Observation~\ref{obs:phi}), inequality \eqref{newcheck2} becomes $y_{j} + \sum_{i\in\Tj\setminus n}\Phi_{j}(i)y_{i} \ge \epsilon \Phi_{j}(n)$. Then Lemma~\ref{lem:slice1} (with $y$ replacing $\delta$) implies that \eqref{newcheck1} and \eqref{newcheck2} are satisfied. Hence $(x,y)\in\P$. 
\end{proof}
\blue{
\begin{rem}
We believe that the proof used for the difficult case $\gamma_{n}=\theta_{n}-1$ can be modified to handle the case $\gamma_{n}\le\theta_{n}-2$ as well. However our geometric arguments in \textsection\ref{sec:case1} lend more intuition into the structural properties of intersection of $\succeq$ and $\preceq$ ordered cones.
\end{rem}
}

\newcommand{\uub}{\tilde{\ub}}
\newcommand{\thetaone}{\theta_{\lfloor\rhs\rfloor}}
\newcommand{\thetatwo}{\theta_{\lfloor \rhs - \uub \rfloor}}
\newcommand{\thetathr}{\gamma_{\lceil \rhs - \uub \rceil}}
\subsection{\blue{Application of Theorem~\ref{thm:twokp2}}}
\blue{
Consider a mixed integer knapsack with a single continuous variable defined by the set $Q := \{(x,y)\in\rect\times[0,\uub]\colon \sum_{i\in N}\wt_{i}x_{i} + y \le \rhs\}$, where we assume that $\{(\wt_{i},\ub_{i})\}_{i\in N}$ forms a superincreasing sequence of tuples of positive integers and $\uub$ and $\rhs$ are positive reals with $\uub\le\rhs$. It is straightforward to verify that $Q = Q_{1}\cup Q_{2}$, where 
\begin{align*}
Q_{1}&:= \{(x,y)\in\rect\times\Re\colon \lceil \rhs-\uub \rceil \le \wt^{\top}x \le \lfloor \rhs \rfloor,\, 0 \le y \le \rhs - \wt^{\top}x\}\\
Q_{2}&:=  \{(x,y)\in\rect\times[0,\uub]\colon \wt^{\top}x \le \lfloor \rhs - \uub \rfloor\}.
\end{align*}
Let $\thetaone$ and $\thetatwo$ denote maximal packings for $\wt^{\top}x\le\lfloor\rhs\rfloor$ and $\wt^{\top}x \le \lfloor \rhs - \uub \rfloor$, respectively, and $\thetathr$ denote a minimal packing for $\wt^{\top}x \ge \lceil \rhs-\uub \rceil$. It follows that $\conv{Q_{2}} =\conv{\{x\in\rect\colon x\preceq\thetatwo \}} \times [0,\uub]$ and hence Theorem~\ref{thm:conv} gives us
\begin{equation}\label{convq1}
\conv{Q_{2}} = \{(x,y)\in[\zerovec,\ub]\times [0,\uub]\colon \text{inequalities \eqref{packineq} for $\thetatwo$} \}.
\end{equation}
For the convex hull of $Q_{1}$, observe that $\rhs - \wt^{\top}x \ge 0$ is valid to $\conv{Q_{1}}$ and hence 
\begin{align}
\conv{Q_{1}} &= \left(\conv{\{(x,y)\in\rect\times\Re\colon \thetathr\preceq x \preceq \thetaone \}}\right)\,\cap\,\{(x,y)\colon 0 \le y \le \rhs - \wt^{\top}x \} \notag\\
&= \left\{(x,y)\in[\zerovec,\ub]\times\Re_{+}\colon \text{inequalities \eqref{packineq} for $\thetaone$},\, \text{inequalities \eqref{convtwokp} for $\thetathr$},\, \wt^{\top}x + y \le \rhs  \right\} \label{convq2}
\end{align}
where the second equality is from Theorem~\ref{thm:twokp2}. Since $Q = Q_{1}\cup Q_{2}$, we have $\conv{Q} = \conv{(\conv{Q_{1}} \cup \conv{Q_{2}})}$. Equations~\eqref{convq1} and \eqref{convq2} and disjunctive programming \citep{balas1998disjunctive} imply a compact extended formulation for $\conv{Q}$.
}

\section{Discussion}\label{sec:discuss}\green{
In this paper, we have identified a special class of general integer knapsacks, referred to as superincreasing knapsacks. We studied its greedy solution $\theta$ and showed that this well-structured set is equal to the set of integer vectors that are lexicographically less ($\preceq$-) than $\theta$. The convex hull of this $\preceq$-ordered set is described using $\bigO(n)$ facets, where $n$ is the dimension of the knapsack, and all the nontrivial facets are derived from $\theta$. An arbitrary knapsack is in general a strict subset of solutions that are $\preceq$-than the greedy solution and hence our facet description yields a class of valid inequalities that can be possibly strengthened by other means for use in cutting plane algorithms. A second interesting phenomenon exhibited by the $\preceq$-ordering and superincreasing structure is that the convex hull operator distributes over a finite intersection. Our results generalize previously known descriptions for $\bin$ superincreasing knapsacks.}

\newcommand{\newlex}{\mathcal{C}}
\blue{
\paragraph{Generalized lexicographic cone}
Finally, we mention that the results derived in this paper can be generalized as follows. Given $\beta\in\ints^{n}_{++}$, let $x\preceq_{\beta} \theta$ denote that $x$ is $\beta$-lex smaller than $\theta$, i.e. either $x=\theta$ or the first index $i$ in reverse order is such that $x_{i} \le \theta_{i}-\beta_{i}$. Suppose that we are interested in convexifying $\newlex :=\{x\in\rect\colon x\preceq_{\beta}\theta\}$. It is easy to verify that if $\beta\neq\onevec$, then $\newlex$ cannot be represented as a integer knapsack; we may need a disjunctive formulation to include the correct set of feasible solutions. We believe that by exploiting the properties of lexicographic orderings, all the results proved in this paper carry through with suitable adjustments; for example the function $\phi_{j}(\cdot)$ in equation \eqref{newphi} must be modified to
$\phi_{j}(i) \::=\: \frac{(\ub_{j}-\theta_{j})}{\beta_{j}}\,\prod_{\substack{k=\next{j}\colon\\k\in\I}}^{\prev{i}}\frac{(\ub_{k}+\beta_{k}-\theta_{k})}{\beta_{k}}$ for all $i\in\Ij$.
}

\bibliographystyle{model1-num-names}
\bibliography{knapsack}

\begin{thebibliography}{26}
\expandafter\ifx\csname natexlab\endcsname\relax\def\natexlab#1{#1}\fi
\providecommand{\bibinfo}[2]{#2}
\ifx\xfnm\relax \def\xfnm[#1]{\unskip,\space#1}\fi
\bibitem[{Wolsey(1975)}]{wolsey1975faces}
\bibinfo{author}{L.~Wolsey},
\newblock \bibinfo{title}{Faces for a linear inequality in 0--1 variables},
\newblock \bibinfo{journal}{Mathematical Programming} \bibinfo{volume}{8}
  (\bibinfo{year}{1975}) \bibinfo{pages}{165--178}.
\bibitem[{Padberg(1980)}]{padberg19801}
\bibinfo{author}{M.~Padberg},
\newblock \bibinfo{title}{(1, k)-configurations and facets for packing
  problems},
\newblock \bibinfo{journal}{Mathematical Programming} \bibinfo{volume}{18}
  (\bibinfo{year}{1980}) \bibinfo{pages}{94--99}.
\bibitem[{Weismantel(1997)}]{weismantel19970}
\bibinfo{author}{R.~Weismantel},
\newblock \bibinfo{title}{On the 0/1 knapsack polytope},
\newblock \bibinfo{journal}{Mathematical Programming} \bibinfo{volume}{77}
  (\bibinfo{year}{1997}) \bibinfo{pages}{49--68}.
\bibitem[{Weismantel(1996)}]{weismantel1996hilbert}
\bibinfo{author}{R.~Weismantel},
\newblock \bibinfo{title}{Hilbert bases and the facets of special knapsack
  polytopes},
\newblock \bibinfo{journal}{Mathematics of Operations Research}
  \bibinfo{volume}{21} (\bibinfo{year}{1996}) \bibinfo{pages}{886--904}.
\bibitem[{Marcotte(1985)}]{marcotte1985cutting}
\bibinfo{author}{O.~Marcotte},
\newblock \bibinfo{title}{The cutting stock problem and integer rounding},
\newblock \bibinfo{journal}{Mathematical Programming} \bibinfo{volume}{33}
  (\bibinfo{year}{1985}) \bibinfo{pages}{82--92}.
\bibitem[{Pochet and Wolsey(1995)}]{pochet1995integer}
\bibinfo{author}{Y.~Pochet}, \bibinfo{author}{L.~Wolsey},
\newblock \bibinfo{title}{Integer knapsack and flow covers with divisible
  coefficients: polyhedra, optimization and separation},
\newblock \bibinfo{journal}{Discrete Applied Mathematics} \bibinfo{volume}{59}
  (\bibinfo{year}{1995}) \bibinfo{pages}{57--74}.
\bibitem[{Pochet and Weismantel(1998)}]{pochet1998sequential}
\bibinfo{author}{Y.~Pochet}, \bibinfo{author}{R.~Weismantel},
\newblock \bibinfo{title}{The sequential knapsack polytope},
\newblock \bibinfo{journal}{{SIAM} Journal on Optimization} \bibinfo{volume}{8}
  (\bibinfo{year}{1998}) \bibinfo{pages}{248--264}.
\bibitem[{Cacchiani et~al.(2013)Cacchiani, Caprara, Mar\'{o}ti, and
  Toth}]{Cacchiani201374}
\bibinfo{author}{V.~Cacchiani}, \bibinfo{author}{A.~Caprara},
  \bibinfo{author}{G.~Mar\'{o}ti}, \bibinfo{author}{P.~Toth},
\newblock \bibinfo{title}{On integer polytopes with few nonzero vertices},
\newblock \bibinfo{journal}{Operations Research Letters} \bibinfo{volume}{41}
  (\bibinfo{year}{2013}) \bibinfo{pages}{74 -- 77}.
\bibitem[{Merkle and Hellman(1978)}]{merkle1978hiding}
\bibinfo{author}{R.~Merkle}, \bibinfo{author}{M.~Hellman},
\newblock \bibinfo{title}{Hiding information and signatures in trapdoor
  knapsacks},
\newblock \bibinfo{journal}{IEEE Transactions on Information Theory}
  \bibinfo{volume}{24} (\bibinfo{year}{1978}) \bibinfo{pages}{525--530}.
\bibitem[{Odlyzko(1990)}]{odlyzko1990rise}
\bibinfo{author}{A.~M. Odlyzko},
\newblock \bibinfo{title}{The rise and fall of knapsack cryptosystems},
\newblock \bibinfo{journal}{Cryptology and Computational Number Theory}
  \bibinfo{volume}{42} (\bibinfo{year}{1990}) \bibinfo{pages}{75--88}.
\bibitem[{Shamir(1984)}]{shamir1984polynomial}
\bibinfo{author}{A.~Shamir},
\newblock \bibinfo{title}{A polynomial-time algorithm for breaking the basic
  {Merkle-Hellman} cryptosystem},
\newblock \bibinfo{journal}{IEEE Transactions on Information Theory}
  \bibinfo{volume}{30} (\bibinfo{year}{1984}) \bibinfo{pages}{699--704}.
\bibitem[{Atkinson et~al.(1990)Atkinson, Negro, and Santoro}]{atkinson1990sums}
\bibinfo{author}{M.~Atkinson}, \bibinfo{author}{A.~Negro},
  \bibinfo{author}{N.~Santoro},
\newblock \bibinfo{title}{Sums of lexicographically ordered sets},
\newblock \bibinfo{journal}{Discrete Mathematics} \bibinfo{volume}{80}
  (\bibinfo{year}{1990}) \bibinfo{pages}{115--122}.
\bibitem[{Laurent and Sassano(1992)}]{laurent1992characterization}
\bibinfo{author}{M.~Laurent}, \bibinfo{author}{A.~Sassano},
\newblock \bibinfo{title}{A characterization of knapsacks with the
  max-flow-min-cut property},
\newblock \bibinfo{journal}{Operations Research Letters} \bibinfo{volume}{11}
  (\bibinfo{year}{1992}) \bibinfo{pages}{105--110}.
\bibitem[{Gupte et~al.(2013)Gupte, Ahmed, Cheon, and Dey}]{siampaper}
\bibinfo{author}{A.~Gupte}, \bibinfo{author}{S.~Ahmed},
  \bibinfo{author}{M.~Cheon}, \bibinfo{author}{S.~Dey},
\newblock \bibinfo{title}{Solving mixed integer bilinear problems using {MILP}
  formulations},
\newblock \bibinfo{journal}{{SIAM} Journal on Optimization}
  \bibinfo{volume}{23} (\bibinfo{year}{2013}) \bibinfo{pages}{721--744}.
\bibitem[{Gillmann and Kaibel(2006)}]{gillmann2006revlex}
\bibinfo{author}{R.~Gillmann}, \bibinfo{author}{V.~Kaibel},
\newblock \bibinfo{title}{Revlex-initial 0/1-polytopes},
\newblock \bibinfo{journal}{Journal of Combinatorial Theory, Series A}
  \bibinfo{volume}{113} (\bibinfo{year}{2006}) \bibinfo{pages}{799--821}.
\bibitem[{Martin and Weismantel(1998)}]{martin1998intersection}
\bibinfo{author}{A.~Martin}, \bibinfo{author}{R.~Weismantel},
\newblock \bibinfo{title}{The intersection of knapsack polyhedra and
  extensions},
\newblock in: \bibinfo{editor}{R.~E. Bixby}, \bibinfo{editor}{E.~A. Boyd},
  \bibinfo{editor}{R.~Z. Rios-Mercado} (Eds.), \bibinfo{booktitle}{Integer
  Programming and Combinatorial Optimization}, volume \bibinfo{volume}{1412} of
  \textit{\bibinfo{series}{Lecture Notes in Computer Science}},
  \bibinfo{publisher}{Springer}, \bibinfo{year}{1998}, pp.
  \bibinfo{pages}{243--256}.
\bibitem[{Louveaux and Weismantel(2008)}]{louveaux2008polyhedral}
\bibinfo{author}{Q.~Louveaux}, \bibinfo{author}{R.~Weismantel},
\newblock \bibinfo{title}{Polyhedral properties for the intersection of two
  knapsacks},
\newblock \bibinfo{journal}{Mathematical Programming} \bibinfo{volume}{113}
  (\bibinfo{year}{2008}) \bibinfo{pages}{15--37}.
\bibitem[{Fern{\'a}ndez and J{\o}rnsten(1994)}]{fernandez1994partial}
\bibinfo{author}{E.~Fern{\'a}ndez}, \bibinfo{author}{K.~J{\o}rnsten},
\newblock \bibinfo{title}{Partial cover and complete cover inequalities},
\newblock \bibinfo{journal}{Operations Research Letters} \bibinfo{volume}{15}
  (\bibinfo{year}{1994}) \bibinfo{pages}{19--33}.
\bibitem[{Magazine et~al.(1975)Magazine, Nemhauser, and
  Trotter}]{magazine1975greedy}
\bibinfo{author}{M.~Magazine}, \bibinfo{author}{G.~L. Nemhauser},
  \bibinfo{author}{L.~E. Trotter},
\newblock \bibinfo{title}{When the greedy solution solves a class of knapsack
  problems},
\newblock \bibinfo{journal}{Operations Research} \bibinfo{volume}{23}
  (\bibinfo{year}{1975}) \bibinfo{pages}{207--217}.
\bibitem[{Alfonsin(1998)}]{alfonsin1998variations}
\bibinfo{author}{J.~L.~R. Alfonsin},
\newblock \bibinfo{title}{On variations of the subset sum problem},
\newblock \bibinfo{journal}{Discrete Applied Mathematics} \bibinfo{volume}{81}
  (\bibinfo{year}{1998}) \bibinfo{pages}{1--7}.
\bibitem[{Atamt{\"u}rk(2005)}]{atamturk2005cover}
\bibinfo{author}{A.~Atamt{\"u}rk},
\newblock \bibinfo{title}{Cover and pack inequalities for (mixed) integer
  programming},
\newblock \bibinfo{journal}{Annals of Operations Research}
  \bibinfo{volume}{139} (\bibinfo{year}{2005}) \bibinfo{pages}{21--38}.
\bibitem[{Wolsey(1998)}]{wolsey1998ip}
\bibinfo{author}{L.~Wolsey},
\newblock \bibinfo{title}{Strong valid inequalities},
\newblock in: \bibinfo{booktitle}{Integer Programming}, Discrete Mathematics
  and Optimization, \bibinfo{publisher}{Wiley-Interscience},
  \bibinfo{year}{1998}, pp. \bibinfo{pages}{145--147}.
\bibitem[{Pitteloud(2004)}]{pitteloud2004log}
\bibinfo{author}{P.~Pitteloud},
\newblock \bibinfo{title}{On the log-concavity of sequences arising from
  integer bases},
\newblock \bibinfo{journal}{Discrete Applied Mathematics} \bibinfo{volume}{138}
  (\bibinfo{year}{2004}) \bibinfo{pages}{153--175}.
\bibitem[{Muldoon et~al.(2013)Muldoon, Adams, and Sherali}]{muldoon2012}
\bibinfo{author}{F.~Muldoon}, \bibinfo{author}{W.~Adams},
  \bibinfo{author}{H.~Sherali},
\newblock \bibinfo{title}{Ideal representations of lexicographic orderings and
  base-2 expansions of integer variables},
\newblock \bibinfo{journal}{Operations Research Letters} \bibinfo{volume}{41}
  (\bibinfo{year}{2013}) \bibinfo{pages}{32--39}.
\bibitem[{Christof and L{\"o}bel(1997)}]{christofporta}
\bibinfo{author}{T.~Christof}, \bibinfo{author}{A.~L{\"o}bel},
  \bibinfo{title}{{PORTA: POlyhedron Representation Transformation Algorithm}},
  \bibinfo{howpublished}{Available electronically from
  \url{http://typo.zib.de/opt-long_projects/Software/Porta/}},
  \bibinfo{year}{1997}. \bibinfo{note}{Lastchecked: November 2012}.
\bibitem[{Balas(1998)}]{balas1998disjunctive}
\bibinfo{author}{E.~Balas},
\newblock \bibinfo{title}{{Disjunctive programming: Properties of the convex
  hull of feasible points}},
\newblock \bibinfo{journal}{Discrete Applied Mathematics} \bibinfo{volume}{89}
  (\bibinfo{year}{1998}) \bibinfo{pages}{3--44}.

\end{thebibliography}

\renewcommand{\i}{s}
\appendix
\section{Missing proofs of \textsection\ref{sec:twoside}}\label{app:one}

\begin{proof}[Proof of Lemma~\ref{lem:facetcoeff1}]
For the first part, note that
$
\frac{\phi_{j}(i)}{\phi_{j}(\i)} \;=\;\prod_{\substack{k\in\Ij\\ \i\le k < i}}(\ub_{k}+1-\theta_{k}) \;=\; 1 \,+\, \sum_{\substack{k\in\Ij\\ \i\le k < i}}(\ub_{k}-\theta_{k})\prod_{\substack{t\in\Ij\\ k < t < i}}(\ub_{t}+1-\theta_{t}) \;=\; 1\,+\,\sum_{\substack{k\in\Ij\\ \i\le k < i}}\phi_{k}(i)
$. We prove the second statement by induction on $|\{i\in\Ij\setminus n\colon i\ge\i\}|$. The third statement can be proven similarly via induction on $|\{i\in\Tj\setminus n\colon i \ge \i\}|$. The claim is clearly true when the cardinality is 1. Assume it is true when the cardinality is $m \ge 1$ and let $|\{i\in\Ij\setminus n\colon i\ge\i\}|=m+1$. For convenience, denote $\{i\in\Ij\setminus n\colon i\ge\i\} = \{\i_{1}:=\i,\i_{2},\dots,\i_{m+1}\}$. Then the left hand side in the lemma is
\begin{multline*}\label{simplexpr}
\sum_{i\in\{\i_{1},\dots,\i_{m}\}}\phi_{j}(i)(x_{i}-\delta_{i}-\theta_{i}\x)\;+\;\phi_{j}(\i_{m+1})(x_{\i_{m+1}}-\delta_{\i_{m+1}}-\theta_{\i_{m+1}}\x) \\
\;=\;\phi_{j}(\i)\,\sum_{i\in\{\i_{1},\dots,\i_{m}\}}\left[x_{i}-\delta_{i}-\theta_{i}\x\:+\:\sum_{k\in\Ij[i]\setminus \{n,\i_{m+1}\}}\phi_{i}(k)(x_{k}-\delta_{k}-\theta_{k}\x) \right] \;+\;\phi_{j}(\i_{m+1})(x_{\i_{m+1}}-\delta_{\i_{m+1}}-\theta_{\i_{m+1}}\x)	
\end{multline*}
where the equality is obtained by invoking the induction hypothesis on the first term. Substituting $\phi_{j}(\i_{m+1}) = \phi_{j}(\i)\left[1 + \sum_{t=1}^{m}\phi_{\i_{t}}(\i_{m+1}) \right]$ from the first part into the above equality and combining common terms, we get
\begin{eqnarray*}
\sum_{\substack{i\in\Ij\setminus n\\i\ge\i}}\phi_{j}(i)(x_{i}-\delta_{i}-\theta_{i}\x) \;=\;
\phi_{j}(\i)\,\sum_{i\in\{\i_{1},\dots,\i_{m+1}\}}\left[x_{i}-\delta_{i}-\theta_{i}\x\:+\:\sum_{k\in\Ij[i]\setminus n}\phi_{i}(k)(x_{k}-\delta_{k}-\theta_{k}\x) \right],
\end{eqnarray*}
which is the desired result.
\end{proof}

\begin{proof}[Proof of second part of Lemma~\ref{lem:slice1}]
If $\gamma_{j}=0$, then $\Phi_{j}(\cdot) = 0$ and again the inequality is obvious. For $j > i_{\r}$, we have $\delta_{i}=x_{i} \ \forall i \in j\cup\Tj\setminus n$. Then $x\in\cotwokp \cap \{x\colon x_{n}=\theta_{n}-\epsilon\}$ implies $x_{j} + \sum_{i\in\Tj\setminus n}\Phi_{j}(i)x_{i} \ge \gamma_{j} + \sum_{i\in\Tj}\Phi_{j}(i)\gamma_{i} - \Phi_{j}(n)(\theta_{n}-\epsilon)$ and the right hand side can be simplified to $\epsilon\Phi_{j}(n)$ (analogous to Observation~\ref{obs:phi}) and $\gamma_{n}=\theta_{n}-1$. Now consider $j\le i_{\r}$ and assume that the inequality holds for all $\imath =j+1,\dots,n-1$. Let $\i\in\Tj\setminus n$ be the smallest index such that $\delta_{\i}=\epsilon\ub_{\i}$ and suppose that $\i$ exists. Upon rearranging terms, we have to check that
\[
\min\{\epsilon\ub_{j},x_{j}\} +\sum_{i\in\Tj\colon i<\i}\Phi_{j}(i)x_{i} \;\ge\; \Lambda \;:=\;\epsilon\Phi_{j}(n) - \Phi_{j}(\i)\epsilon\ub_{\i} - \sum_{i\in\Tj\colon \i < i < n}\Phi_{j}(i)\delta_{i}.
\]
It suffices to verify that $\Lambda\le 0$. 
\begin{claim}\label{claim:hj}
For any $j\in N\setminus n$, we have 
$\Phi_{j}(n) = \gamma_{j} + \sum_{i\in\Tj\setminus n}\Phi_{j}(i)\gamma_{i} 
= \Phi_{j}(k)(\gamma_{k}+1) + \sum_{i\in\Tj[k]\setminus n}\Phi_{j}(i)\gamma_{i}$ for all $ k\in\Tj\setminus n$. The first equality is analogous to Observation~\ref{obs:phi}. The second statement follows from a straightforward reverse induction on $k$ and using the fact that $\Phi_{j}(\next{k})=\Phi_{j}(k)(\gamma_{k}+1)$.\qquad\myqed
\end{claim}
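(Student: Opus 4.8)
The plan is to read Claim~\ref{claim:hj} as the exact $\gamma$-mirror of Observation~\ref{obs:phi}: the multiplier $\Phi_{j}(i)=\gamma_{j}\prod_{k\in\Tj\colon k<i}(\gamma_{k}+1)$ is assembled from the factors $(\gamma_{k}+1)$ in the same recursive fashion that $\phi_{j}(i)$ is assembled from $(\ub_{k}+1-\theta_{k})$, with $\gamma_{k}$ playing the role of $\ub_{k}-\theta_{k}$ and the base value $\Phi_{j}(\next{j})=\gamma_{j}$ playing the role of $\phi_{j}(\next{j})=\ub_{j}-\theta_{j}$. Consequently the whole claim should follow by transporting the Observation~\ref{obs:phi} argument, and the only real work is careful index bookkeeping.

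First I would record the one-step recursion. Directly from the product definition, consecutive values (with the successor $\next{k}$ read off within $\T$, not the $\theta$-support $\I$) satisfy $\Phi_{j}(\next{k})=\Phi_{j}(k)(\gamma_{k}+1)$, hence $\Phi_{j}(\next{k})-\Phi_{j}(k)=\Phi_{j}(k)\gamma_{k}$, with $\Phi_{j}(\next{j})=\gamma_{j}$ coming from the empty product. Telescoping this along $\Tj$ — the same induction used for the second half of Observation~\ref{obs:phi} — yields $\Phi_{j}(i)=\gamma_{j}+\sum_{t\in\Tj\colon t<i}\Phi_{j}(t)\gamma_{t}$ for every $i\in\Tj$. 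Evaluating at $i=n$, which lies in $\T$ because $\gamma_{n}=\theta_{n}-1=\ub_{n}-1$, gives the first equality $\Phi_{j}(n)=\gamma_{j}+\sum_{i\in\Tj\setminus n}\Phi_{j}(i)\gamma_{i}$.

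For the second equality I would split the sum in the first equality at the index $k$. Writing $\sum_{i\in\Tj\setminus n}\Phi_{j}(i)\gamma_{i}$ as the terms with $i<k$, the term $i=k$, and the terms $i\in\Tj[k]\setminus n$, the group $i<k$ together with the leading $\gamma_{j}$ collapses to $\Phi_{j}(k)$ by the displayed identity applied at $i=k$; this leaves $\Phi_{j}(n)=\Phi_{j}(k)+\Phi_{j}(k)\gamma_{k}+\sum_{i\in\Tj[k]\setminus n}\Phi_{j}(i)\gamma_{i}=\Phi_{j}(k)(\gamma_{k}+1)+\sum_{i\in\Tj[k]\setminus n}\Phi_{j}(i)\gamma_{i}$, as required. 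Equivalently one can run the reverse induction on $k$ indicated in the statement, whose base case is the largest element of $\Tj\setminus n$: there $\Tj[k]\setminus n=\emptyset$ and $\next{k}=n$, so the asserted right-hand side is simply $\Phi_{j}(k)(\gamma_{k}+1)=\Phi_{j}(\next{k})=\Phi_{j}(n)$, and the inductive step just substitutes $\Phi_{j}(\next{k})=\Phi_{j}(k)(\gamma_{k}+1)$.

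There is no genuine analytic difficulty here; the result is a telescoping identity. The one point demanding care — and the thing I expect to slow down a fully rigorous write-up — is the index arithmetic: correctly excluding $n$ from the sums, reading $\next{\cdot}$ and $\Tj[k]=\T_{k}$ relative to $\T$ rather than $\I$, and confirming $n\in\T$ so that $\Phi_{j}(n)$ is meaningful. Since these are precisely the subtleties already dispatched for $\phi_{j}$, I would lean on the analogy with Observation~\ref{obs:phi} rather than repeat the induction in full.
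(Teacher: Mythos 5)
Your proof is correct and follows essentially the same route as the paper: the first equality is the $\gamma$-analogue of Observation~\ref{obs:phi} obtained by telescoping the recursion $\Phi_{j}(\next{k})=\Phi_{j}(k)(\gamma_{k}+1)$, and the second equality is that same recursion applied via reverse induction (your direct split of the sum at $k$ is just the unrolled form of that induction). The index bookkeeping you flag ($n\in\T$, successors read within $\T$) is handled correctly, so nothing is missing.
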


Using Claim~\ref{claim:hj} with $k=\i$, we rewrite $\Phi_{j}(n)$ to get $\Lambda = \epsilon\Phi_{j}(\i)(\gamma_{\i}+1-\ub_{\i}) + \sum_{i\in\Tj\colon \i < i < n}\Phi_{j}(i)(\epsilon\gamma_{i} - \delta_{i})$. Applying Lemma~\ref{lem:facetcoeff1} with $\x = \epsilon$ gives us
\[
\Lambda \;=\; \epsilon\Phi_{j}(\i)(\gamma_{\i}+1-\ub_{\i}) \:+\: \Phi_{j}(\next{\i})\sum_{\substack{i\in\Tj\\ \i < i < n}}\left[\epsilon\Phi_{i}(n) - \delta_{i} - \sum_{k\in\Tj[i]\setminus n}\Phi_{i}(k)\delta_{k} \right]
\;\le\;\epsilon\Phi_{j}(\i)(\gamma_{\i}+1-\ub_{\i}) \;\le\; 0
\]
where $\next{\i} = \min\{i\colon i\in\Tj\}$, the first inequality is due to each summand being non-positive from induction hypothesis and the second inequality is due to $\i\in\Tj$ and hence $\gamma_{\i}\le\ub_{\i}-1$.

If $\i$ does not exist then $y_{i}=x_{i} \ \forall i \in\Tj\setminus n$ and we must show that $\min\{\epsilon\ub_{j},x_{j} \} + \sum_{i\in\Tj\setminus n}\Phi_{j}(i)x_{i} \ge \epsilon\Phi_{j}(n)$. If $x_{j} \le \epsilon\ub_{j}$, then the same argument as that used for $j > i_{\r}$ proves the desired inequality. Otherwise $\epsilon\ub_{j} < x_{j}$. In this case, we set $\Lambda = \epsilon\Phi_{j}(n) - \sum_{i\in\Tj\setminus n}\Phi_{j}(i)x_{i}$, rewrite $\Phi_{j}(n) = \gamma_{j} + \sum_{i\in\Tj\setminus n}\Phi_{j}(i)\gamma_{i}$ as in Claim~\ref{claim:hj} and follow same steps as before to obtain $\Lambda \le \epsilon\gamma_{j}\le \epsilon\ub_{j}$, as desired.
\end{proof}

\begin{proof}[Proof of Claim~\ref{claim:twokpext}]
We have $\conv{(\onekp\cap\twokp)} =\conv{\left(\conv \{x\in\onekp\colon x_{n}=\theta_{n} \} \cup \conv\{x\in\twokp \colon x_{n}=\theta_{n}-1 \} \right)}$ from Lemma~\ref{lem:twokpunion}. Since $\gamma_{n}=\theta_{n}-1$, we get $\conv{\{x\in\twokp\colon x_{n}=\theta_{n}-1\}} = \cotwokp\cap\{x\colon x_{n}=\gamma_{n}\}$. The fact that  $x_{n}\le\theta_{n}$ defines a face of $\conv{\onekp}$ implies that $\conv{\{x\in\onekp\colon x_{n}=\theta_{n}\}} = \cokp\cap\{x\colon x_{n}=\theta_{n}\}$.
Applying \citeauthor{balas1998disjunctive}' result and invoking Theorem~\ref{thm:conv} and equation~\eqref{convtwokp} gives us the following extended formulation for $\conv{(\onekp\cap\twokp)}$:
\begin{equation*}
\begin{array}{lcll}
\P^{\prime} &=& \Big\{(x,y,\lambda)\in\Re^{n}_{+}\times\Re^{n}_{+}\times [0,1] \colon & y_{n} = \gamma_{n}(1-\lambda),\: \zerovec \le y \le \ub(1-\lambda), \; \zerovec \le x-y \le \ub\lambda, \\
&&& x_{n}-y_{n}=\theta_{n}\lambda \ \;\; x_{i} - y_{i} = 0 \quad i_{\r} < i < n,\\
&&& x_{j}-y_{j}\,+\,\displaystyle\sum_{i\in\Ij\setminus n}\phi_{j}(i)(x_{i}-y_{i}) \:\le\: \g_{j}\lambda, \quad  j \le i_{\r}\colon\theta_{j}\le\ub_{j}-1\\ 
&&& y_{j}\,+\,\displaystyle\sum_{i\in\Tj\setminus n}\Phi_{j}(i)y_{i} \:\ge\:\h_{j}(1-\lambda),  \quad  j\in N\colon \gamma_{j} \ge 1\Big\}. 
\end{array}
\end{equation*}
where $\g_{j}$ and $\h_{j}$ are defined in \eqref{defgh}.
The equality $x_{n}-y_{n}=\theta_{n}\lambda$ implies $\lambda = (x_{n}-y_{n})/\theta_{n}$. Upon substituting for $\lambda$ in $\P^{\prime}$ and rearranging the inequalities, we get the proposed claim.
\end{proof}

\end{document}